\theoremstyle{plain}
\newcommand{\refnewtheoremn}[4]{%
\newaliascnt{#1}{#2}
\newtheorem{#1}[#1]{#3}
\aliascntresetthe{#1}
\expandafter\providecommand\csname #1autorefname\endcsname{#4}}
\newcommand{\refnewtheorem}[3]{\refnewtheoremn{#1}{#2}{#3}{#3}}
\def\makeCal#1{%
\expandafter\newcommand\csname c#1\endcsname{\mathcal{#1}}}
\def\makeBB#1{%
\expandafter\newcommand\csname b#1\endcsname{\mathbb{#1}}}
\def\makeFrak#1{%
\expandafter\newcommand\csname f#1\endcsname{\mathfrak{#1}}}
\edef\y{\@Alph\count@}%
\newtheorem{thm}{Theorem}[section]
\theoremstyle{definition}
\newcommand{\op}[1]{\!\!\mathop{\rm ~#1}\nolimits}
\newcommand{\Coh}{\op{Coh}}
\newcommand{\stackCoh}{\mathcal{C}oh}
\newcommand{\DCoh}{\op{DCoh}}
\newcommand{\Spec}{\op{Spec}}
\newcommand{\iMap}{\mathcal{M}ap}
\newcommand{\ev}{\op{ev}}
\newcommand{\GL}{\op{GL}}
\newcommand{\Hom}{\op{Hom}}
\newcommand{\RHom}{\op{RHom}}
\newcommand{\Deg}{\op{Deg}}
\newcommand{\iDeg}{\mathcal{D}eg}
\newcommand{\pt}{\ast}
\newcommand{\iComp}{\mathcal{C}omp}
\newcommand{\Comp}{\op{Comp}}
\newcommand{\id}{\op{id}}
\newcommand{\Perf}{\op{Perf}}
\newcommand{\dgCat}{\op{dgCat}}
\newcommand{\Db}{\op{D}^b}
\newcommand{\res}{\op{res}}
\newcommand{\APerf}{\op{APerf}}
\newcommand{\QC}{\op{QCoh}}
\newcommand{\sod}[1]{\left\langle #1 \right\rangle}
\newcommand{\ladj}[1]{\beta^{< #1}}
\newcommand{\radj}[1]{\beta^{\geq #1}}
\newcommand{\Tstack}[2][]{\iMap(\Theta^{#1}, #2)}
\newcommand{\dual}{\vee}
\newcommand{\rank}{\op{rank}}
\newcommand{\gr}{\op{gr}}
\renewcommand{\ss}{\rm{ss}}
\newcommand{\us}{\rm{us}}
\def\namedlabel#1#2{\begingroup
    #2%
    \def\@currentlabel{#2}%
    \phantomsection\label{#1}\endgroup
}
\begin{document}

\title{$\Theta$-stratifications, $\Theta$-reductive stacks, and applications}
\author{Daniel Halpern-Leistner}

\begin{abstract}
These are expanded notes on a lecture of the same title at the 2015 AMS summer institute in algebraic geometry. We give an introduction and overview of the ``beyond geometric invariant theory" program for analyzing moduli problems in algebraic geometry. We discuss methods for analyzing stability in general moduli problems, focusing on the moduli of coherent sheaves on a smooth projective scheme as a toy example. We describe several applications: a general structure theorem for the derived category of coherent sheaves on an algebraic stack; some results on the topology of moduli stacks; and a ``virtual non-abelian localization formula" in K-theory. We also propose a generalization of toric geometry to arbitrary compactifications of homogeneous spaces for algebraic groups, and formulate a conjecture on the Hodge theory of algebraic-symplectic stacks.
\end{abstract}

\maketitle

The problem of classification is one of the central meta-problems of mathematics. We present an approach to studying moduli problems in algebraic geometry which is meant as a synthesis of several different lines of research in the subject. Among the theories which fit into our framework: 1) geometric invariant theory, which we regard as the ``classification" of orbits for the action of a reductive group on a projective-over-affine scheme; 2) the moduli theory of objects in an abelian category, such as the moduli of coherent sheaves on a projective variety and examples coming from Bridgeland stability conditions; 3) the moduli of polarized schemes and the theory of $K$-stability.

Ideally a moduli problem, described by an algebraic stack $\cX$, is representable by a quasi-projective scheme. Somewhat less ideally, but more realistically, one might be able to construct a map to a quasi-projective scheme $q : \cX \to X$ realizing $X$ as the good moduli space \cite{alper2013good} of $\cX$. Our focus will be on stacks which are far from admitting a good moduli space, or for which the good moduli space map $q$, if it exists, has very large fibers. The idea is to construct a special kind of stratification of $\cX$, called a \emph{$\Theta$-stratification}, in which the strata themselves have canonical modular interpretations. In practice each of these strata is closer to admitting a good moduli space.

Given an algebraic stack $\cX$, our program for analyzing $\cX$ and ``classifying" points of $\cX$ is the following:
\begin{enumerate}
\item find a $\Theta$-reductive enlargement $\cX \subset \cX'$ of your moduli problem (See \autoref{defn:theta_reductive}), 
\item identify cohomology classes $\ell \in H^2(\cX';\bQ)$ and $b\in H^4(\cX';\bQ)$ for which the theory of $\Theta$-stability defines a $\Theta$-stratification of $\cX'$ (See \S \ref{sect:HN}),
\item prove nice properties about the stratification, such as the boundedness of each stratum.
\end{enumerate}
We spend the first half of this paper (\S 1 \& \S 2) explaining what these terms mean, beginning with a detailed review of the toy model of coherent sheaves on a projective scheme. Along the way we discuss constructions and results which may be of independent interest, such as a proposed generalization of toric geometry which replaces fans in a vector space with certain collections of rational polyhedra in the spherical building of a reductive group $G$ (\S \ref{sect:generalized_toric}).

In second half of this paper we discuss applications of $\Theta$-stratifications. In (\S 3 \& \S 4) we discuss how to use derived categories to categorify Kirwan's surjectivity theorem for cohomology (See \autoref{thm:Kirwan_surjectivity}), and several variations on that theme. Specifically, we discuss how methods of derived algebraic geometry and the theory of $\Theta$-stratifications can be used to establish structure theorems (\autoref{thm:derived_Kirwan},\autoref{thm:quasi_smooth_case})  for derived categories of stacks with a $\Theta$-stratification, and we use this to prove a version of Kirwan surjectivity for Borel-Moore homology (\autoref{cor:BM_surjectivity}). As an application we show (\autoref{thm:poincare}) that the Poincare polynomial for the Borel-Moore homology of the stack of Gieseker semistable sheaves on a K3 surface is independent of the semistability condition (provided it is generic), which leads to \autoref{conj:purity} on the Hodge theory of (0-shifted) symplectic derived Artin stacks. Finally in \S 4 we discuss how the same theory of (derived) $\Theta$-stratifications can be used to establish a ``virtual non-abelian" localization formula in $K$-theory which generalizes other virtual localization formulas for torus actions in $K$-theory and cohomology.

\subsubsection{Background}

We work throughout over the field of complex numbers for simplicity, although many of the results stated hold over a more general base scheme. For us the phrase ``moduli problem" is synonymous with ``algebraic stack" in the sense of Artin, which is a sheaf of groupoids on the big \'{e}tale site of commutative $\bC$-algebras such that the diagonal morphism $\cX \to \cX \times \cX$ is representable by algebraic spaces and there is a smooth surjective morphism from a scheme $X \to \cX$. Most of the time the diagonal will be quasi-affine.

\subsubsection{Author's note} I thank the organizers of the 2015 AMS Summer Institute in Algebraic Geometry for a lively conference and for inviting me to contribute to these proceedings. I thank Daniel Pomerleano and Pablo Solis for many useful comments on the first version of this manuscript. Some of the research described here serves as summary and announcement of work in preparation. The work described here was partially supported by the NSF MSPRF award DMS-1303960.

\section{The non-abelian Harder-Narasimhan problem}

\subsection{Motivational example: the Harder-Narasimhan filtration}

Fix a projective scheme $X$ over $\bC$. Our goal shall be to \emph{classify} all coherent sheaves on $X$ and how they vary in families. The story summarized here serves as the template for the theory of $\Theta$-stratifications, which seeks to extend this picture to more general moduli problems.

Fix an ample invertible sheaf $\cO_X(1)$ of Neron-Severi class $H \in NS(X)_\bR$. Given a coherent sheaf $E$ on $X$, the Grothendieck-Riemann-Roch theorem implies that the Hilbert function this is a polynomial of degree $d = \dim(\op{Supp}(E))$ whose coefficients can be expressed explicitly in terms of the Chern classes of $E$ and the class $H$, $P_E(n) = \chi(X,E\otimes \cO_X(n)) = \sum_i a_i(E) n^i$. For a flat family of coherent sheaves over a scheme $S$, which by definition is a coherent sheaf on $X \times S$ which is flat over $S$, the Hilbert polynomial of the restriction $E_s$ to each fiber is locally constant on $S$. We consider the moduli functor $\stackCoh(X)_P$, which is a contravariant functor from schemes to groupoids defined by
\[
\stackCoh(X)_P (T) = \left\{ F \in \Coh(X \times T) \left| F \text{ is flat over } S, \text{ and } P_{E_s}(t) = P(t), \forall s \in S \right. \right\}
\]
Similarly we let $\stackCoh(X)_{\dim \leq d}$ be the stack of families of coherent sheaves whose support has dimension $\leq d$.

$\stackCoh(X)_P$ is an algebraic stack locally of finite type over $\bC$,\footnote{The script font $\stackCoh(X)$ will denote the stack, and $\Coh(X)$ will denote the abelian category of coherent sheaves on $X$.} and in fact it can locally be described as a quotient of an open subset of a quot-scheme by the action of a general linear group. $\stackCoh(X)_P$ is not representable. In fact it is not even \emph{bounded}, meaning there is no finite type $\bC$-scheme $Y$ with a surjection $Y \to \stackCoh(X)_P$, i.e. there is no finite type $Y$ parameterizing a flat family of coherent sheaves such that every isomorphism class in $\Coh(X)$ appears as some fiber.

\begin{defn} \label{defn:semistability}
We consider the following polynomial invariants of a coherent sheaf $E \in \Coh(X)_{\dim \leq d}$
\[
\op{rk}(E) := P_E = \sum_{i=0}^d a_i(E) n^i, \qquad \op{deg}(E) := \sum_{i=0}^d (d-i) a_i(E) n^i
\]
We also define the \emph{polynomial slope} $\nu(E) := \deg(E) / \op{rk}(E)$, which is a well defined rational function of $n$ of $E \neq 0$ because $\op{rk}(E) \neq 0$. A coherent sheaf $E \in \Coh(X)_{\dim \leq d}$ is $H$-semistable if $\nu(F) \leq \nu(E)$ for all proper subsheaves $F \subset E$, by which we mean $\nu(F)(n) \leq \nu(E)(n)$ for all $n\gg 0$.\footnote{This is a slight reformulation of the polynomial Bridgeland stability condition discussed in \cite{bayer2009polynomial}*{\S 2}, which is itself a reformulation of Rudakov's reformulation \cite{rudakov1997stability} of Simpson/Gieseker stability. This notion of semistability agrees with that of \cite{huybrechts2010geometry}.} $E$ is \emph{unstable} if it is not semistable.
\end{defn}

For a family of coherent sheaves parametrized by $S$, the set of points $s \in S$ for which $E_s$ is semistable is open, hence we can define an open substack $\stackCoh(X)_P^{H-\ss} \subset \stackCoh(X)_P$ parameterizing families of semistable sheaves.

\begin{thm} \cite{huybrechts2010geometry}*{Theorem 4.3.4}
For every integer valued polynomial $P \in \bQ[T]$ of degree $\leq 2$ and every ample class $H \in NS(X)_\bR$, the stack of $H$-semistable coherent sheaves on $X$ admits a projective good moduli space $\stackCoh(X)_P^{H-\ss} \to M(X)_P^{H-\ss}$.
\end{thm}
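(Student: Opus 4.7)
The plan is to follow the classical Simpson/Gieseker strategy, which passes from the moduli stack to a GIT quotient of a Quot scheme. The proof naturally breaks into four steps: boundedness, rigidification via a Quot scheme, identification of GIT-(semi)stability with $H$-semistability, and invocation of Mumford's GIT together with Alper's characterization of good moduli spaces.

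\emph{Step 1 (Boundedness).} The first thing I would prove is that the family of $H$-semistable sheaves on $X$ with Hilbert polynomial $P$ is bounded. This rests on Kleiman's criterion together with Grothendieck's lemma: since for any subsheaf $F \subset E$ one has $\nu(F) \leq \nu(E)$, the coefficients of the Hilbert polynomial of $F$ are controlled by those of $E$ and by the geometry of $(X,H)$. The restriction to $\deg P \leq 2$ keeps this combinatorial bookkeeping manageable. From boundedness I extract a single Castelnuovo--Mumford regularity $n \gg 0$ such that for every $H$-semistable $E$ with Hilbert polynomial $P$, the twist $E(n)$ is globally generated and has vanishing higher cohomology, so $h^0(E(n)) = P(n)$.

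\emph{Step 2 (Rigidification via Quot).} Once such an $n$ is fixed, choosing a basis of $H^0(E(n))$ produces a surjection $\cO_X(-n)^{\oplus P(n)} \twoheadrightarrow E$. Consequently every $H$-semistable sheaf appears as a quotient parametrized by the open subscheme $R$ of $Q := \Quot(\cO_X(-n)^{\oplus P(n)},P)$ on which the natural map $\bC^{P(n)} \to H^0(E(n))$ is an isomorphism. The natural action of $G = \GL_{P(n)}$ on $R$ encodes the choice of basis, and the stack of semistable sheaves is identified with the quotient stack $[R^{H\text{-}\ss}/G]$, where $R^{H\text{-}\ss} \subset R$ is the $G$-invariant open set whose points correspond to $H$-semistable quotient sheaves. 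This step also requires noting that scalars act trivially, so effectively one works with $\mathrm{PGL}_{P(n)}$.

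\emph{Step 3 (Comparison of GIT-stability with $H$-stability).} The Quot scheme $Q$ is projective and carries a natural ample $G$-linearization obtained from the Grothendieck embedding $Q \hookrightarrow \Gr\bigl(\bC^{P(n)} \otimes H^0(\cO_X(m-n)),\, P(m)\bigr)$ for $m \gg n$. The heart of the proof is the Hilbert--Mumford calculation showing that, after taking $m$ sufficiently large, a point of $R$ is GIT-semistable if and only if the corresponding quotient sheaf is $H$-semistable. Concretely, one-parameter subgroups of $G$ correspond to weighted filtrations of $\bC^{P(n)}$, which in turn induce filtrations of the quotient sheaf, and the Hilbert--Mumford weight is computed to be a positive multiple of the comparison between the Hilbert polynomial of the associated graded and that of $E$ for $n \gg 0$. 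I expect this comparison to be the main technical obstacle, since it requires choosing $m$ uniformly for the entire bounded family.

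\emph{Step 4 (Good moduli space).} Having matched GIT and Gieseker semistability, I invoke Mumford's GIT to produce a projective good quotient $R^{H\text{-}\ss} /\!\!/ G =: M(X)_P^{H\text{-}\ss}$. The resulting morphism $\stackCoh(X)_P^{H\text{-}\ss} = [R^{H\text{-}\ss}/G] \to M(X)_P^{H\text{-}\ss}$ is a good moduli space in Alper's sense because $G$ is reductive, $R^{H\text{-}\ss}$ is affine over $\Proj\bigl(\bigoplus_k \Gamma(Q,L^{\otimes k})^G\bigr)$ in neighborhoods of closed orbits, and taking $G$-invariants is exact on quasi-coherent sheaves. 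Projectivity follows from the GIT construction. I would note that this theorem can alternatively be approached intrinsically via the $\Theta$-stratification machinery developed in later sections, where the semistable locus is cut out as the open stratum of a canonical stratification on $\stackCoh(X)_P$, thereby avoiding the explicit choice of $n$ and $m$ above.
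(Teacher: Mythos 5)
The paper does not prove this theorem; it simply cites Huybrechts--Lehn, Theorem 4.3.4, and your four-step sketch is a faithful summary of the Gieseker--Maruyama--Simpson GIT construction given there (boundedness, rigidification via a Quot scheme, matching Hilbert--Mumford weights with Gieseker semistability, and passing through Seshadri's good quotient, which Alper's criterion identifies with a good moduli space). Two small remarks: the hypothesis $\deg P \leq 2$ in the paper's statement is not needed to ``keep bookkeeping manageable'' --- boundedness of the semistable family holds in arbitrary dimension in characteristic zero, and Huybrechts--Lehn state the theorem without this restriction; and in Step 3 the final ``for $n \gg 0$'' should read $m \gg 0$, since $n$ was already fixed by the uniform regularity bound of Step 1. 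Neither affects the correctness of the argument.
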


We regard the scheme $M(X)_P^{H-\ss}$ as a solution of the classification problem for $H$-semistable sheaves: $M(X)_P^{H-\ss}$ does not quite represent the moduli problem, but the fibers of the map $\stackCoh(X)_P^{H-\ss} \to M(X)_P^{H-\ss}$ can be described fairly explicitly as ``$S$-equivalence" classes of semistable sheaves. This is not a complete classification of points of $\stackCoh(X)_P$, however, because we have discarded a huge closed substack of unstable sheaves. For the purposes of this paper, we are interested in the rest of the classification of coherent sheaves on $X$, and the structure of the unstable locus.

\begin{thm} \label{thm:HN}
If $E \in \Coh(X)_{\dim \leq d}$ is $H$-unstable, then there is a \emph{unique} filtration $E_N \subset E_{N-1} \subset \cdots \subset E_0 = E$ called the \emph{Harder-Narasimhan (HN)} filtration such that 
\begin{enumerate}
\item $\gr_i(E_\bullet) := E_i / E_{i+1}$ is semistable for all $i$, and
\item $\nu(\gr_0(E_\bullet)) < \nu(\gr_1(E_\bullet)) < \cdots < \nu(\gr_N(E_\bullet))$ for $n \gg0$.
\end{enumerate}
We refer to the tuple of Hilbert polynomials $\alpha = (P_{\gr^{HN}_0(E)},\ldots,P_{\gr^{HN}_N(E)})$ as the \emph{Harder-Narasimhan (HN) type} of $E$.

The subfunctor of $\cS_\alpha \subset \stackCoh(X)_P$ parametrizing families of unstable sheaves of HN type $\alpha = (P_0,\ldots,P_N)$ is a locally closed algebraic substack, and the assignment $[E] \mapsto (\gr^{HN}_0(E), \ldots,\gr^{HN}_N(E))$ defines an algebraic map
\[
\cS_\alpha \to \stackCoh(X)^{H-\ss}_{P_0} \times \cdots \times \stackCoh(X)^{H-\ss}_{P_N}.\\
\]
\end{thm}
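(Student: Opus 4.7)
The plan is to follow the classical Harder-Narasimhan strategy as in Huybrechts-Lehn, with the polynomial slope $\nu$ in place of a numerical slope, and then upgrade it to a family statement via relative Quot schemes.

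First, for a single unstable sheaf $E$, I would establish existence and uniqueness of a \emph{maximal destabilizing subsheaf} $F\subset E$: a nonzero subsheaf which is itself semistable and whose polynomial slope $\nu(F)$ is maximal among all nonzero $F'\subset E$ (in the sense of the ordering on polynomials used in \autoref{defn:semistability}). Uniqueness follows from the standard seesaw argument: if $F_1, F_2$ are two such candidates, the short exact sequence $0\to F_1\cap F_2 \to F_1\oplus F_2 \to F_1+F_2\to 0$ combined with semistability of the $F_i$ and maximality of $\nu(F_1)=\nu(F_2)$ forces $F_1=F_2$. Existence uses the boundedness of the family of subsheaves whose polynomial slope is bounded below for $n\gg 0$, which is a consequence of Grothendieck's lemma. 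Setting $E_N := F$ and iterating on $E/F$, which has strictly smaller rank, produces the filtration, and the inequality $\nu(\gr_{i-1}(E_\bullet)) < \nu(\gr_i(E_\bullet))$ is automatic from the maximality of $F$ at each stage.

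For the family statement, I would work with the relative Quot scheme $\op{Quot}(\cE, P-P_N) \to S$ parametrizing flat quotients of a universal family $\cE$ over $X \times S$ of Hilbert polynomial $P-P_N$, equivalently flat subsheaves of Hilbert polynomial $P_N$. This is a projective $S$-scheme by Grothendieck, and the locus where the subsheaf is fiberwise semistable is an open subscheme $U$. Intersecting $U$ with the closed condition that the maximal slope of any fiberwise subsheaf is $\nu_N$ (a closed condition on $S$ by upper semicontinuity of $\nu_{\max}$) gives a locally closed subset whose image in $S$ is a locally closed immersion by the fiberwise uniqueness of the maximal destabilizer. Iterating this construction on $\cE/\cF_N$, or equivalently working on the relative flag-Quot scheme parametrizing filtrations with prescribed graded Hilbert polynomials $(P_0,\ldots,P_N)$ and cutting out the open locus where each graded piece is fiberwise semistable, produces a locally closed subfunctor $\cS_\alpha \subset \stackCoh(X)_P$ carrying a universal filtration whose associated gradeds are fiberwise semistable of the prescribed types.

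The algebraic map $\cS_\alpha \to \prod_i \stackCoh(X)^{H-\ss}_{P_i}$ is then tautological: a family $\cE$ over $T\in\cS_\alpha$ carries a universal HN filtration by construction, and sending it to the tuple $(\gr^{HN}_0\cE,\ldots,\gr^{HN}_N\cE)$ defines a morphism of stacks, since flatness of the graded pieces over $T$ is built into the relative Quot construction and semistability of each fiber holds by definition of $\cS_\alpha$. The main obstacle is the upper semicontinuity of the polynomial slope $\nu_{\max}$ of destabilizing subsheaves together with the boundedness of the family of such subsheaves as their slope varies in a compact range; once these are established via Grothendieck's lemma and the projectivity of the relative Quot scheme, the remaining steps track the classical argument in Huybrechts-Lehn \S 1.6--2.3 essentially verbatim, with only cosmetic changes needed to replace the real-valued numerical slope by the polynomial slope.
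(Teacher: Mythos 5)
The paper does not prove \autoref{thm:HN}; it presents it as the classical Harder--Narasimhan/Shatz theorem, implicitly relying on the standard reference (Huybrechts--Lehn, which it cites for the adjacent good-moduli-space theorem and against which it explicitly calibrates its notion of semistability). Your proposal is precisely that classical argument---seesaw uniqueness of the maximal destabilizing subsheaf, boundedness via Grothendieck's lemma, iteration to build the filtration, and the Shatz stratification via relative flag-Quot schemes and semicontinuity of $\nu_{\max}$---so it matches the route the paper intends.

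Two small imprecisions worth fixing. First, the seesaw on $0 \to F_1 \cap F_2 \to F_1 \oplus F_2 \to F_1 + F_2 \to 0$ only yields $\nu(F_1 + F_2) = \nu_{\max}$; it does not by itself force $F_1 = F_2$. One must define the maximal destabilizing subsheaf to be maximal in rank (equivalently, Hilbert polynomial, or with respect to inclusion) among subsheaves attaining $\nu_{\max}$; then $F_1 = F_1 + F_2 = F_2$ follows. Second, in the family statement, ``$\nu_{\max}$ of the fiber equals $\nu_N$'' is a locally closed condition, not a closed one, and passing from the pointwise bijection between the flag-Quot locus and $\cS_\alpha$ to an actual locally closed immersion uses the rigidity and relative uniqueness of the HN filtration (Huybrechts--Lehn, Thm.\ 2.3.2), which your phrase ``by the fiberwise uniqueness of the maximal destabilizer'' gestures at but does not spell out. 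Both points are routine in the reference you are following; the outline is sound.
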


\begin{rem}
Note that if $d' = \dim(\op{Supp}(E))$, then
\[
\nu(E) = d - d' + (d-d'+1) \frac{a_{d'-1}(E)}{a_{d'}(E)} \frac{1}{n} + O(\frac{1}{n^2}) \quad \text{as} \quad n \to \infty,
\]
and in particular $\lim_{n\to \infty} \nu(E) = d - d'$. This implies that a semistable sheaf must be pure (i.e. have no subsheaves supported on a subscheme of lower dimension), and in addition the sheaves $\gr^{HN}_i(E)$ are pure of dimension decreasing in $i$. Furthermore if $E \in \Coh(X)_{\dim \leq d}$ actually has support of dimension $d'<d$, then the slope $\nu_{\leq d'}(E)$ of $E$ regarded as an object of $\Coh(X)_{\dim \leq d'}$ is $\nu_{\leq d'}(E) = \nu_{\leq d}(E) - d+d'$. So the notion of semistability for $E \in \Coh(X)_{\dim \leq d'}$ agrees with that for $E$ regarded as an object of $\Coh(X)_{\dim \leq d}$.
\end{rem}

What this theorem means is that for any family of coherent sheaves $E$ over $S$, there is a finite algebraic stratification $S = \bigcup_\alpha S_\alpha$ by the HN type of the fiber $E_s$. For each stratum $S_\alpha$ the family $E|_{S_\alpha}$ is determined by a map $S_\alpha \to \stackCoh(X)^{H-\ss}_{P_0} \times \cdots \times \stackCoh(X)^{H-\ss}_{P_N}$ classifying the family $\gr^{HN}_\bullet(E_s)$, as well as some linear extension data encoding how $E|_{S_\alpha}$ can be reconstructed from $\gr^{HN}_\bullet(E|_{S_\alpha})$. We refer to the stratification $\stackCoh(X)_P = \stackCoh(X)_P^{H-\ss} \cup \bigcup \cS_\alpha$ as the Harder-Narasimhan-Shatz stratification. We regard \autoref{thm:HN} as a type of classification of coherent sheaves on $X$, as every coherent sheaf has been related in a controlled way to a point on some quasi-projective scheme.

\begin{rem}
Another important property of the Harder-Narasimhan filtration which we attempt to capture below is the fact that there is a total ordering on HN types such that the closure of a stratum $\cS_\alpha$ is contained in $\bigcup_{\beta \geq \alpha} \cS_\beta$.
\end{rem}

\subsubsection{Canonical weights for the Harder-Narasimhan filtration}

A $\bQ$-weighted filtration of $E \in \Coh(X)$ is a filtration $0 \subsetneq E_N \subsetneq \cdots \subsetneq E_0 = E$ along with a choice of rational weights $w_0 < w_1 < \cdots < w_p$. The second property of the HN filtration of an unstable bundle $E$ in \autoref{thm:HN} suggests that we regard the HN filtration as a $\bQ$-weighted filtration by choosing some $n\gg0$ and assigning weight $w_i = \nu(\gr^{HN}_i(E))(n)$. For reasons which will be clear below, we will only be interested in filtrations up to simultaneously rescaling the weights $(E_\bullet,w_\bullet) \mapsto (E_\bullet, k w_\bullet)$ for some $k>0$, so it suffices (by clearing denominators) to consider only $\bZ$-weighted filtrations.

Let $E \in \Coh(X)_{\dim \leq d}$, and define $D = \deg(E) \in \bQ[n]$ and $R = \op{rk}(E) \in \bQ[n]$. For any $\bZ$-weighted filtration $(E_\bullet,w_\bullet)$, define the numerical invariant
\begin{equation}\label{eqn:numerical_invariant}
\mu(E_\bullet,w_\bullet) = \lim_{n \to \infty} \frac{\sum_i \left(\deg(\gr_i(E_\bullet)) R - \op{rk}(\gr_i(E_\bullet)) D \right) w_i}{ \sqrt{\sum w_i^2 \op{rk}(\gr_i(E_\bullet))}} \in \bR \cup \{\pm \infty\}.
\end{equation}
We shall refer to a notion of semistability called \emph{slope semistability}, referred to as $\mu$-stability in \cite{huybrechts2010geometry}, which is coarser than that of \autoref{defn:semistability}.\footnote{It should be possible to modify the theory slightly to obtain the full actual Harder-Narasimhan filtration, but we will not pursue this here.} An analog of \autoref{thm:HN} holds -- the only difference being the stronger requirement that $\nu(\gr_i(E)) < \nu(\gr_{i+1}(E))$ must hold to leading order in $1/n$ -- and the HN filtration with respect to slope stability is obtained by deleting some terms of the HN filtration of \autoref{thm:HN}.

\begin{thm}\cites{halpern2014structure,zamora2014git,gomez2015git} \label{thm:numerical_invariant}
Let $E \in \Coh(X)$ be pure of dimension $d$. Then among all $\bZ$-weighted filtrations $(E_\bullet,w_\bullet)$ of $E$, there is a unique (up to rescaling weights) one which maximizes the numerical invariant $\mu(E_\bullet,w_\bullet)$. The filtration $E_\bullet$ is the Harder-Narasimhan filtration of $E$ with respect to slope semistability, and $w_i \propto a_{d-1}(\gr_i(E_\bullet)) / a_d(\gr_i(E_\bullet))$, which is the leading coefficient in $\nu(\gr_i(E_\bullet))$ as $n \to \infty$
\end{thm}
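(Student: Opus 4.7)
The plan is a two-stage optimization modeled on Kempf's uniqueness theorem for destabilizing 1-parameter subgroups in GIT: first fix the filtration $E_\bullet$ and maximize $\mu$ over the weights, then optimize over filtrations using convexity and a refinement argument.

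First I would compute the $n \to \infty$ asymptotic of $\mu$. Write $r := a_d(E)$, $\bar\nu := a_{d-1}(E)/a_d(E)$, and similarly $r_i, \bar\nu_i$ for $\gr_i(E_\bullet)$. Since $E$ is pure of dimension $d$, each $\op{rk}(\gr_i)$ has degree $d$ in $n$ and each $\deg(\gr_i)$ has degree $d-1$, so the numerator $\sum_i w_i(\deg(\gr_i) R - \op{rk}(\gr_i) D)$ has leading $n^{2d-1}$ coefficient $r \sum_i w_i r_i(\bar\nu_i - \bar\nu)$, while the denominator behaves like $n^{d/2}\sqrt{\sum_i w_i^2 r_i}$. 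Maximizing $\mu$ (interpreted as the leading asymptotic in $n$) therefore reduces to maximizing
\[
\tilde\mu(E_\bullet, w_\bullet) := \frac{\sum_i w_i r_i(\bar\nu_i - \bar\nu)}{\sqrt{\sum_i w_i^2 r_i}}.
\]

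With $E_\bullet$ fixed, I would carry out the weight optimization by Cauchy-Schwarz: setting $u_i := w_i \sqrt{r_i}$ and $a_i := \sqrt{r_i}(\bar\nu_i - \bar\nu)$ identifies $\tilde\mu$ with $\langle u, a\rangle/\|u\|$, so $\tilde\mu \leq \sqrt{\sum_i r_i(\bar\nu_i - \bar\nu)^2}$, with equality iff $w_i$ is a positive multiple of $\bar\nu_i - \bar\nu$ (equivalently, an affine function of $\bar\nu_i$, matching the claim $w_i \propto \bar\nu_i$ since the numerator is invariant under translating $w_\bullet$ by a constant). For these optimal weights to satisfy the strict ordering required of a $\bZ$-weighted filtration, the leading slopes $\bar\nu_i$ must be strictly increasing. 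It remains to optimize $\sum_i r_i(\bar\nu_i - \bar\nu)^2$ over filtrations: if some $\gr_i(E_\bullet)$ fails to be slope-semistable, replace it by its own slope-HN refinement, giving new pieces $H_{i,j}$ with slopes $\bar\nu_{i,j}$ whose $r$-weighted average is $\bar\nu_i$. Strict convexity of $x \mapsto (x-\bar\nu)^2$ combined with Jensen's inequality then yields
\[
r_i(\bar\nu_i - \bar\nu)^2 < \sum_j r_{i,j}(\bar\nu_{i,j} - \bar\nu)^2
\]
unless the $\bar\nu_{i,j}$ all coincide. Iterating forces any maximizer to have every $\gr_i$ slope-semistable, which by the slope-stability analog of \autoref{thm:HN} identifies $E_\bullet$ uniquely with the slope-HN filtration.

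The main technical obstacle is that the Jensen refinement step can conflict with the strict-ordering requirement from the weight step: after refining a non-semistable $\gr_i$, some $\bar\nu_{i,j}$ may drop below the preceding slope $\bar\nu_{i-1}$, so the refined filtration need not have globally increasing slopes. I would decouple the two optimizations by first showing that the HN filtration maximizes $\sum_i r_i(\bar\nu_i - \bar\nu)^2$ over \emph{all} filtrations (not just those with increasing slopes), and then noting that the HN filtration itself automatically satisfies the ordering condition of the weight step. The cleanest formulation is via the HN polygon in the $(a_d, a_{d-1})$-plane: using the classical fact that $(a_d(F), a_{d-1}(F))$ lies weakly below the concave HN polygon for every subsheaf $F \subset E$, one checks that the functional $(r_i, \delta_i) \mapsto \sum_i \delta_i^2/r_i$ on polygonal paths assembled from such points is maximized precisely by the HN polygon itself. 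This is the strategy followed in \cite{halpern2014structure}, \cite{zamora2014git}, and \cite{gomez2015git}, translating Kempf's convexity argument for GIT uniqueness to the moduli of coherent sheaves.
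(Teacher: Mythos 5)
Your proposal takes the concrete, Kempf-style route — asymptotics, Cauchy--Schwarz over weights, Jensen/convexity over filtrations, and the Shatz/HN polygon — which is the approach of the references cited in the theorem (\cite{zamora2014git}, \cite{gomez2015git}), whereas the present paper instead develops the abstract machinery of \autoref{prop:HN} (degeneration spaces, \autoref{lem:convexity}, and the $\Theta$-reductive property) and obtains \autoref{thm:numerical_invariant} as a special case. So the route is genuinely different from the paper's; the trade-off is that the concrete approach gives explicit formulas but requires a case-specific convexity argument, while the paper's abstract approach applies uniformly to any $\Theta$-reductive stack.

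That said, your uniqueness argument has a real gap. You correctly flag the ordering obstacle, but the ``decoupling'' you propose does not fix it: the functional $\sum_i r_i(\bar\nu_i - \bar\nu)^2$ is invariant under permuting the graded pieces of the filtration, so the HN filtration is \emph{not} its unique maximizer. Concretely, take $E = L_1 \oplus L_2$ with $\deg L_1 > \deg L_2$ on a curve: both $L_1 \subset E$ (the HN filtration) and $L_2 \subset E$ have all $\gr_i$ slope-semistable and give the \emph{same} value of $\sum_i r_i(\bar\nu_i - \bar\nu)^2$. Hence the statement ``every $\gr_i$ slope-semistable identifies $E_\bullet$ uniquely with the slope-HN filtration'' is simply false; semistability of the pieces plus the \emph{strictly increasing slope} condition is what characterizes HN. The missing step is to show that for an out-of-order filtration, the Cauchy--Schwarz-optimal weights $w_i \propto \bar\nu_i - \bar\nu$ are decreasing somewhere and hence infeasible, so the \emph{constrained} supremum of $\tilde\mu$ over admissible $w_0 < \cdots < w_N$ is strictly less than $\|a\|$ (in the $L_1 \oplus L_2$ example it is $0$). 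In other words, you cannot fully decouple: the ordering constraint on weights is precisely what breaks the permutation symmetry and singles out HN. Relatedly, your Jensen refinement step needs care when some $\gr_i$ has torsion (lower-dimensional support): then $\bar\nu_i$ is not defined by the ratio you use, and one has to either restrict to saturated filtrations or show a priori that non-saturated filtrations are never optimal.

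A smaller point: your Cauchy--Schwarz computation yields $w_i \propto \bar\nu_i - \bar\nu$, not $w_i \propto \bar\nu_i$. Since the denominator $\sqrt{\sum w_i^2 r_i}$ is \emph{not} invariant under a common additive shift of the $w_i$ (only the numerator is), these are genuinely different weight vectors with different values of $\tilde\mu$, and your version (the one with $\sum_i r_i w_i = 0$) is the actual maximizer. The theorem's ``$w_i \propto a_{d-1}(\gr_i)/a_d(\gr_i)$'' should be read as ``$w_i$ is, up to scale, the centered leading slope,'' and your computation in effect corrects the statement; it would be worth noting this explicitly.
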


What is remarkable about \autoref{thm:numerical_invariant} is that it admits a formulation which makes no reference to the structure of the abelian category $\Coh(X)$ but only to the geometry of the stack $\cX = \stackCoh(X)$. Thus one obtains a framework, the theory of $\Theta$-stability and $\Theta$-stratifications, for generalizing this classification to other examples of moduli problems.

\subsection{Numerical invariants and the non-abelian Harder-Narasimhan problem}
\label{sect:HN}

We now formulate a notion of $\Theta$-stability and non-abelian HN filtrations which generalizes the previous discussion. We focus on a special set of ``test stacks," the most important of which is $\Theta := \bA^1 / \bG_m$, and we develop our theory of stability by considering maps out of $\Theta$.\footnote{Jochen Heinloth has also independently considered the general notion of semi-stability in terms of maps out of $\Theta$ that we present here, and he has introduced a beautiful method for showing that the semi-stable locus is separated in certain situations \cite{heinloth2016hilbert}. We will focus primarily on the \emph{unstable} locus here, although we hope in the future to connect the two stories.}

\begin{ex}
Our motivation for considering maps out of $\Theta$ is that the groupoid of maps $f : \Theta \to \stackCoh(X)_P$ is equivalent to the groupoid whose objects consist of a coherent sheaf $f(1) = [E] \in \stackCoh(X)_P$ along with a $\bZ$-weighted filtration of $E$. Maps $\pt / \bG_m \to \stackCoh(X)_P$ classify $\bZ$-graded coherent sheaves, and the restriction of a map $f : \Theta \to \stackCoh(X)_P$ to $\{0\}/\bG_m$ classifies $\gr_\bullet (E)$.
\end{ex}

Thus for a general algebraic stack $\cX$ we regard a map $f : \Theta \to \cX$ as a ``non-abelian filtration" of the point $f(1) \in \cX$. The groupoid of maps $B\bG_m \to \cX$ consists of points $p \in \cX(\bC)$ along with a homomorphism of algebraic groups $\bG_m \to \op{Aut}(p)$, and isomorphisms are isomorphisms $p_1 \simeq p_2$ so that the induced map $\bG_m \to \op{Aut}(p_1) \to \op{Aut}(p_2)$ is the given homomorphism for $p_2$. Given a non-abelian filtration $f : \Theta \to \cX$ the associated graded is the restriction $f_0 : \{0\}/\bG_m \to \cX$.

\begin{ex} \label{ex:theta_stack_quotients}
If $\cX = X/G$ is a global quotient stack, then the groupoid of maps $f : \Theta \to \cX$ is equivalent to the groupoid whose objects are pairs $(\lambda,x)$ where $\lambda : \bG_m \to G$ is a one-parameter subgroup and $x \in X(\bC)$ is a point such that $\lim_{t \to 0} \lambda(t) \cdot x$ exists. Isomorphisms are generated by conjugation $(\lambda,x) \mapsto (g \lambda g^{-1}, gx)$ for $g \in G$ and $(\lambda,x) \mapsto (\lambda,px)$ for $p \in P_\lambda = \{ g \in G | \lim_t \lambda(t) g \lambda(t^{-1}) \text{ exists} \}$.
\end{ex}

\begin{ex} \label{ex:polarized_schemes}
If $\cX$ is the moduli of flat families of projective schemes along with a relatively ample invertible sheaf, a non-abelian filtration $f : \Theta \to \cX$ classifies a $\bG_m$-equivariant family $X \to \bA^1$. There are the test-configurations studied in the theory of $K$-stability.
\end{ex}

The notion of $\Theta$-stability will depend on a choice of rational cohomology classes in $H^2(\cX;\bQ)$ and $H^4(\cX;\bQ)$, where by cohomology we mean the cohomology of the analytification of $\cX$ \cite{noohi2012homotopy}. Concretely if $\cX = X/G$, then $H^\ast(\cX;\bQ) \simeq H_G^\ast(X;\bQ)$. In particular one can compute $H^\ast(\Theta;\bQ) \simeq \bQ[q]$ with $q \in H^2(\Theta;\bQ)$ the Chern class of the invertible sheaf $\cO_\Theta\langle 1 \rangle$.

\begin{defn} \label{defn:numerical_invariant}
A \emph{numerical invariant} on a stack $\cX$ is a function which assigns a real number $\mu(f)$ to any non-degenerate map $f : \Theta \to \cX$ in the following way: given the data of
\begin{itemize}
\item a cohomology class $\ell \in H^2(\cX;\bQ)$, and
\item a class $b \in H^4(\cX;\bQ)$ which is \emph{positive definite} in the sense that for any $p \in \cX(\bC)$ and non-trivial homomorphism $\bG_m \to \op{Aut}(p)$, corresponding to a map $\lambda : B \bG_m \to \cX$, the class $\lambda^\ast b \in H^4(B\bG_m;\bQ) \simeq \bQ$ is positive,
\end{itemize}
the numerical invariant assigns $\mu(f) = f^\ast \ell / \sqrt{f^\ast b} \in \bR$.
\end{defn}

\begin{rem}
In \cite{halpern2014structure} we study a more general notion of a numerical invariant -- for instance we can allow $\mu$ to take values in a more general totally ordered set $\Gamma$ such as $\bR \cup \{\pm \infty\}$, or we can allow $b$ to be positive semi-definite -- but \autoref{defn:numerical_invariant} suffices for the purpose of exposition.
\end{rem}

\begin{lem}
There are classes $\ell \in H^2(\stackCoh(X)_P)$ and $b \in H^4(\stackCoh(X)_P)$ such that the numerical invariant of \autoref{defn:numerical_invariant} is given by the formula \eqref{eqn:numerical_invariant}.
\end{lem}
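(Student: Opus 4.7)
The plan is to construct $\ell$ and $b$ from characteristic classes of the universal coherent sheaf. Let $\cF$ be the universal sheaf on $X\times\cX$, where $\cX = \stackCoh(X)_P$, and let $\pi\colon X\times\cX\to\cX$ denote the projection. For any $\alpha\in H^*(X;\bQ)$, set
\[
\ell_\alpha := \pi_*\!\bigl(\op{ch}(\cF)\cdot\pi_X^*\alpha\bigr)\in H^*(\cX;\bQ);
\]
both $\ell$ and $b$ will be specific graded components of classes of this form.

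The crux is to compute how $\ell_\alpha$ pulls back under a map $f\colon\Theta\to\cX$ corresponding to a $\bZ$-weighted filtration $(E_\bullet,w_\bullet)$. The inclusion $\{0\}/\bG_m\hookrightarrow\Theta$ is a rational cohomology isomorphism $H^*(\Theta;\bQ)\simeq H^*(B\bG_m;\bQ)=\bQ[q]$, so it suffices to describe $f^*\cF|_{\{0\}/\bG_m}$, which is the associated graded $\bigoplus_i\gr_i(E_\bullet)\otimes\cO(w_i)$ with its natural $\bG_m$-action. Hence $f^*\op{ch}(\cF)=\sum_i\op{ch}(\gr_i(E_\bullet))\cdot e^{w_i q}$, and pushing forward gives
\[
f^*\ell_\alpha \;=\; \sum_i e^{w_i q}\int_X \op{ch}(\gr_i(E_\bullet))\cdot\alpha,
\]
whose $q^k$-coefficient equals $\tfrac{1}{k!}\sum_i w_i^k\int_X \op{ch}(\gr_i)\cdot\alpha$.

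Grothendieck--Riemann--Roch now identifies the integrals: $\op{rk}(F)=P_F(n)=\int_X\op{ch}(F)\,e^{nH}\op{Td}(X)$, and using $\op{deg}(F)=dP_F(n)-nP_F'(n)$ we obtain $\op{deg}(F)=\int_X\op{ch}(F)(d-nH)\,e^{nH}\op{Td}(X)$. Taking $\alpha_1:=\bigl(R(d-nH)-D\bigr)e^{nH}\op{Td}(X)$, with $R,D\in\bQ[n]$ the polynomials fixed by $\stackCoh(X)_P$, the $q$-coefficient of $f^*\ell_{\alpha_1}$ is $\sum_i w_i\bigl(R\op{deg}(\gr_i)-D\op{rk}(\gr_i)\bigr)$, the numerator of \eqref{eqn:numerical_invariant}. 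Similarly, for $\alpha_2:=e^{nH}\op{Td}(X)$, twice the $q^2$-coefficient of $f^*\ell_{\alpha_2}$ equals $\sum_i w_i^2\op{rk}(\gr_i)$. Declaring $\ell$ and $b$ to be the degree-$2$ and degree-$4$ parts of $\ell_{\alpha_1}$ and $2\ell_{\alpha_2}$ respectively then yields the stated formula, and positive-definiteness of $b$ is immediate since the associated sum is a positive-coefficient polynomial in $n$ whenever a nonzero sheaf carries a nontrivial $\bG_m$-action.

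The main subtlety is that $\alpha_1$ and $\alpha_2$ depend on the integer $n$, so the construction really produces families of classes $\ell_n\in H^2(\cX;\bQ)$ and $b_n\in H^4(\cX;\bQ)$, and \eqref{eqn:numerical_invariant} realizes the $n\to\infty$ limit of the ratios $f^*\ell_n/\sqrt{f^*b_n}$. Extracting from this a single pair of classes capturing the leading asymptotic behavior requires selecting the top-order-in-$n$ (equivalently, top-power-of-$H$) components of the integrands $\alpha_1,\alpha_2$, which produce well-defined cohomology classes on $\cX$ whose ratio reproduces the $\bR\cup\{\pm\infty\}$-valued expression in \eqref{eqn:numerical_invariant}; this reduction to fixed classes is the part of the argument that most demands care.
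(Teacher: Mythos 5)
The paper states this lemma without proof, so there is no argument to compare against; I evaluate your proposal on its own terms. The main line of attack is sound and is what one would expect: push forward $\op{ch}(\cF)\cdot\pi_X^\ast\alpha$ from $X\times\cX$ to $\cX$, pull back along $f\colon\Theta\to\cX$, restrict to $\{0\}/\bG_m$, and use Hirzebruch--Riemann--Roch to convert the integrals $\int_X\op{ch}(\gr_i)\cdot\alpha$ into the coefficients $a_k(\gr_i)$ of the Hilbert polynomials. Your computations of $f^\ast\ell_\alpha$, of $\deg(F)=dP_F(n)-nP_F'(n)=\int_X\op{ch}(F)(d-nH)e^{nH}\op{Td}(X)$, and of the $q$- and $q^2$-coefficients are all correct, and you have correctly located the real difficulty in the last paragraph.

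That difficulty is, however, not fully resolved, and two points deserve sharpening. First, once you take "top-order-in-$n$" components to produce honest fixed classes $\ell$ and $b$, the ratio $f^\ast\ell/\sqrt{f^\ast b}$ is a genuine real number, whereas the limit in \eqref{eqn:numerical_invariant} is generically $\pm\infty$ (the numerator has degree $2d-1$ in $n$ while the denominator grows like $n^{d/2}$). So the two expressions cannot be literally equal; what is true is that they differ by an overall positive power of $n$, hence have the same sign, the same zero set, and the same maximizing filtration, which is what the Harder--Narasimhan theory actually uses. This is exactly the slack flagged in the Remark following \autoref{defn:numerical_invariant}, which allows $\Gamma=\bR\cup\{\pm\infty\}$; your write-up should make this identification "up to a positive normalization" explicit rather than asserting that the ratio "reproduces" the limit. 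Second, the claim of positive-\emph{definiteness} is too strong once you pass to the leading $n^d$-coefficient: $f^\ast b=\sum_i w_i^2\,a_d(\gr_i(E_\bullet))$ vanishes whenever every $\gr_i$ with $w_i\neq 0$ is supported in dimension $<d$, so this $b$ is only positive \emph{semi}-definite, again exactly as anticipated by that Remark. Neither point undermines the strategy, but both need to be stated if the lemma is to be proved in the generalized framework it tacitly invokes.
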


Note that the stack $\Theta$ has a ramified covering $z \mapsto z^n$ for every integer $n>0$. This scales $H^2(\Theta)$ by $n$ and scales $H^4(\Theta)$ by $n^2$, so a numerical invariant $\mu(f)$ is invariant under pre-composing $f : \Theta \to \cX$ with a ramified covering of $\Theta$. For instance, pre-composing a map $\Theta \to \stackCoh(X)_P$ with a ramified cover of $\Theta$ amounts to rescaling the weights of the corresponding weighted descending filtration. We can thus formulate

\begin{defn} \label{defn:nonabelian_HN}
Let $\mu$ be a numerical invariant on a stack $\cX$. A point $p \in \cX$ is called \emph{$\mu$-unstable} if there is a map $f : \Theta \to \cX$ with $f(1) \simeq p$ such that $\mu(f)>0$. A \emph{non-abelian Harder-Narasimhan (HN) filtration} of an unstable point $p$ is a map $f : \Theta \to \cX$ along with an isomorphism $f(1) \simeq p$ defined up to pre-composing $f$ with a ramified cover of $\Theta$.
\end{defn}

We refer to the question of existence and uniqueness of a non-abelian HN filtration for any unstable point in $\cX$ as the ``Harder-Narasimhan problem" associated to $\cX$ and $\mu$.

\begin{ex}
When $\cX = X/G$, \autoref{defn:nonabelian_HN} provides in intrinsic formulation of the Hilbert-Mumford criterion for instability in geometric invariant theory by letting $l = c_1(L)$ for some $G$-ample invertible sheaf $L$ and letting $b \in H^4(X/G;\bR)$ come from a positive definite invariant bilinear form $b \in \op{Sym}^2((\mathfrak{t}_\bR^\dual)^W) \simeq H^4(\pt/G;\bR)$.
\end{ex}

\begin{ex}
In the context of \autoref{ex:polarized_schemes}, one can find classes (See \cite{halpern2014structure}) in $H^2$ and $H^4$ such that $\mu$ is the normalized Futaki invariant of \cite{donaldson2005lower}. Thus $\Theta$-stability is a formulation of Donaldson's ``infinite dimensional GIT" in the context of algebraic geometry.
\end{ex}

\section{$\Theta$-reductive stacks}

Here we introduce a certain kind of moduli stack, which we refer to as a $\Theta$-reductive stack. These moduli stacks are natural candidates to admit $\Theta$-stratifications, as we shall discuss. By way of introduction, consider the main examples
\newline
\begin{center}
\begin{tabulary}{.9\textwidth}{C|C}
\textbf{$\Theta$-reductive} & \textbf{Not $\Theta$-reductive} \vspace{2 pt}\\
\hline
\vspace{1 pt} $X / G$, where $G$ is reductive and $X$ is \emph{affine} &  \vspace{1 pt} $X/G$, where $G$ is reductive and $X$ is \emph{projective} \\
 \vspace{1 pt} $\stackCoh (X)$, where $X$ is a projective scheme &  \vspace{1 pt} the stack of vector bundles, or even the stack of torsion free sheaves on a proper scheme $X$ \\
 \vspace{1 pt} The stack classifying objects of some abelian category $\cA$ \footnote{The abelian category $\cA$ must satisfy suitable finiteness conditions in order for the moduli functor of classifying objects in $\cA$ to be an algebraic stack, but examples such as the category of modules over a finite dimensional algebra or the heart of a $t$-structure on $\DCoh(X)$ satisfying the ``generic flatness" condition will lead to algebraic moduli stacks.}  \vspace{1 pt} & \\
\end{tabulary}
\end{center}
Note that in both cases a stack on the right hand side naturally admits an open immersion into a stack of the kind in the left column. If $X$ is a $G$-projective scheme with $G$-linearized very ample bundle $L$, then $X/G$ is an open substack of $\Spec (\bigoplus_{n \geq 0} \Gamma(X,L^n)) / \bG_m \times G$, the quotient of the affine cone over $X$. In fact, a close reading of the original development of geometric invariant theory \cite{mumford1994geometric} reveals that many statements in projective GIT are proved by immediately reducing to a statement on the affine cone. We shall refer to such an open embedding informally as an \emph{enlargement} of a moduli problem.

Recall that for two stacks $\cX,\cY$, one can always define the mapping stack
\[
\iMap(\cY,\cX) : T \mapsto \{ \text{maps } T \times \cY \to \cX\}.
\]
An old meta-theorem that when $\cY$ is proper, this mapping stack is actually algebraic. For instance, one can construct $\iMap(Y,X)$ explicitly using Hilbert schemes when $Y$ is a projective scheme and $X$ is quasi-projective. More generally, results of this kind have been established when $\cY$ is a proper scheme, algebraic space, or Deligne-Mumford stack \cite{olsson2006hom}, and when $\cX$ is a quasi-compact stack with affine diagonal \cite{DAGXIV}. In \cite{halpern2014mapping} we develop with Anatoly Preygel a theory of ``cohomological properness" for algebraic stacks, and we prove an algebraicity result for mapping stacks out of cohomologically proper stacks. In particular $\Theta$ is cohomologically proper, and we have:

\begin{thm}\cite{halpern2014mapping} \label{thm:Tstack}
Let $\cX$ be a locally finite type (derived or classical) algebraic stack with a quasi-affine diagonal. Then $\iMap(\Theta,\cX)$ is a locally finite type algebraic stack, and the evaluation map $\ev_1 : \iMap(\Theta,\cX) \to \cX$ which restricts a map to the open subset $\pt \simeq (\bA^1 -\{0\}) / \bG_m$ is relatively representable by locally finite type algebraic spaces.
\end{thm}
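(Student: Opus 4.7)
The plan is to apply the Artin--Lurie representability criteria: one verifies that the functor $T \mapsto \Map(T \times \Theta, \cX)$ is an fpqc sheaf, is infinitesimally cohesive, is nilcomplete, admits a cotangent complex, is locally almost of finite presentation, and has an algebraic classical truncation. The sheaf property and the cohesion/nilcompleteness conditions follow formally from the corresponding properties of $\cX$, since the operation $T \mapsto T \times \Theta$ preserves the relevant limits of derived affine schemes and pullback along a fixed morphism is exact.

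The technical core, and what I expect to be the main obstacle, is controlling the deformation theory. At a map $f : T \times \Theta \to \cX$, the relative cotangent complex of $\iMap(\Theta,\cX)$ should be
\[
\bL_{\iMap(\Theta,\cX)/\bC}\big|_f \simeq \bigl((\pi_T)_\ast f^\ast \bL_\cX\bigr)[1],
\]
where $\pi_T : T\times \Theta \to T$ is the projection. For this to land in pseudo-coherent complexes on $T$ with good base change, one needs the projection $\pi_T$ to be \emph{cohomologically proper} in the sense of \cite{halpern2014mapping}. This is established there for $\Theta$ using the identification of $\QC(\Theta)$ with $\bZ$-graded $\bC[t]$-modules: coherent objects are finitely generated, with cohomology concentrated in a finite window of graded weights and finite-dimensional in each weight, giving the needed pseudo-coherence and Tor-amplitude bounds on pushforward. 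Combined with almost perfectness of $\bL_\cX$ (which holds since $\cX$ is locally of finite type with quasi-affine diagonal), this delivers the cotangent complex and local finite presentation in a single stroke.

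For algebraicity of the classical truncation, the plan is to work smooth-locally on $\cX$. Any $f : \Theta \to \cX$ maps the ``closed point'' $\{0\}/\bG_m \subset \Theta$ into a neighborhood of the form $[U/\GL_n]$ with $U$ quasi-affine, by local structure results for stacks with quasi-affine diagonal, and the ``attractor'' property forces all of $f(\Theta)$ to land in a such a neighborhood. In such a chart, \autoref{ex:theta_stack_quotients} identifies $\iMap(\Theta,[U/\GL_n])$ with the quotient by $\GL_n$ of the disjoint union, over conjugacy classes of one-parameter subgroups $\lambda : \bG_m \to \GL_n$, of the attractor schemes
\[
U^{\lambda,+} \;=\; \{u \in U : \lim_{t \to 0} \lambda(t)\cdot u \text{ exists}\},
\]
each of which is a closed subscheme of the quasi-affine $U$. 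This presents the truncation as an algebraic stack, locally of finite type.

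Finally, for relative representability of $\ev_1$ by algebraic spaces, it suffices to show that the fibers have trivial $2$-automorphisms. A $2$-automorphism of $f : T\times\Theta \to \cX$ restricting to the identity on $T \times \pt$ is a section of the pulled-back inertia $f^\ast I_\cX \to T\times\Theta$ trivial on the open substack $T \times \pt$. Since $\cX$ has quasi-affine diagonal, $I_\cX \to \cX$ is quasi-affine and hence $f^\ast I_\cX \to T\times\Theta$ is separated; as $T \times \pt \hookrightarrow T \times \Theta$ is schematically dense, two sections of a separated scheme agreeing on it agree globally, so the fiber has no nontrivial automorphisms. In the quotient-stack model, the fiber is then visibly cut out inside $\coprod_\lambda U^{\lambda,+}$, hence is a locally finite type algebraic space.
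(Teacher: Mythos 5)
Your overall strategy---verify the Artin--Lurie criteria, with the technical weight carried by the cohomological properness of $\Theta$---is indeed the approach of \cite{halpern2014mapping}, so the broad shape is right. But two of your central steps have problems worth flagging.

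First, the cotangent-complex formula is wrong. The tangent complex of the mapping stack at $f$ is $(\pi_T)_\ast f^\ast(\bL_\cX^\dual)$, so the cotangent complex is the $\cO_T$-linear double dual $\bigl((\pi_T)_\ast f^\ast(\bL_\cX^\dual)\bigr)^\dual$, exactly as recorded after \autoref{thm:Tstack} in this paper. Your formula $(\pi_T)_\ast f^\ast \bL_\cX[1]$ silently trades the outer dual for a shift, which would only be justified by a Grothendieck--Serre duality $(\pi_T)_\ast(-)^\dual \simeq (\pi_T)_\ast(- \otimes \omega)^\dual$ with trivial dualizing complex---but $\pi_T$ is not proper, so this is precisely what you cannot assume. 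The double dual is not a cosmetic difference: the content of cohomological properness is that $(\pi_T)_\ast$ takes $\APerf$ to $\APerf$ with base change, but you then still need to control the outer dual (e.g.\ working in a pro-coherent or ind-coherent setting, or exploiting boundedness of weights) to get an almost perfect cotangent complex. That is where the real work sits, and your write-up elides it.

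Second, your proof of algebraicity of the classical truncation relies on covering $\cX$ by \emph{open} substacks of the form $[U/\GL_n]$ and then invoking the ``attractor'' argument to say $f(\Theta)$ lands in such an open. Local structure theorems for stacks with quasi-affine diagonal (Alper--Hall--Rydh) produce \emph{\'etale or smooth} neighborhoods $[U/G_x] \to \cX$, not open immersions, and they require linearly reductive stabilizer at the chosen point; here the full stabilizer of $f(0)$ need not be reductive (you are only handed a $\bG_m$ inside it). The attractor argument---that $f(0)\in\cU$ forces $f(\Theta)\subset\cU$---is correct for an open $\cU$ because open sets are stable under generization, but it breaks down for a non-monomorphic \'etale chart, and you would instead have to show that $\iMap(\Theta,\cU)\to\iMap(\Theta,\cX)$ is \'etale surjective, which again requires the deformation theory you are in the middle of setting up. By contrast, \cite{halpern2014mapping} avoids this by proving a general algebraicity theorem for mapping stacks out of cohomologically projective targets and then checking that $\Theta$ qualifies, rather than reducing to quotient charts. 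Your schematic-density argument that $\ev_1$ has trivial relative inertia, and hence is representable by algebraic spaces, is correct.
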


\begin{defn} \label{defn:theta_reductive}
Let $\cX$ be a locally finite type algebraic stack with a quasi-affine diagonal. Then $\cX$ is \emph{$\Theta$-reductive} if for any finite type ring $R$ and any $R$-point of $\cX$, the connected components of the fibers of the map $\ev_1 : \Tstack{\cX} \to \cX$ are proper over $\Spec(R)$.\footnote{We reserve the phrase \emph{weakly $\Theta$-reductive} for when the fibers of $\ev_1$ satisfy the valuative criterion for properness, without necessarily being quasi-compact.}
\end{defn}

\begin{ex}
One can show that $\Tstack{X/G} = \bigsqcup Y_\lambda / P_\lambda$, where the disjoint union is over conjugacy classes of one parameter subgroups $\lambda : \bG_m \to G$ and $Y_\lambda$ is the disjoint union of Bialynicki-Birula strata associated to $\lambda$ (compare \autoref{ex:theta_stack_quotients}). When $X$ is affine and $G$ is reductive, the map $\ev_1$ factors as the closed immersion $Y_\lambda / P_\lambda \hookrightarrow X / P_\lambda$ followed by the proper fibration $X / P_\lambda \to X/G$ with fiber $G/P_\lambda$. Therefore $\ev_1$ is proper on every connected component, so $X/G$ is $\Theta$-reductive in this case.
\end{ex}

\begin{ex}
Let $\Spec(R) \to \cX = \stackCoh(X)_P$ classify a $\Spec(R)$-flat family of coherent sheaves $F$ on $X \times \Spec(R)$. For any $R$-scheme $T$, the $T$-points of the algebraic space $Y = \Tstack{\cX} \times_{\cX} \Spec(R)$ classify $\bZ$-weighted filtrations of the coherent sheaf $F|_{X \times T}$ whose associated graded is flat over $T$. Thus the connected components of $Y$ can be identified with generalized flag schemes of $F$ over $\Spec(R)$, which are proper over $\Spec(R)$. As an exercise, we encourage the reader to use this to verify the claim that the moduli of vector bundles over a smooth curve is not $\Theta$-reductive.
\end{ex}

Given a point $p : \Spec(R) \to \cX$ we regard the connected components of the locally finite type algebraic space $\Tstack{\cX}_p := \Tstack{\cX} \times_{\ev_1,\cX,p} \Spec(R)$ as ``non-abelian flag varieties" for the moduli problem $\cX$, following the previous example.

\begin{ex}
When $\cX$ parametrizes objects in an abelian category $\cA \subset \DCoh(X)$ for a projective scheme $X$, one can show that the locally finite type algebraic spaces $\Tstack{\cX}_p$ satisfy the valuative criterion for properness, generalizing the example of $\stackCoh(X)_P$. This should hold for stacks classifying objects in more general abelian categories as well.
\end{ex}


Given a $\Theta$-reductive moduli problem, we can produce new $\Theta$-reductive moduli problems via the following:
\begin{lem}
If $\cX$ is a $\Theta$-reductive stack and $\cY \to \cX$ be a representable affine morphism, then $\cY$ is a $\Theta$-reductive stack.
\end{lem}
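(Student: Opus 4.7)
The plan is to factor the evaluation map $\ev_1^\cY : \Tstack{\cY} \to \cY$ as a closed immersion into $\Tstack{\cX} \times_\cX \cY$ followed by the projection to $\cY$, so that the $\Theta$-reductivity of $\cX$ transfers to $\cY$ by base change. Functoriality of $\iMap(\Theta,-)$ gives a morphism $\Tstack{\cY} \to \Tstack{\cX}$ via post-composition with $\cY \to \cX$, and compatibility with $\ev_1$ then produces a morphism $\Psi : \Tstack{\cY} \to \Tstack{\cX} \times_\cX \cY$. The main step, and the principal obstacle, is to show that $\Psi$ is a closed immersion.

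To establish this, write $\cY = \Spec_\cX(\cB)$ for a quasi-coherent $\cO_\cX$-algebra $\cB$, using that $\cY \to \cX$ is representable and affine. A $T$-point of $\Tstack{\cX} \times_\cX \cY$ consists of a map $f : T \times \Theta \to \cX$ together with a section $g : T \simeq T \times \pt \to (T \times \Theta) \times_\cX \cY$ over the open point, and its preimage in $\Tstack{\cY}$ parametrizes extensions of $g$ to a global section of the affine morphism $(T \times \Theta) \times_\cX \cY \to T \times \Theta$. Under the equivalence of quasi-coherent sheaves on $\Theta$ with $\bZ$-graded $\bC[t]$-modules, the pullback $f^\ast \cB$ corresponds to a graded $\cO_T[t]$-algebra $A = \bigoplus_{n \in \bZ} A_n$. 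Global sections correspond to graded algebra maps $A \to \cO_T[t]$, while sections over the open point correspond to graded algebra maps $A \to \cO_T[t,t^{-1}]$; since $\cO_T[t] \hookrightarrow \cO_T[t,t^{-1}]$ is injective, restriction is injective. A graded map $\phi : A \to \cO_T[t,t^{-1}]$ factors through $\cO_T[t]$ precisely when each component $\phi_n : A_n \to \cO_T \cdot t^n$ vanishes for $n < 0$, and this locus is the closed subscheme of $T$ defined by the ideal generated by the images of the $\phi_n$ for $n < 0$. Hence $\Psi$ is a closed immersion.

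To conclude, note that $\Tstack{\cX} \times_\cX \cY \to \cY$ is obtained by base change from $\ev_1^\cX$, so by cancellation of fiber products, for any $R$-point $\Spec(R) \to \cY$ lying over $\Spec(R) \to \cX$, the fiber of $\Tstack{\cX} \times_\cX \cY \to \cY$ is canonically identified with the fiber of $\ev_1^\cX$, whose connected components are proper over $\Spec(R)$ by the $\Theta$-reductivity of $\cX$. The corresponding fiber of $\ev_1^\cY$ is a closed subspace via $\Psi$, and each of its connected components is a closed subspace of some connected component of the ambient fiber, hence proper over $\Spec(R)$. This establishes the $\Theta$-reductivity of $\cY$.
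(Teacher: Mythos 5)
Your proof is correct. The paper states this lemma without a proof, so there is nothing to compare against; the route you take is the expected one. The crux is that $\Psi : \iMap(\Theta,\cY) \to \iMap(\Theta,\cX) \times_{\cX} \cY$ is a \emph{closed immersion} (not merely affine), and you verify this correctly: working affine-locally on the test scheme $T$ and using the graded-module description of $\QC(T\times\Theta)$, a section over the open point is a graded $\cO_T[t]$-algebra map $\phi: A \to \cO_T[t,t^{-1}]$, an extension over all of $T\times\Theta$ is unique when it exists because $\cO_T[t]\hookrightarrow \cO_T[t,t^{-1}]$ is injective, and its existence is the vanishing of the wrong-weight components of $\phi$ --- a condition cut out by an ideal in $\cO_T$ and visibly stable under base change. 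Closedness rather than mere affineness is precisely what your last step requires, since a closed subspace of a proper connected component is again proper over $\Spec(R)$, whereas an affine subspace would not be.
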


\begin{ex}
There are many natural moduli problems which are affine over the stack $\stackCoh(X)$ for a projective scheme $X$. For instance one can consider the moduli stack $\cY$ of flat families of coherent algebras on $X$, along with the map $\cY \to \stackCoh(X)$ which forgets the algebra structure. The fiber over a given $[F] \in \stackCoh(X)$ consists of an element of the vector space $\Hom_X(F \otimes F, F)$, corresponding to the multiplication rule, satisfying a finite set of polynomial equations, corresponding to the associativity and identity axioms. Other examples of stacks which are affine over $\stackCoh(X)$ include the stack of coherent modules over a fixed quasi-coherent sheaf of algebras on $X$. This includes as a special case the stack of (not necessarily semistable) Higgs bundles, which for smooth $X$ can be regarded as the stack of coherent sheaves of modules over the algebra $\op{Sym}_X (TX)$.
\end{ex}


\subsubsection{The main advantage of $\Theta$-reductive stacks}

The primary importance of \autoref{defn:theta_reductive} is that the existence and uniqueness question for non-abelian Harder-Narasimhan filtrations is well behaved for points in a weakly $\Theta$-reductive stack. Below we will introduce a formal notion of numerical invariant $\mu$ on the stack $\cX$, as well as what it means for a numerical invariant to be \emph{bounded}. Before introducing the necessary machinery, however, let us state the main result that we are heading towards:

\begin{prop} \label{prop:HN}
Let $\cX$ be a stack which is (weakly) $\Theta$-reductive, and let $l \in H^2(\cX;\bQ)$ and $b \in H^4(\cX;\bQ)$ with $b$ positive definite. Assume that the numerical invariant $\mu(f) = f^\ast l / f^\ast b$ is bounded (\autoref{defn:bounded}). Then any unstable point $p \in \cX$ has a unique non-abelian Harder-Narasimhan filtration: i.e. there is a map $f : \Theta \to \cX$ with an isomorphism $f(1) \simeq p$ which maximizes $\mu(f)$, and this pair is unique up to ramified coverings $\Theta \to \Theta$.
\end{prop}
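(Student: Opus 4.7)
My plan is to work on the mapping stack $\Tstack{\cX}$, which is algebraic by \autoref{thm:Tstack}. For a $\bC$-point $p:\Spec\bC\to\cX$, the fiber $\Tstack{\cX}_p$ parameterizes non-abelian filtrations of $p$. By (weak) $\Theta$-reductivity, the connected components of $\Tstack{\cX}_p$ satisfy the valuative criterion of properness over $\Spec\bC$ (or are actually proper). The key observation is that the numerical invariant $\mu(f)=f^\ast\ell/\sqrt{f^\ast b}$ depends only on the classes $f^\ast\ell\in H^2(\Theta;\bQ)$ and $f^\ast b\in H^4(\Theta;\bQ)$, which are locally constant in families; hence $\mu$ is a locally constant function on $\Tstack{\cX}_p$. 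Writing $H^\ast(\Theta;\bQ)=\bQ[q]$, any non-degenerate component of $\Tstack{\cX}_p$ has $f^\ast b=c\, q^2$ with $c\in\bZ_{>0}$ by positive-definiteness, and $f^\ast\ell=a\, q$ with $a\in\bZ$. The degree-$n$ ramified cover $\Theta\to\Theta$ induces a self-map of $\Tstack{\cX}_p$ scaling $(a,c)\mapsto(na,n^2c)$, so $\mu$ is invariant and descends to the quotient of $\pi_0(\Tstack{\cX}_p)$ by ramified-cover equivalence.

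\textbf{Existence.} Since $p$ is unstable there is some $f$ with $\mu(f)>0$. Boundedness of $\mu$ ensures that the set of components of $\Tstack{\cX}_p$ whose $\mu$-value exceeds any fixed positive threshold is finite modulo ramified covers, so the supremum of $\mu$ over all non-degenerate $f$ with $f(1)\simeq p$ is achieved on some connected component, and any representative of that component furnishes an existence candidate.

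\textbf{Uniqueness --- the main obstacle.} Suppose $f_1,f_2:\Theta\to\cX$ are two maximizers with $\mu(f_1)=\mu(f_2)=\mu_{\max}>0$ and $f_1(1)\simeq f_2(1)\simeq p$; I want to show $f_1$ and $f_2$ agree up to a ramified cover. The plan is to build a \emph{common refinement}, i.e.\ a map $F:(\bA^2/\bG_m^2)\to\cX$ whose restrictions along the two coordinate copies of $\Theta\hookrightarrow\bA^2/\bG_m^2$ recover $f_1$ and $f_2$ respectively (after possibly precomposing with ramified covers to match weights). The construction proceeds by noting $\iMap(\bA^2/\bG_m^2,\cX)=\iMap(\Theta,\iMap(\Theta,\cX))$: the common refinement amounts to a $\Theta$-filtration of $f_1$ (viewed as a point of $\iMap(\Theta,\cX)$) that restricts on $\bG_m/\bG_m$ to $f_2$; equivalently a map from $(\bA^2-\{0\})/\bG_m^2$ matching $f_1,f_2$ on the two punctured axes, which exists essentially by gluing, together with a fill-in over the origin. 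This fill-in is precisely what $\Theta$-reductivity provides: applied twice, it ensures the relevant evaluation maps on iterated mapping stacks have proper fibers, so the partial map over the punctured plane extends. This step is the main obstacle, and the arguments in \cite{halpern2014structure} are needed to carry it out carefully.

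With $F$ in hand, every one-parameter subgroup $\bG_m\to\bG_m^2$ of positive weights $(a,b)$ gives a $\Theta$-family $F_{(a,b)}$ whose associated cohomology classes depend bilinearly (resp.\ quadratically) on $(a,b)$: writing $L_i=f_i^\ast\ell$ and $B_{ij}$ for the symmetric bilinear form determined by the class $b$ applied to the pair $(f_i,f_j)$, one computes
\[
\mu(F_{(a,b)}) \;=\; \frac{a L_1 + b L_2}{\sqrt{a^2 B_{11} + 2ab B_{12} + b^2 B_{22}}}.
\]
The positive-definiteness of $b$ guarantees that $(B_{ij})$ is a positive semidefinite $2\times 2$ matrix, strictly positive definite unless $f_1$ and $f_2$ are proportional in the appropriate sense (i.e.\ equivalent under ramified covers). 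Maximizing this ratio in $(a,b)$ by the Cauchy--Schwarz inequality, one finds that either $\mu(F_{(a,b)})>\max(\mu(f_1),\mu(f_2))$ for some interior direction --- contradicting maximality of $f_1$ and $f_2$ --- or equality holds throughout, which by the equality case of Cauchy--Schwarz forces $f_1$ and $f_2$ to lie on the same ray and hence be ramified-cover equivalent. This completes uniqueness.
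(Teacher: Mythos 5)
Your overall architecture is the same as the paper's: reduce uniqueness to the construction of a common refinement $F : \bA^2/\bG_m^2 \to \cX$ of the two maximizers (which is exactly what the paper's \autoref{lem:convexity} provides), and then observe that the restriction of $\mu$ to the resulting one-simplex is a linear form divided by the square root of a quadratic form, so that two distinct maxima at the endpoints force a strictly larger value in the interior. Your Cauchy--Schwarz rewriting of that last step is a clean way to express the paper's ``strictly convex upward'' assertion, and the existence argument is essentially the one in the paper (though note \autoref{defn:bounded} asserts finiteness of a covering collection of \emph{simplices}, each of which may carry infinitely many rational points; the paper then invokes continuity of $\mu$ on the compact simplices, which is slightly different from finiteness-of-components modulo ramified covers as you stated it).

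The genuine gap is in the sentence ``The positive-definiteness of $b$ guarantees that $(B_{ij})$ is a positive semidefinite $2\times 2$ matrix, strictly positive definite unless $f_1$ and $f_2$ are proportional.'' Positive-definiteness of $b$ in the sense of \autoref{defn:numerical_invariant} only controls $\lambda^\ast b$ for $\lambda: B\bG_m \to \cX$ with \emph{finite kernel}. Therefore the Gram matrix $(B_{ij})$ is positive definite on all of $\bR^2$ exactly when the homomorphism $\bG_m^2 \to \op{Aut}(F(\mathbf{0}))$ induced by the common refinement $F$ has finite kernel, i.e.\ when $F$ is \emph{non-degenerate}. Nothing in the valuative-criterion construction (which gives existence of $F$) forces non-degeneracy, and on its own the positivity axiom gives only $B_{11}, B_{22} > 0$ from the hypotheses on $f_1, f_2$ --- not positive (semi)definiteness of the $2\times 2$ form. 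The paper flags precisely this point as the non-classical content of the uniqueness proof: it invokes the Tannakian formalism \cite{bhatt2015tannaka} and \autoref{thm:categorical_Kirwan} to show that a degenerate $F$ forces $f_1$ and $f_2$ to be equal or antipodal, and antipodality is ruled out by the sign of $\mu$. Your proposal hides this step inside a parenthetical ``unless $f_1$ and $f_2$ are proportional,'' treating it as if it followed from $b$ alone, when it is a separate and nontrivial input. Without that input, the Cauchy--Schwarz argument cannot close: the equality case could arise with $(B_{ij})$ degenerate for reasons having nothing to do with $f_1$ and $f_2$ being ramified-cover equivalent.
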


When $\cX = X/G$ is a global quotient stack, then boundedness holds automatically for any numerical invariant. So, if $X$ is affine and $G$ is reductive, this recovers Kempf's theorem on the existence of canonical destabilizing one parameter subgroups in GIT \cite{kempf1978instability}.

\subsection{The degeneration space: a generalization of the spherical building of a group}

In order to explain the boundedness hypothesis and the proof of \autoref{prop:HN}, we need the notion of the degeneration space -- a topological space associated to any point $p$ in an algebraic stack $\cX$. Let $\Theta^n = \bA^n / \bG_m^n$ and let $\mathbf{1} = (1,\ldots,1)$ be the generic point and $\mathbf{0} = (0,\ldots,0)$ be the origin. For any stack $\cX$ we call a map $f : \Theta^n \to \cX$ \emph{non-degenerate} if the map on automorphism groups $\bG_m^n \to \op{Aut} f(0)$ has finite kernel. For any $p \in \cX(\bC)$ consider the set\footnote{A priori this is a groupoid, but it turns out it is equivalent to a set.}
\[
\Deg(\cX,p)_n := \left\{ \text{nondegenerate maps } f: \Theta^n \to \cX \text{ with an isomorphism } f(1) \simeq p \right\}.
\]
We identify a full-rank $n \times k$ matrix $[a_{ij}]$ with nonnegative integer coefficients with a non-degenerate map $\phi : \Theta_k \to \Theta_n$ with an isomorphism $\phi(\mathbf{1})\simeq\mathbf{1}$ given in coordinates by $z_i \mapsto z_1^{a_{1i}} \cdots z_n^{a_{ni}}$, and this assignment is in fact a bijection between such matrices and maps of this kind. For any such map $\phi : \Theta^k \to \Theta^n$, pre-composition gives a restriction map $\Deg(\cX,p)_n \to \Deg(\cX,p)_k$, and this data fits together into a combinatorial object referred to as a \emph{formal fan} in \cite{halpern2014structure}. In addition we can associate an injective linear map $\bR_{\geq 0}^k \to \bR_{\geq 0}^n$ to such a $\phi = [a_{ij}]$, and this induces a closed embedding
$$\phi_\ast : \Delta^{k-1} = (\bR_{\geq 0}^k \setminus \{0\}) / \bR_{>0}^\times \hookrightarrow \Delta^{n-1} = (\bR_{\geq 0}^n \setminus \{0\}) / \bR_{>0}^\times$$

\begin{defn} \label{defn:degeneration_space}
We define the \emph{degeneration space} $\iDeg(\cX,p)$ of the point $p \in \fX(\bC)$ to be the union of simplices $\Delta^{n-1}_f$ for all $f \in \Deg(\cX,p)_n$, glued to each other along the closed embeddings $\phi_\ast : \Delta^{k-1}_{\phi^\ast f} \hookrightarrow \Delta^{n-1}_f$ for any $f \in \Deg(\cX,p)_n$ and any map $\phi : \Theta^k \to \Theta^n$ corresponding to a full-rank $n \times k$ matrix with nonnegative integer entries.
\end{defn}

This construction is quite similar to the geometric realization of a semi-simplicial set. In addition to being glued along faces, however, simplices can be glued to each other along any closed linear sub-simplex with rational vertices. This potentially leads to non-Hausdorff topologies, but that does not happen in the simplest examples:

\begin{ex} \label{ex:toric_degeneration}
If $X$ is a toric variety, $N$ is the co-character lattice of the torus $T$ acting on $X$, $\Sigma$ is the fan in $N_\bR$ describing $X$, and $p \in X$ is a generic point, then $\Deg(X/T,p)_n$ is the set of linearly independent $n$-tuples $(v_1,\ldots,v_n) \in N^n$ which are contained in some cone of $\Sigma$. It is a simple exercise to show that one can reconstruct the fan $\Sigma$ from the formal fan $\Deg(X/T,p)_\bullet \subset \Deg(\pt / T, \pt)_\bullet$, where the latter is identified with the set of linearly independent $n$-tuples $(v_1\ldots v_n) \in N^n$. The degeneration space $\iDeg(X/T,p)$ is homeomorphic to $(\op{Supp}(\Sigma) \setminus \{0\}) / \bR_{>0}^\times$, where $\op{Supp}$ denotes the support.
\end{ex}

\begin{ex} \label{ex:deg_space_group}
For a general group, $\Deg(\pt / G,\pt)_n$ classifies group homomorphisms with finite kernel $\phi : \bG_m^n \to G$ up to conjugation by an element of
\[
G_{\phi} = \{ g \in G | \lim_{t \to 0} \phi(t^{a_1},\ldots,t^{a_n}) g \phi(t^{a_1},\ldots,t^{a_n})^{-1} \text{ exists } \forall a_i \geq 0 \}.
\]
The correspondence assigns $\phi : \bG_m^n \to G$ to the composition $f_\phi : \Theta^n \to \pt / \bG_m^n \to \pt / G$. This in turn corresponds to a rational simplex $\Delta_\phi \to \iDeg(\pt / G,\pt)$, which is a priori just a continuous map, but one can show that it is a closed embedding in this case. When $G$ is semisimple, we can use simplices of this form to construct a homeomorphism $\iDeg(BG,\pt) \simeq \op{Sph}(G)$ \cite{halpern2014structure}*{Proposition 2.22} with the spherical building of $G$, i.e. the simplicial complex associated to the partially ordered set of parabolic subgroups of $G$ ordered under inclusion.
\end{ex}

%

\begin{ex} \label{ex:deg_space_general}
More generally, if $X$ is a scheme with a $G$ action and $p \in X$, then the canonical map $\iDeg(X/G,p) \to \iDeg(\pt/G,\pt)$ is a closed embedding. More explicitly, for any homomorphism with finite kernel $\phi : \bG_m^n \to G$ we associate a rational simplex $\Delta_\phi \hookrightarrow \iDeg(\pt / G,\pt)$. Let $\bG_m^n$ act diagonally on $\bA^n \times X$ via the homomorphism $\phi$ and the $G$ action on $X$, and let $X_{\phi,p}$ be the normalization of the $\bG_m^n$ orbit closure of $(1,\ldots,1,p) \in \bA^n \times X$. Then $X_{\phi,p}$ is a normal toric variety with a toric map to $\bA^n$, so the support of the fan $\Sigma_{X_{\phi,p}}$ of $X_{\phi,p}$ is canonically a union of rational polyhedral cones in $(\bR_{\geq 0})^n$. The closed subspace $\iDeg(X/G,p) \hookrightarrow \iDeg(\pt/G,\pt)$ is determined uniquely by the fact that
\[
\iDeg(X/G,p) \cap \Delta_{\phi} = (\op{Supp}(\Sigma_{X_{\phi,p}}) - \{0\}) / \bR_{>0}^\times \subset \Delta_{\phi}
\]
for each rational simplex $\Delta_\phi \subset \iDeg(\pt/G,\pt)$.
\end{ex}

\subsubsection{The space of components} Instead of considering the fiber of $\ev_1 : \Tstack[n]{\cX} \to \cX$ over a point as a set, for any map $\varphi : T \to \cX$ we can consider the fiber product $\Tstack[n]{\cX} \times_{\cX} T$ as an algebraic space over $T$.
\begin{defn}
Given $\varphi : T \to \cX$, we define the set $\Comp(\cX,\varphi)_n \subset \pi_0(\Tstack[n]{\cX} \times_{\cX} T)$ to consist of those connected components which contain at least one non-degenerate point. The \emph{component space} $\iComp(\cX,\varphi)$ is the union of standard $(n-1)$-simplices $\Delta_{[f]}^{n-1}$, one copy for each $[f] \in \Comp(\cX,\varphi)_n$, glued in the same manor as in the construction of $\iDeg(\cX,p)$ in \autoref{defn:degeneration_space}
\end{defn}

Any non-degenerate map $f : \Theta^n \to \cX$ with an isomorphism $f(1)\simeq p$ also defines an element $[f] \in \Comp(\cX,\id_{\cX})_n$, and the identity maps $\Delta_f^{n-1} \subset \iDeg(\cX,p) \to \Delta_{[f]}^{n-1} \to \iComp(\cX,\id_{\cX})$ glue for different $f \in \iDeg(\cX,p)_n$ to give a continuous map $\iDeg(\cX,p) \to \iComp(\cX,\id_{\cX})$. However, the component space tends to be much smaller that the degeneration space of any particular point.

\begin{ex} \label{ex:component_space_quotient}
When $\cX = X/G$ is a quotient of a $G$-quasi-projective scheme, then for a maximal torus $T \subset G$, the map $\Comp(\cX,\id_{\cX})_n \to \Comp(X/T,\id_{X/T})_n$ is surjective for all $n$, hence $\iComp(\cX,\id_{\cX}) \to \iComp(X/T,\id_{X/T})$ is surjective. One can show by explicit construction that $\iComp(X/T,\id_{X/T})$ can be covered by finitely many simplices, hence so can $\iComp(\cX,\id_{\cX})$.
\end{ex}

\subsection{A generalization of toric geometry?}
\label{sect:generalized_toric}

For a quotient stack $\cX = X/T$, where $X$ is a toric variety with generic point $p \in X$, we have seen in \autoref{ex:toric_degeneration} that $\Deg(\cX,p)_\bullet$ remembers the information of the fan in $N_\bR$ defining $X$, and hence remembers enough information to reconstruct $X$ itself. For a reductive group $G$, define a \emph{convex rational polytope} $\sigma \subset \iDeg(\pt / G, \pt)$ to be a closed subset such that for any homomorphism with finite kernel $\phi : \bG_m^n \to G$, the intersection of $\sigma$ with the rational simplex $\Delta_{\phi}$ as discussed in \autoref{ex:deg_space_group} is a convex polytope with rational vertices.

Now let $X$ be a normal projective-over-affine scheme with an action of a reductive group $G$ and a $p \in X$ such that $G \cdot p \subset X$ is dense. We have seen (\autoref{ex:deg_space_general}) that $\iDeg(X/G,p) \subset \iDeg(\pt/G,\pt)$ is a closed subspace whose intersection with any rational simplex $\Delta_\phi \subset \iDeg(\pt/G,\pt)$ is $(\op{Supp}(\Sigma_{X_{\phi,p}}) - \{0\}) / \bR_{>0}^\times$ for some toric variety $X_{\phi,p}$ with a toric map to $\bA^n$. The fan of $X_{\phi,p}$ therefore specifies a decomposition of $\Delta_\phi \cap \iDeg(X/G,p)$ into a union of rational polytopes. One can define a collection of convex rational polytopes $\Sigma_X = \{ \sigma \subset \iDeg(\pt/G,\pt) \}$ characterized by the properties that
\begin{enumerate}
\item the intersection of any two polytopes in $\Sigma_X$ is another polytope in $\Sigma_X$,
\item $\bigcup_{\sigma \in \Sigma_X} \sigma = \iDeg(X/G,p) \subset \iDeg(\pt/G,\pt)$, and
\item the intersections $\sigma \cap \Delta_\phi$ for $\sigma \in \Sigma_X$ are the polytopes in $\Delta_\phi \cap \iDeg(X/G,p)$ induced by the fan of $X_{\phi,p}$.
\end{enumerate}
\begin{quest}
To what extent does the resulting collection $\Sigma_X$ of rational polytopes in $\iDeg(\pt/G,\pt)$ remember the geometry of $X$?\footnote{It can not be that $\Sigma_X$ uniquely determines $X$: if $U \subset G$ is any unipotent subgroup group, then for $X = G/U$ one can check that $\iDeg(X/G,p) \simeq \iDeg(\pt / U,\pt) = \emptyset$ and hence $\Sigma_X = \emptyset$.}
\end{quest}

Let us formulate a more concrete question, assuming in addition that $X$ is smooth: The stabilizer group $\op{Stab}(p)$ acts on the space $\iDeg(X/G,p)$. We use the notation $C(Y)$ to denote the cone over a topological space $Y$, and observe that for a rational polytope $\sigma \subset \Delta_\phi$, the cone $C(\sigma)$ is canonically a rational polyhedral cone in $(\bR_{\geq 0})^n$, so we can consider polynomial functions on $C(\sigma)$. We define the ring of invariant piecewise polynomial functions on $\Sigma_X$,
\[
PP(\Sigma_X)^{\op{Stab}(p)} := \left\{ \begin{array}{c} \op{Stab}(p)\text{-invariant continuous } \\ f : C(\iDeg(X/G,p)) \to \bR \end{array} \left| \begin{array}{c} \forall \phi : \bG_m^n \to G, \forall \sigma \in \Sigma_X, \\ f|_{C(\Delta_\phi \cap \sigma)} \text{ is a polynomial} \end{array} \right. \right\} 
\]
In \cite{halpern2014structure}*{Lemma 2.27} we construct a homomorphism $H^{even}_G(X) \to PP(\Sigma_X)^{\op{Stab}(p)}$, where $H^{2n}$ maps to functions which are locally homogeneous polynomials of degree $n$.
\begin{quest}
Under what conditions is the map $H^{even}_G(X) \to PP(\Sigma_X)^{\op{Stab}(p)}$ an equivalence?
\end{quest}

For smooth toric varieties, this is known to be an equivalence, and it remains an equivalence for all toric varieties after replacing singular cohomology with a suitable alternative cohomology theory \cite{payne2006equivariant}. We have also verified that this map is an equivalence when $X = \pt/G$ itself, and when $X = G/P$ is a generalized flag manifold.

\subsection{The proof of \autoref{prop:HN}}

We can now finish explaining the terminology of \autoref{prop:HN} and its proof.

\begin{lem} \cite{halpern2014structure}*{Lemma 2.27}
Given a numerical invariant, the function $\mu = f^\ast \ell / \sqrt{f^\ast b}$ extends to a continuous function on $\iComp(\cX,\id_\cX)$ and thus a continuous function on $\iDeg(\cX,p)$ via the continuous map $\iDeg(\cX,p) \to \iComp(\cX,\id_{\cX})$ for any $p \in \cX$.
\end{lem}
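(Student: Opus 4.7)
The plan is to work simplex-by-simplex on $\iComp(\cX,\id_\cX)$. For a non-degenerate $f : \Theta^n \to \cX$ representing $[f] \in \Comp(\cX,\id_\cX)_n$, first compute the pullbacks via the explicit cohomology ring $H^\ast(\Theta^n;\bQ) \simeq \bQ[q_1,\ldots,q_n]$ (with $q_i \in H^2$ the Chern class of the $i$-th tautological line bundle): write $f^\ast \ell = \sum_i c_i q_i$ and $f^\ast b = \sum_{ij} b_{ij} q_i q_j$ with $c_i, b_{ij} \in \bQ$. An integer ray $a = (a_1,\ldots,a_n) \in \bZ_{\geq 0}^n \setminus \{0\}$ gives a map $\phi_a : \Theta \to \Theta^n$ with $\phi_a^\ast q_i = a_i q$, and composition yields
\[
\mu(f \circ \phi_a) = L(a)/\sqrt{Q(a)}, \qquad L(a) := \textstyle\sum_i c_i a_i, \quad Q(a) := \textstyle\sum_{ij} b_{ij} a_i a_j.
\]
This formula manifestly extends to a continuous real-valued function on the simplex $\Delta^{n-1}_{[f]}$, viewed as the space of rays in $\bR_{\geq 0}^n \setminus \{0\}$, provided $Q$ is strictly positive there.

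The key technical step is verifying this positivity. For any non-zero rational $a$, restricting $\phi_a$ to $\{0\}/\bG_m$ yields a homomorphism $\bG_m \to \bG_m^n \to \op{Aut} f(0)$; non-degeneracy of $f$ makes the outer arrow have finite kernel, so the composition is non-trivial, and the positive-definiteness hypothesis on $b$ forces $Q(a) > 0$. By density and continuity, $Q \geq 0$ on all of $\bR_{\geq 0}^n$. To upgrade to strict positivity, suppose $Q(a_0) = 0$ for some non-zero $a_0 \in \bR_{\geq 0}^n$, and let $F$ be the face of the orthant whose relative interior contains $a_0$. Then $Q|_F \geq 0$ attains its minimum at an interior point of $F$, so $a_0$ lies in the kernel of the symmetric bilinear form underlying $Q|_F$. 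Since $Q|_F$ has rational coefficients, this kernel is a rational linear subspace of the span of $F$, and its intersection with $F$ is a rational polyhedral cone containing $a_0$; such a cone must contain a non-zero rational vector $b_0$, giving $Q(b_0) = 0$ and contradicting positivity at rational points.

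It remains to verify gluing compatibility and descent. For a full-rank non-negative integer matrix defining a face map $\phi : \Theta^k \to \Theta^n$, $\phi^\ast$ acts on the $q$'s by the linear substitution dual to the embedding $\bR_{\geq 0}^k \hookrightarrow \bR_{\geq 0}^n$ used to glue $\Delta^{k-1}$ into $\Delta^{n-1}$, so the local formulas $L(a)/\sqrt{Q(a)}$ transform consistently across faces. Moreover, the assignment $f \mapsto (f^\ast \ell, f^\ast b)$ is locally constant on $\iMap(\Theta^n,\cX)$, since pullback of cohomology classes is a homotopy invariant, so the formula depends only on $[f] \in \Comp(\cX,\id_\cX)_n$ and glues to a continuous function on $\iComp(\cX,\id_\cX)$; the extension to $\iDeg(\cX,p)$ is then its pullback along the canonical continuous map to $\iComp(\cX,\id_\cX)$. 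I expect the main obstacle to be the positivity argument in the second paragraph: naive continuity gives only $Q \geq 0$, and upgrading to strict positivity at irrational rays genuinely requires the rationality of $Q$ combined with the fact that rational linear subspaces meet $\bR_{\geq 0}^n$ in rational polyhedral cones.
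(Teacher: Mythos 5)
Your proof follows the same approach as the paper's sketch: for each rational simplex, identify $f^\ast\ell$ and $f^\ast b$ with a linear form $L$ and a quadratic form $Q$ on $\bR_{\geq 0}^n$, observe that $\mu = L/\sqrt{Q}$ descends to the simplex of rays, and check the formulas glue consistently under the full-rank nonnegative integer matrix maps. The one place you go beyond the paper's ``proof idea'' is the middle paragraph: you correctly note that the positive-definiteness hypothesis on $b$ only directly yields $Q(a)>0$ at nonzero rational $a\in\bR_{\geq 0}^n$ (via non-degenerate maps $\Theta\to\Theta^n\to\cX$), and upgrading this to $Q>0$ on all of $\bR_{\geq 0}^n\setminus\{0\}$ genuinely uses the rationality of $Q$ --- your argument that a boundary zero in the relative interior of a face $F$ would lie in the rational kernel of $B|_{\operatorname{span}(F)}$, whose intersection with $F$ is a rational polyhedral cone and hence contains a rational zero, is a correct way to close that gap, which the paper glosses over by simply asserting positive-definiteness.
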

\begin{proof}[Proof idea]
Any rational simplex $\Delta_{[f]}^{n-1}$ corresponds to \emph{some} non-degenerate map $f : \Theta^n \to \cX$. The classes $f^{\ast} \ell \in H^\ast(\Theta^n;\bQ)$ and $f^\ast b \in H^4(\Theta^n;\bQ)$ can be identified with a linear and positive definite quadratic form on $(\bR_{\geq 0})^n$ respectively.  The function $\mu$ restricted to $\Delta_{[f]}^{n-1}$ is simply the quotient $f^\ast \ell / \sqrt{f^\ast b}$, which descends to a continuous function on $(\bR^n_{\geq 0} - \{0\}) / \bR_{>0}^\times$.
\end{proof}


The proof of existence is essentially independent of the uniqueness proof. In light of the previous lemma, the existence of a maximizer for $\mu$ is an immediate consequence of the following property, which holds for \emph{any} numerical invariant on a global quotient stack by \autoref{ex:component_space_quotient}:

\begin{defn} \label{defn:bounded}
Let $\mu$ be a numerical invariant on $\cX$. We say that $\mu$ is \emph{bounded} if for all $p \in \cX(\bC)$ there is a finite collection of rational simplices in $\sigma_1,\ldots,\sigma_N \subset \iComp(\cX,p)$ such that for any point $x \in \iComp(\cX,p)$ there is a point $x' \in \sigma_i$ such that $\mu(x') \geq \mu(x)$.
\end{defn}

The uniqueness part of \autoref{prop:HN}, in contrast, uses the fact that $\cX$ is weakly $\Theta$-reductive in an essential way. Given an algebraic stack $\cX$ and $p \in \cX(\bC)$, we say that two rational points in $\iDeg(\cX,p)$ corresponding to $f,g \in \Deg(\cX,p)_1$ are \emph{antipodal} if there is a group homomorphism $\bG_m \to \op{Aut}(p)$ corresponding to a map $\lambda : B \bG_m \to \cX$, such that both $f,g : \Theta \to \cX$ can be factored as $f \simeq \lambda \circ \pi$ and $g \simeq \lambda^{-1} \circ \pi$, where $\pi : \Theta \to B\bG_m$ is the canonical projection.

\begin{lem} \label{lem:convexity}
Let $\cX$ be a (weakly) $\Theta$-reductive stack, and let $f,g \in \iDeg(\cX,p)$ be two distinct rational points which are not antipodal. Then there is a unique rational ray in $\iDeg(\cX,p)$ connecting $f$ and $g$.
\end{lem}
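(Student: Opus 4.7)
The task is to construct a non-degenerate $h:\Theta^2\to\cX$ with $h(1,1)\simeq p$, $h|_{\Theta\times\{1\}}\simeq g$, and $h|_{\{1\}\times\Theta}\simeq f$ (thereby producing a rational $1$-simplex $\Delta^1_h\subset\iDeg(\cX,p)$ with vertices $g$ and $f$), and to show that $h$ is unique up to the equivalence defining $\iDeg$. My plan is to recast the existence question as a lifting problem along $\ev_1:\iMap(\Theta,\cX)\to\cX$ and solve it using $\Theta$-reductivity applied to the $\bA^1$-family obtained from $g$.

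First I would use the adjunction $\iMap(\Theta^2,\cX)\simeq\iMap(\Theta,\iMap(\Theta,\cX))$ to reformulate: specifying $h$ is the same as specifying $\tilde h:\Theta\to\iMap(\Theta,\cX)$ with $\ev_1\circ\tilde h\simeq g$ and $\tilde h(1)\simeq[f]$. I would then form the algebraic space $Y:=\iMap(\Theta,\cX)\times_{\ev_1,\cX,g}\Theta$, which is locally finite type and relatively representable over $\Theta$ by \autoref{thm:Tstack}, so that $\tilde h$ becomes a section of $Y\to\Theta$ through the $\bC$-point $[f]\in Y|_1$. Pulling back along the cover $\bA^1\to\Theta=[\bA^1/\bG_m]$ gives $Y':=\iMap(\Theta,\cX)\times_\cX\bA^1$ over $\bA^1$; since the induced map $g|_{\bA^1}:\bA^1\to\cX$ is a $\Spec\bC[t]$-point of $\cX$, weak $\Theta$-reductivity will yield the valuative criterion for properness on the connected components of $Y'\to\bA^1$. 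The $\bG_m$-descent underlying the cover endows $Y'$ with a $\bG_m$-action compatible with scaling on $\bA^1$, and the orbit $\bG_m\cdot[f]\subset Y'$ is then a $\bG_m$-equivariant section over $\bG_m\subset\bA^1$ passing through $[f]$ at $1$.

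Letting $Y'_0\subset Y'$ be the connected component containing this orbit, I would produce the section of $Y'_0\to\bA^1$ as follows: in the strongly $\Theta$-reductive case, where $Y'_0\to\bA^1$ is proper, take the scheme-theoretic closure $\overline{\bG_m\cdot[f]}\subset Y'_0$, which will be proper and birational over $\bA^1$ and hence, by normality of $\bA^1$, isomorphic to $\bA^1$; in the weakly case, the valuative criterion at the DVR $\bC[t]_{(t)}$ extends the orbit-section uniquely across $0\in\bA^1$, and uniqueness in the valuative criterion forces $\bG_m$-equivariance and compatibility with the orbit, yielding the same global section $\bA^1\to Y'_0$. Descending along $\bA^1\to\Theta$ then gives the section $\Theta\to Y$, which unwinds via adjunction to the desired $h:\Theta^2\to\cX$.

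Non-degeneracy and uniqueness go as follows. If the constructed $h$ were degenerate, the induced $\Phi:\bG_m^2\to\op{Aut}(h(0,0))$ would possess a non-trivial kernel element $(a,b)\in\bZ^2\setminus\{0\}$, so the coordinate one-parameter subgroups $\lambda_i:=\Phi|_{i\text{-th factor}}$ would satisfy $\lambda_1^a\lambda_2^b=1$; same-sign $(a,b)$ then gives $\lambda_1$ proportional to $\lambda_2^{-1}$, forcing $f$ and $g$ to factor through a common $\lambda:B\bG_m\to\cX$ via characters of opposite sign (precisely antipodality), while opposite-sign $(a,b)$ gives $\lambda_1\propto\lambda_2$ and hence $f\simeq g$ in $\iDeg(\cX,p)$ — both contradicting hypotheses. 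For uniqueness, any two candidate lifts would give two sections of $Y'_0\to\bA^1$ through $[f]$, both restricting over $\bG_m\subset\bA^1$ to the unique $\bG_m$-equivariant orbit section, whence separatedness from the valuative criterion forces agreement over all of $\bA^1$. The hard part will be the non-degeneracy step: the argument above is transparent for quotient stacks where $\Phi$ is a genuine homomorphism of tori, but for general $\cX$ one must work intrinsically with the restriction $h|_{B\bG_m^2}:B\bG_m^2\to\cX$ and identify the coordinate one-parameter subgroups of $\op{Aut}(h(0,0))$ with those describing the associated gradeds of $g$ and $f$, via the canonical specialization maps along $h|_{\Theta\times\{0\}}$ and $h|_{\{0\}\times\Theta}$.
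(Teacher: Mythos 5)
Your existence and uniqueness argument matches the paper's: you reformulate the rational $1$-simplex as a lift along $\ev_1 : \Tstack{\cX} \to \cX$ and produce the lift by the valuative criterion (weak $\Theta$-reductivity), passing through the cover $\bA^1 \to \Theta$ which is just an explicit rendering of the paper's observation that $\Theta \setminus \{0\} \hookrightarrow \Theta$ is the complement of a codimension-one closed point. This part of the proposal is sound and essentially identical to the paper's.

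The genuine gap — which you yourself flag but do not resolve — is non-degeneracy of the constructed $\gamma : \Theta^2 \to \cX$. Your kernel analysis ($(a,b) \in \ker(\Phi : \bG_m^2 \to \op{Aut}(\gamma(0,0)))$ implies $f \simeq g$ or $f,g$ antipodal) is airtight only for $\cX = X/G$, where conjugacy classes of one-parameter subgroups are a global invariant. For general $\cX$, the obstruction is precisely what you gesture at: the kernel lives in the automorphism group at the deepest point $\gamma(0,0)$, while antipodality is a statement about how $f$ and $g$ factor through $B\bG_m$ at $p = \gamma(1,1)$, and one has to propagate the kernel relation across all of $\Theta^2$. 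Your proposed fix — "canonical specialization maps along $h|_{\Theta\times\{0\}}$ and $h|_{\{0\}\times\Theta}$" — does not obviously supply this: specialization of automorphism groups goes in the wrong direction (stabilizers can only jump up at special points), so knowing a relation holds at $\gamma(0,0)$ gives no purchase on $\gamma(1,1)$. The paper states explicitly that this step is "not classical" and requires the Tannakian formalism of \cite{bhatt2015tannaka} together with the categorical Kirwan surjectivity \autoref{thm:categorical_Kirwan}, effectively analyzing the pullback functor $\Perf(\cX) \to \Perf(\Theta^2)$ to force $\gamma$ to factor through the quotient torus. That is a substantially heavier input than what your proposal contains, and without it the non-degeneracy claim is unproven.
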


\begin{proof}[Proof idea]

A rational $1$-simplex is represented by a map $\bA^2 / \bG_m^2 = \Theta \times \Theta \to \cX$, which is equivalent to a map $\Theta \to \Tstack{\cX}$. One can reduce the claim to the existence and uniqueness of a dotted arrow making the following diagram commute:
\[
\xymatrix{ \Theta -\{0\} \simeq \pt \ar[r]^{f} \ar[d]^{1} & \Tstack{\cX} \ar[d]^{\ev_1} \\ \Theta \ar[r]^g \ar@{-->}[ur] & \cX },
\]
and such that the resulting map $\gamma : \bA^2/\bG_m^2 \to \cX$ is non-degenerate. The left vertical map is the open inclusion of the complement of a codimension $1$ closed point, so this lifting property essentially follows from the valuative criterion for the map $\ev_1$. The proof that $\gamma$ is non-degenerate is not classical: it uses the Tannakian formalism \cite{bhatt2015tannaka} and \autoref{thm:categorical_Kirwan} to show that if the homomorphism $\bG_m^2 \to \op{Aut}(\gamma(0,0))$ has a positive dimensional kernel then $f$ and $g$ must be either the same or antipodal.
\end{proof}

This lemma implies the uniqueness of a non-abelian Harder-Narasimhan filtration for $p \in \cX(\bC)$, assuming its existence. The sign of $\mu(f)$ must differ for antipodal points, so \autoref{lem:convexity} implies that for two rational points $f,g \in \iDeg(\cX,p)$ such that $\mu(f),\mu(g)>0$, there is a unique rational $1$-simplex connecting $f$ and $g$. Now we simply observe that the restriction of $\mu$ to this one simplex is the quotient of a linear form by the square root of a positive definite quadratic form. Such a function is strictly convex upward and therefore has a unique maximum on that interval.

\subsection{$\Theta$-stratifications}

The non-abelian Harder-Narasimhan problem is only the first step towards a classification theory for a stack $\cX$ modelled after that of $\stackCoh(X)_P$. Ideally the locus of $\Theta$-semistable points defines an open substack $\cX^{\ss} \subset \cX$, and the non-abelian Harder-Narasimhan filtration varies upper-semi-continuously in families. Fix a totally ordered set $\Gamma$.
\begin{defn} \label{defn:Theta_stratification}
A \emph{$\Theta$-stratum} in an algebraic stack $\cX$ is a closed substack which is identified with a connected component of $\Tstack{\cX}$ under the evaluation map $\ev_1 : \Tstack{\cX} \to \cX$. A \emph{$\Theta$-stratification} of $\cX$ is a collection of open substacks $\cX_{\leq c} \subset \cX$ for $c \in \Gamma$ such that $\cX_{\leq c'} \subset \cX_{\leq c}$ for $c'<c$ and the closed subset
\[
\cX_{\leq c} \setminus \bigcup_{c' <c} \cX_{\leq c'} = \bigsqcup \cS_\alpha
\]
is a disjoint union of $\Theta$-strata in $\cX_{\leq c}$.\footnote{Here $\alpha$ lies in some indexing set. The definition of $\Theta$-stratification in \cite{halpern2014structure} is slightly more involved, but it is equivalent to this condition for stacks $\cX$ which are open substacks of a $\Theta$-reductive stack by \cite{halpern2014structure}*{Lemma 3.7}.} The \emph{center} of $\cS_\alpha$ is defined to be the open substack $\cZ_\alpha^{\ss} \subset \iMap(\pt/\bG_m,\cX)$ which is the preimage of $\cS_\alpha \subset \Tstack{\cX}$ under the map $\sigma : \iMap(\pt/\bG_m,\cX) \to \Tstack{\cX}$ induced by the projection $\Theta \to \pt/\bG_m$. The map $\ev_0 : \cS_\alpha \subset \Tstack{\cX} \to \iMap(\pt/\bG_m, \cX)$ factors through $\cZ_\alpha^{\ss}$ as well.
\end{defn}

Consider a stack $\cX$ and a subset consisting of rational points $S \subset \iComp(\cX,\id_\cX)$ along with a map $\mu : S \to \Gamma$. We define the \emph{stability function} for $p \in \cX$:\footnote{The abuse of terminology here is justified, because typically $\Gamma = \bR$, and the map $\mu$ will be a numerical invariant, restricted to the subset $S$.}
\[
M^\mu(p) := \sup \left\{ \mu([f]) \left| f: \Theta \to \cX \text{ with } [f] \in S \text{ and } f(1) \simeq p \right. \right\}.
\]
Given this set-up, we shall denote by $\cS \subset \Tstack{\cX}$ the union of the connected components corresponding to $S$.

\begin{prop}
Let $\cX$ be a $\Theta$-reductive stack with $S \subset \iComp(\cX,\id_\cX)$ and $\mu : S \to \Gamma$ as above. Assume that
\begin{enumerate}
\item \textbf{HN-property:} For all $p \in \cX$, $\cS\times_{\cX} p$ is either empty or contains a unique point $f : \Theta \to \cX$ along with $f(1) \simeq p$, up to ramified covers of $\Theta$, such that $\mu([f]) = M^{\mu}(p)$, the \emph{non-abelian Harder-Narasimhan (HN) filtration}.
\item \textbf{Local finiteness:} For any map $\varphi : T \to \cX$, with $T$ a finite type scheme, there there is a finite subset $\{s_1,\ldots,s_n\} \subset S$ such that the non-abelian HN filtration of any $p \in T$ corresponds to a point in $s_i$ for some $i$.
\item \textbf{Semi-continuity:} If $f : \Theta \to \cX$ is a non-abelian HN filtration for $f(1)$, then $M^\mu(f(0)) \leq \mu([f])$.
\end{enumerate}
Then the subsets
\[
|\cX_{\leq c}| := \{p \in \cX | M^\mu(p) \leq c \} \subset |\cX|
\]
are open, and the corresponding open substacks $\cX_{\leq c}$ form a $\Theta$-stratification of $\cX$.
\end{prop}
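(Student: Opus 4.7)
The plan is to prove openness of each $\cX_{\leq c}$ first, and then identify each piece of $\cX_{\leq c} \setminus \bigcup_{c' < c} \cX_{\leq c'}$ as a $\Theta$-stratum. For openness, I would test on an arbitrary finite type scheme $\varphi: T \to \cX$ and show that $\{t \in T : M^\mu(\varphi(t)) > c\}$ is closed. By the HN-property, a point $t$ satisfies $M^\mu(t) > c$ if and only if its canonical HN filtration lies in some component $s_i \in S$ with $\mu(s_i) > c$; the local finiteness hypothesis then implies that only finitely many components $s_1, \ldots, s_n$ are relevant on $T$, so
\[
\{t \in T : M^\mu(\varphi(t)) > c\} = \bigcup_{i : \mu(s_i) > c} Y_i, \qquad Y_i := \op{image}\bigl(\cS_i \times_\cX T \to T\bigr),
\]
where $\cS_i \subset \Tstack{\cX}$ is the connected component corresponding to $s_i$.

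To show each $Y_i$ is closed, $\Theta$-reductivity provides the valuative criterion: if $R$ is a DVR with fraction field $K$ and $\varphi: \Spec R \to T$ has $\varphi_\eta \in Y_i$, then a $K$-filtration $f_\eta: \Theta_K \to \cX$ with $f_\eta(1) \simeq \varphi_\eta$ lifts to a $K$-point of $\cS_i \times_\cX \Spec R$. Since the connected components of the fibers of $\ev_1$ are proper over $\Spec R$ by definition of $\Theta$-reductivity, $f_\eta$ extends to $\tilde f: \Theta_R \to \cX$, exhibiting $\varphi_s \in Y_i$. Combined with the constructibility of $Y_i$, obtained by using local finiteness to restrict $\cS_i \times_\cX T$ to a quasi-compact subspace before applying Chevalley, this gives closedness of $Y_i$ in $T$ and hence openness of $\cX_{\leq c}$.

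Having openness, I would recognize the strata. Semi-continuity ensures that for $f \in \cS_i$ with $\mu(s_i) = c$, one has $M^\mu(f(0)) \leq c$, so both $f(0)$ and $f(1)$ lie in $\cX_{\leq c}$ and the whole map $f: \Theta \to \cX$ factors through $\cX_{\leq c}$. Hence $\cS_i$ sits as a connected component of the open substack $\Tstack{\cX_{\leq c}} \subset \Tstack{\cX}$. By the HN-property, the map $\ev_1: \cS_i \to \cY_c := \cX_{\leq c} \setminus \bigcup_{c' < c} \cX_{\leq c'}$ is a monomorphism onto the $i$-th piece of $\cY_c$; since $\ev_1$ is proper on each connected component of $\cS_i$ by $\Theta$-reductivity, this monomorphism is a closed immersion, verifying the $\Theta$-stratum axiom of \autoref{defn:Theta_stratification}.

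The main obstacle I anticipate is the closedness of each $Y_i$: because $\cS_i$ is only locally of finite type over $\cX$, the base change $\cS_i \times_\cX T$ can have infinitely many components even for finite type $T$, and one must use local finiteness carefully to extract a quasi-compact part whose image already equals $Y_i$, so that Chevalley constructibility combines cleanly with closure under specialization. The remaining pieces — upgrading the HN-section to a closed immersion via $\Theta$-reductivity, and compatibility between strata and the nested open substacks via semi-continuity — are then relatively formal consequences of the three hypotheses.
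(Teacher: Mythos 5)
Your high-level strategy is the right one: reduce openness of $\cX_{\leq c}$ to a statement over finite-type test schemes, use the valuative criterion for properness (which is exactly what $\Theta$-reductivity supplies) to get stability under specialization, then identify the strata via the HN-property and recognize that semi-continuity is what places each HN filtration $f : \Theta \to \cX$ entirely inside $\cX_{\leq c}$. But there is a genuine gap at the constructibility step, and your proposed fix does not work as stated. The set $Y_i$ is by definition the image of \emph{all} filtrations in $\cS_i \times_\cX T$, whereas local finiteness constrains only the \emph{HN} filtrations of points of $T$: it selects finitely many components $\cS_1, \ldots, \cS_n \subset \Tstack{\cX}$, but says nothing about which of the (possibly infinitely many) connected components of a fixed $\cS_i \times_\cX T$ are involved. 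A union of infinitely many closed subsets of a noetherian $T$, each the image of a proper connected component, is stable under specialization but need not be closed (compare the set of closed points of $\bA^1$), so your valuative-criterion argument does not by itself give closedness, and ``restrict to a quasi-compact subspace'' is precisely the thing that needs a proof rather than an assertion.

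The way out is a downward induction on the finitely many values $\mu(s_1), \ldots, \mu(s_n)$ occurring over $T$ (there are finitely many by local finiteness): once $\cX_{\leq c'}$ is known to be open for all $c' > c := \mu(s_i)$, the restriction of $\ev_1 \colon \cS_i \to \cX$ over $\cX_{\leq c}$ has the property that every fiber contains at most one point, because for $p$ with $M^\mu(p) \leq \mu(s_i)$, any $f \in \cS_i$ over $p$ has $\mu([f]) = \mu(s_i) \geq M^\mu(p)$ forcing equality, so $f$ is the HN filtration and is unique. One then wants to show this restricted map is a proper \emph{monomorphism} (hence a closed immersion, hence quasi-compact, resolving the constructibility issue). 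This brings out the second point you pass over too quickly: the HN-property as stated gives uniqueness at field-valued points, i.e.\ universal injectivity, which is strictly weaker than being a monomorphism; some further input (separatedness and unramifiedness of $\ev_1$ on the relevant locus, or a reduction argument) is required to promote pointwise uniqueness to a monomorphism, and this is in fact the real content of producing the closed immersion required by \autoref{defn:Theta_stratification}. Finally, a minor point: $\Theta$-reductivity in \autoref{defn:theta_reductive} is phrased for finite-type rings, so invoking it for a DVR requires an appeal to the (weakly $\Theta$-reductive) valuative-criterion form or a reduction to the finite-type case.
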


One non-trivial consequence of the theory of numerical invariants is that the semi-continuity property follows automatically from the first two in the following special case: Given a numerical invariant $\mu : \iComp(\cX,\id_\cX) \to \bR$, we define the subset $S \subset \Comp(\cX,\id_\cX)_1$ to consist of those connected components of $\Tstack{\cX}$ which contain a point $f$ for which $\mu(f) = M^\mu(f(1))$. We restrict $\mu$ to define a function $S \to \Gamma = \bR$.

\begin{thm} \cite{halpern2014structure}*{Theorem 3.16}
Let $\mu$ be a numerical invariant on a $\Theta$-reductive stack. Assume that $\mu$ satisfies the HN property, and that for any finite type $T$ with a map $T \to \cX$, the non-abelian HN filtrations of points of $T$ correspond to finitely many points of $\iComp(\cX,\id_\cX)$. Then the subsets
\[
|\cX_{\leq c}| := \{p \in \cX | M^\mu(p) \leq c \} \subset |\cX|
\]
are open, and the corresponding open substacks of $\cX_{\leq c}$ form a $\Theta$-stratification of $\cX$.
\end{thm}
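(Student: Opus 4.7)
The plan is to apply the preceding proposition, whose first two hypotheses---the HN-property and local finiteness---are given by assumption. The task therefore reduces to verifying the semi-continuity axiom: if $f : \Theta \to \cX$ is the non-abelian Harder-Narasimhan filtration of $p = f(1)$, then $M^\mu(f(0)) \leq \mu([f])$.

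I will establish semi-continuity by contradiction. Suppose there exists $g : \Theta \to \cX$ with $g(1) \simeq f(0)$ and $\mu([g]) > \mu([f])$. The key step is to combine $f$ and $g$ into a single map $F : \Theta^2 \to \cX$ with $F|_{\Theta \times \{1\}} = f$ and $F|_{\{0\} \times \Theta}$ the canonical $\bG_m$-equivariant extension of $g$. Equivalently, $F$ corresponds to a morphism $\tilde F : \Theta \to \Tstack{\cX}$ lifting $f$ along $\ev_1$ with $\tilde F(0)$ equal to $g$, viewed as a point in the fiber $\Tstack{\cX}_{f(0)}$. The existence of this lift is supplied by $\Theta$-reductivity, via a lifting argument parallel to the one appearing in the proof of \autoref{lem:convexity}.

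Given $F$, consider the restriction $F|_{\{1\} \times \Theta}$, i.e., the composite $\phi^*F$ for $\phi : \Theta \to \Theta^2$, $s \mapsto (1,s)$, which is another filtration of $p$. Writing $F^*\ell = \alpha_1 q_1 + \alpha_2 q_2 \in H^2(\Theta^2;\bQ)$ and $F^*b = \beta_{11}q_1^2 + \beta_{12}q_1q_2 + \beta_{22}q_2^2 \in H^4(\Theta^2;\bQ)$, and using $\phi^*q_1 = 0$, $\phi^*q_2 = q$, one computes $\mu(\phi^*F) = \alpha_2/\sqrt{\beta_{22}}$. The same coefficients $\alpha_2, \beta_{22}$ arise when restricting $F^*\ell$ and $F^*b$ to $\{0\}\times\Theta \simeq B\bG_m\times\Theta$ and pulling back along the forgetful map $\Theta \to B\bG_m\times\Theta$, yielding $\mu([g]) = \alpha_2/\sqrt{\beta_{22}}$. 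Hence $\phi^*F$ is a filtration of $p$ whose numerical invariant equals $\mu([g]) > \mu([f])$, contradicting the HN property of $f$.

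The main obstacle is the construction of the lift $\tilde F$: unlike in the usual valuative criterion, we are prescribing the value at the \emph{closed} point $0 \in \Theta$ rather than extending data from the generic point. This is made possible by the properness of the connected components of $\ev_1 : \Tstack{\cX} \to \cX$ supplied by $\Theta$-reductivity, together with the same Tannakian ingredients used in the proof of \autoref{lem:convexity} to certify that the resulting $2$-parameter degeneration sits inside a genuine rational $1$-simplex of $\iDeg(\cX,p)$. Once semi-continuity is in hand, openness of $|\cX_{\leq c}|$ and the $\Theta$-stratification conclusion follow immediately from the preceding proposition.
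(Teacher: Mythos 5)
Your overall strategy is sound: the preceding proposition gives the conclusion once one checks the semi-continuity axiom, and the HN property and local finiteness are supplied by hypothesis, so everything reduces to showing that a putative $g \in \iDeg(\cX, f(0))$ with $\mu([g]) > \mu([f])$ produces a filtration of $p$ with larger invariant than the HN filtration $f$. Moreover, your coefficient computation is correct: since $\mu$ descends to a function on $\iComp(\cX,\id_\cX)$, it is constant on connected components of $\Tstack{\cX}$, so if a non-degenerate $F : \Theta^2 \to \cX$ with the stated restrictions existed, then $\phi^*F = F|_{\{1\}\times\Theta}$ and $g$ would lie in the same component and $\mu(\phi^*F) = \alpha_2/\sqrt{\beta_{22}} = \mu([g])$, contradicting the HN property at $p$.

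The gap is in the construction of $F$, and the tools you invoke do not close it. What $\Theta$-reductivity gives, via properness of the components of $\ev_1$, is the valuative criterion used in the proof of \autoref{lem:convexity}: a point of $\Tstack{\cX}$ over the \emph{generic} locus $\{1\} \simeq \Theta\setminus\{0\}$, together with a map $\Theta\to\cX$, extends uniquely to a section over all of $\Theta$. In your problem, the data $g$ lives over the \emph{closed} point $\{0\}\subset\Theta$, and you need to produce a section of $\Tstack{\cX}\times_{\cX,f}\Theta\to\Theta$ passing through $g$; this is the opposite filling direction, and it is not a consequence of properness. (Properness lets you pass from the generic fiber to the special fiber of a family, never back.) The Tannakian non-degeneracy argument in \autoref{lem:convexity} is a separate check, applied once a lift is in hand, so it also doesn't help produce the lift. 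Note that in the motivating case $\cX = \stackCoh(X)$ such a lift does exist, but by an argument special to abelian categories (lift a filtration of $\gr(E_\bullet)$ to a bifiltration of $E$ by taking preimages), not by $\Theta$-reductivity; whatever the general mechanism is, it has to play the role that the abelian-category structure plays there. As written, your argument assumes precisely the non-trivial content of \cite{halpern2014structure}*{Theorem~3.16} rather than establishing it, so this step needs a genuinely different idea---most likely one that works inside $\iDeg(\cX,f(0))$ using the distinguished points $\sigma(f)$, $\sigma(f)^{-1}$ and \autoref{lem:convexity} to manufacture the filtration of $p$, rather than a direct section-lifting over $\Theta$.
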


We use this in \cite{halpern2014structure} to construct a $\Theta$-stratification of the stack of objects an abelian subcategory $\cA \subset D^b(X)$ for a $K3$ surface $X$, corresponding to a Bridgeland stability condition on $D^b(X)$. This is the first example of a $\Theta$-stratification which is not known to admit a description as a certain limit of Kempf-Ness stratifications of larger and larger GIT problems as in \cites{zamora2014git, hoskins2014stratifications}.

\subsection{The search for $\Theta$-reductive stacks}

At the moment, the theory of $\Theta$-stability is still in a nascent stage, with much yet to be developed. One of the most intriguing problems is to find new examples of $\Theta$-reductive enlargements of classically studied moduli problems. For instance, we are not aware of a $\Theta$-reductive enlargement of the moduli of principal $G$-bundles on a smooth curve $C$, except in the case where $G = \GL_n$ and one has the $\Theta$-reductive enlargement $\stackCoh(X)$.

Another question, related to the theory of $K$-stability, is whether the moduli stack $\cX$ of flat families of projective schemes along with a relatively ample invertible sheaf is $\Theta$-reductive. \autoref{thm:Tstack} tells us that for abstract reasons there exists a locally finite type algebraic space $\Tstack{\cX}_{[(Y,L)]}$ classifying test configurations for a fixed polarized scheme $(Y,L) \in \cX$, and it is a foundational open question whether or not the connected components of this space are proper.

\section{Derived Kirwan surjectivity}

We now discuss applications of $\Theta$-stratifications. The original applications were for computing the topology of the stack $\cX$, in the case when $\cX$ is the moduli stack of vector bundles on a curve \cite{atiyah1983yang} and the stratification is the Harder-Narasimhan-Shatz stratification, or when $\cX = X/G$ is a quotient of a projective variety by a reductive group and the stratification is the Kempf-Ness stratification \cite{kirwan1984cohomology}. In our language, the theorem states
\begin{thm}[Kirwan surjectivity] \label{thm:Kirwan_surjectivity}
If $\cX$ is a \emph{smooth} locally finite type algebraic stack with a $\Theta$-stratification $\cX = \cX^{\ss} \cup \bigcup_\alpha \cS_\alpha$, then there is an isomorphism
\[
H^\ast(\cX; \bQ) \simeq H^\ast(\cX^{\ss};\bQ) \oplus \bigoplus_{\alpha} H^{\ast- c_\alpha} (\cZ_\alpha;\bQ)
\]
where $c_\alpha = \op{codim}(\cS_\alpha,\cX)$, assuming $c_\alpha \to \infty$ for large $\alpha$.
\end{thm}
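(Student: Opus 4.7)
The plan is to proceed by induction along the $\Theta$-stratification and to analyze each stratum via a Gysin/localization sequence, using the positivity of $\bG_m$-weights on normal bundles, which is built into the very definition of a $\Theta$-stratum, as the source of an Atiyah--Bott-style splitting. Concretely, for each ``level" $c$ in the totally ordered indexing set, we have an open immersion $\cX_{<c} \hookrightarrow \cX_{\leq c}$ with closed complement a disjoint union $\bigsqcup_\alpha \cS_\alpha$ of $\Theta$-strata. Since $\cX$ is smooth, each $\cS_\alpha$ is a smooth closed substack of pure codimension $c_\alpha$ with a normal bundle $N_\alpha$, and the Thom isomorphism combined with excision gives a long exact sequence
\[
\cdots \to H^{\ast - 2 c_\alpha}(\cS_\alpha;\bQ) \xrightarrow{i_{\alpha,\ast}} H^\ast(\cX_{\leq c};\bQ) \to H^\ast(\cX_{<c};\bQ) \to \cdots.
\]
(I use $2 c_\alpha$ for real codimension; the shift $c_\alpha$ in the statement should be interpreted accordingly.) The goal is to show that $i_{\alpha,\ast}$ is injective, which splits this into a short exact sequence, and then to iterate.

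Next, I would exploit the structure of $\Theta$-strata. By \autoref{defn:Theta_stratification}, the stratum $\cS_\alpha$ comes with a projection $\ev_0 : \cS_\alpha \to \cZ_\alpha^{\ss}$, and this map is an iterated affine bundle (the fibers being the positive-weight part of the tangent complex at a point of the center). In particular $\ev_0^\ast : H^\ast(\cZ_\alpha^{\ss};\bQ) \xrightarrow{\sim} H^\ast(\cS_\alpha;\bQ)$ is an isomorphism, so the right-hand side of the asserted formula matches the kernel/image contribution of each stratum, provided the Gysin map is injective.

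The main step, and the principal obstacle, is verifying injectivity of $i_{\alpha,\ast}$. Here I would invoke the Atiyah--Bott criterion: $i_\alpha^\ast \circ i_{\alpha,\ast}$ is multiplication by the Euler class $e(N_\alpha) \in H^{2 c_\alpha}(\cS_\alpha;\bQ)$, so it suffices to show $e(N_\alpha)$ is a non-zero-divisor. Every point of $\cZ_\alpha^{\ss}$ carries a canonical $\bG_m$ in its automorphism group (that is the content of $\cZ_\alpha^{\ss} \subset \iMap(\pt/\bG_m,\cX)$), which provides a canonical map $\cZ_\alpha^{\ss} \to B\bG_m$ and thus a distinguished class $q \in H^2(\cZ_\alpha^{\ss};\bQ)$. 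The normal bundle $N_\alpha$ pulls back from $\cZ_\alpha^{\ss}$ and, crucially, decomposes under the $\bG_m$-action into weight spaces all of strictly positive weight --- this is exactly the definition of the $\Theta$-stratum. Hence
\[
e(N_\alpha) = \prod_i \bigl(n_i \, q + \beta_i\bigr) \in H^\ast(\cZ_\alpha^{\ss};\bQ),
\]
with positive integers $n_i$ and classes $\beta_i$ of smaller $q$-degree. As a polynomial in $q$ with leading coefficient $\prod n_i \neq 0$, this product is a non-zero-divisor in $H^\ast(\cZ_\alpha^{\ss};\bQ)\simeq H^\ast(\cS_\alpha;\bQ)$, and injectivity follows. (The subtle point to check is that the positive-weight assertion really does persist with the requisite normalization in cohomology of the possibly singular $\cZ_\alpha^{\ss}$; one can reduce to the case where a finite cover of $\cZ_\alpha^{\ss}$ has a $\bG_m$-equivariant smooth chart and apply the usual equivariant calculation.)

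Finally I would assemble everything: the split short exact sequences provide compatible canonical splittings $H^\ast(\cX_{\leq c};\bQ) \simeq H^\ast(\cX_{<c};\bQ) \oplus \bigoplus_{\alpha : \mu_\alpha = c} H^{\ast - 2 c_\alpha}(\cZ_\alpha^{\ss};\bQ)$. Passing to the filtered colimit over $c \in \Gamma$ gives $H^\ast(\cX;\bQ) = \varinjlim H^\ast(\cX_{\leq c};\bQ)$, and the hypothesis $c_\alpha \to \infty$ ensures that in each fixed cohomological degree only finitely many strata contribute, so the colimit stabilizes degree-by-degree and produces the asserted direct sum decomposition.
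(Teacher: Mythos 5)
The paper does not supply its own proof of \autoref{thm:Kirwan_surjectivity}; it states it as the known result of Atiyah--Bott and Kirwan, remarking only that it is equivalent to surjectivity of the restriction maps $H^\ast(\cX;\bQ) \to H^\ast(\cX_{\leq c};\bQ)$. Your proposal reproduces the classical Atiyah--Bott/Kirwan argument underlying that citation --- Gysin/localization sequence, self-intersection formula, positivity of $\bG_m$-weights on the normal bundle, retraction of each stratum onto its center, and passage to the limit over the filtration using $c_\alpha \to \infty$ --- and the overall structure is sound.

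Two points deserve tightening, both concentrated in the non-zero-divisor step. First, the displayed factorization $e(N_\alpha) = \prod_i (n_i\, q + \beta_i)$ should be invoked only after the splitting principle; the normal bundle need not actually decompose into lines, but the Euler class computation and the conclusion about the leading $q$-coefficient survive passage to a flag bundle. Second, and more substantively, knowing that $e(N_\alpha)$ is a monic (up to nonzero scalar) polynomial in $q$ of degree $c_\alpha$ does not by itself make it a non-zero-divisor; you need $q$ to be algebraically independent in $H^\ast(\cZ_\alpha^{\ss};\bQ)$. The aside about ``finite covers with equivariant charts'' is vaguer than necessary. The cleanest route is to observe that the canonical central $\bG_m$ inside the stabilizers exhibits $\cZ_\alpha^{\ss}$ as a $\bG_m$-gerbe over its rigidification $\cZ_\alpha^{\mathrm{rig}}$, so that rationally $H^\ast(\cZ_\alpha^{\ss};\bQ) \simeq H^\ast(\cZ_\alpha^{\mathrm{rig}};\bQ)[q]$ is a free $\bQ[q]$-module; then a polynomial in $q$ with nonzero leading coefficient is manifestly a non-zero-divisor, and your Atiyah--Bott criterion closes the argument. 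With this patch the proof is a correct reconstruction of the classical argument the paper is citing.
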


The isomorphism in this theorem is not canonical: the theorem amounts to the fact that the canonical restriction map $H^\ast(\cX;\bQ) \to H^\ast(\cX_{\leq c};\bQ)$ is surjective for any $c$, and the direct sum decomposition depends on a choice of splitting for this surjection. 

\subsection{Categories and cohomology theories}

Our goal is to categorify Kirwan's surjectivity theorem, \autoref{thm:Kirwan_surjectivity} to say even more about the geometry of $\cX$. We adopt the perspective of non-commutative algebraic geometry. The assignment of a scheme $X$ to the dg-category of perfect complexes $\Perf(X)$ can be enhanced to a functor of $\infty$-categories from schemes to the $\infty$-category of dg-categories $\dgCat$, and we can regard this functor itself as a sort of enhanced cohomology theory. More precisely, one can define the category of non-commutative motives as a localization of the $\infty$-category of dg-categories (see \cite{blumberg2013universal} for the spectral case), and the functor $X \mapsto \Perf(X)$ is the assignment of a scheme to its non-commutative motive.



In \cite{blanc2013topological}, Anthony Blanc constructs a ``topological $K$-theory" spectrum $K^{top}(\cC)$ associated to any dg-category over $\bC$ and a natural Chern character map to the periodic cyclic homology $K^{top}(\cC) \otimes \bC \to HP(\cC)$ which is an equivalence when $\cC = \Perf(X)$ for a scheme $X$. Let $G_c \subset G$ be a maximal compact subgroup, and let $K_{G_c}(X)$ denote the equivariant topological $K$-theory \cites{segal1968equivariant,segal1970fredholm} of (the analytification of) $X$. With Daniel Pomerleano, we have shown that

\begin{thm}\cite{halpern2015equivariant}\label{thm:topological}
Let $X$ be smooth and projective-over-affine $G$-variety. Then there is a canonical equivalence $K^{top}(\Perf(X/G)) \xrightarrow{\simeq} K_{G_c}(X)$, and combined with the Chern character of \cite{blanc2013topological} this gives a canonical equivalence
\[
K_{G_c}(X) \otimes \bC \simeq K^{top}(\Perf(X/G)) \otimes \bC \xrightarrow{\simeq} HP(\Perf(X/G))
\]
\end{thm}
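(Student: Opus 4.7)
The overall strategy is to construct a natural comparison morphism, then prove it is an equivalence by a dévissage argument driven by a $\Theta$-stratification. First, one must build a canonical map $K^{top}(\Perf(X/G)) \to K_{G_c}(X)$: any $G$-equivariant perfect complex on $X$ has an underlying $G_c$-equivariant complex of topological vector bundles (after analytification and a suitable resolution), and since Blanc's $K^{top}$ is the universal additive invariant which is $\bA^1$-invariant and satisfies étale descent, one can factor this through $K^{top}(\Perf(X/G))$. The analogous story over $\Spec \bC$, together with the standard identification $K^{top}(\Perf(BG)) \simeq K^{top}(\Perf(\bC))^{hG_c} \simeq R(G_c) \simeq K_{G_c}(\mathrm{pt})$, provides the base case.

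Next, one reduces the general case to a sequence of ``building blocks'' using Kirwan's theorem. Choose a $G$-ample line bundle on $X$ (after possibly enlarging to an affine cone to ensure $\Theta$-reductivity, as discussed in \S2) and the associated Kempf--Ness $\Theta$-stratification $X/G = X^{ss}/G \cup \bigsqcup_\alpha \cS_\alpha$. By the categorical Kirwan surjectivity theorem (\autoref{thm:derived_Kirwan}), $\Perf(X/G)$ admits a semi-orthogonal decomposition whose pieces are $\Perf(X^{ss}/G)$ together with certain weight subcategories of $\Perf(\cZ_\alpha^{ss})$, where $\cZ_\alpha^{ss} = Y_\alpha^{ss}/L_\alpha$ is the center of the stratum $\cS_\alpha$. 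Since $K^{top}$ is an additive (in fact localizing) invariant of dg-categories, this semi-orthogonal decomposition gives a splitting
\[
K^{top}(\Perf(X/G)) \simeq K^{top}(\Perf(X^{ss}/G)) \oplus \bigoplus_\alpha K^{top}\bigl(\Perf(\cZ_\alpha^{ss})_w\bigr).
\]
On the topological side, the Atiyah--Bott/Kirwan decomposition of $K_{G_c}(X)$ relative to the same stratification yields a parallel splitting. The task is then to match these two decompositions summand by summand, which reduces the theorem to the case of the semistable locus and (inductively, on dimension) the centers $\cZ_\alpha^{ss}$.

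For the semistable locus, one uses that $X^{ss}/G$ admits a good moduli space and étale-locally looks like $\Spec A / H$ with $H$ reductive of smaller rank; a further stratification by stabilizer type combined with the descent properties of $K^{top}$ (étale descent plus Nisnevich descent in Blanc's setup) reduces matters to the affine case. For $X$ a smooth affine $G$-variety, equivariant Morita equivalence of $\Perf(X/G)$ with modules over a suitable equivariant sheaf of algebras, together with Luna-type slice arguments, degenerates the problem to the fixed-point formula $K^{top}(\Perf(BH)) \simeq R(H_c)$, which is known. The centers $\cZ_\alpha^{ss}$ are themselves (after passing to the Levi $L_\alpha$) smooth projective-over-affine $L_\alpha$-varieties of smaller type, so an induction (say, on $\dim X + \dim G$) closes the argument.

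The Chern character half of the statement is then nearly formal: Blanc proved that $K^{top}(\cC) \otimes \bC \to HP(\cC)$ is an equivalence whenever $\cC = \Perf(Y)$ for a scheme $Y$, and this property is preserved under semi-orthogonal decomposition. Applying the semi-orthogonal decomposition of $\Perf(X/G)$ above, each summand can be further reduced to a scheme via a finite cover by a maximal torus quotient and Galois descent along $N(T)/T$, where the scheme case of Blanc applies; this gives the Chern character equivalence for $\Perf(X/G)$ itself. \textbf{The main obstacle} I expect is verifying that Blanc's $K^{top}$ enjoys enough descent/localization to push the semi-orthogonal decomposition through and to match the combinatorial indexing of weight subcategories on the noncommutative side with the Atiyah--Bott pieces on the topological side; in particular, one must show that the specific weight subcategories $\Perf(\cZ_\alpha^{ss})_w$ appearing in derived Kirwan surjectivity recover, under $K^{top}(-)$, exactly the $K_{G_c}$-theoretic contribution of the stratum $\cS_\alpha$, including the Thom isomorphism twist by the conormal bundle $N_{\cS_\alpha/\cX}^{\vee}$.
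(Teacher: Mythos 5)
Your high-level strategy — chopping up $\Perf(X/G)$ via categorical Kirwan surjectivity and matching the pieces against Atiyah–Bott — agrees in spirit with the method the paper cites, which realizes the noncommutative motive $[\Perf(X/G)]$ as a direct summand of a countable direct sum of copies of the motive of a quasi-projective Deligne–Mumford stack (or smooth quasi-projective scheme, via \cite{bergh2016geometricity}). But one of your steps is wrong in a way that cuts against the content of the theorem. The claimed identification $K^{top}(\Perf(\bC))^{hG_c} \simeq R(G_c)$ is false: $K^{top}(\Perf(\bC))$ is the complex $K$-theory spectrum, and its homotopy fixed points under $G_c$ compute Borel-equivariant $K$-theory of a point, which by the Atiyah–Segal completion theorem is the completion $R(G_c)^{\wedge}_I$ at the augmentation ideal, \emph{not} $R(G_c)$ itself. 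This distinction is precisely the nontrivial content of \autoref{thm:topological}: the theorem asserts that Blanc's $K^{top}$ of equivariant perfect complexes recovers genuine (uncompleted) equivariant $K$-theory $K_{G_c}(X)$, as opposed to the completed Borel version. Moreover, the base case $K^{top}(\Perf(BG)) \simeq R(G_c)$ for positive-dimensional reductive $G$ is itself a special case of the theorem being proved and cannot be treated as known input; your argument is circular at exactly this point.

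Your reduction is also more elaborate than what the paper describes, and much of the extra machinery appears unnecessary. You invoke Luna slices, Nisnevich and \'etale descent, stratification by stabilizer type, and (for the Chern character) Galois descent along $N(T)/T$. The paper's method uses only \autoref{thm:categorical_Kirwan} (which is the relevant result since $X$ is smooth; you quote \autoref{thm:derived_Kirwan}) to exhibit $[\Perf(X/G)]$ directly as a retract of a countable direct sum of copies of $[\Perf(\cY)]$ for $\cY$ a quasi-projective Deligne–Mumford stack or scheme. Once that is done, the comparison with $K_{G_c}(X)$ and the Chern character equivalence both follow from the corresponding statements for $\cY$ together with additivity of $K^{top}$, $K_{G_c}$, and $HP$; no separate descent argument is needed. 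The obstacle you flag at the end — matching the weight subcategories $\Perf(\cZ^{\ss}_\alpha)^w$ with the Atiyah–Bott contribution of each stratum, including the Thom twist by the conormal bundle — is a real and nontrivial bookkeeping point and is good to have identified, but the intermediate slice/descent machinery should be discarded and replaced with the direct-summand presentation.
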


The method of proof actually uses \autoref{thm:categorical_Kirwan} to ``chop up" the noncommutative motive $\Perf(X/G)$ and realize it as a direct summand of a direct sum of countably infinitely many copies of the noncommutative motive of a quasi-projective Deligne-Mumford stack (or even a smooth quasi-projective scheme, after \cite{bergh2016geometricity}). This has many more applications, for instance constructing a functorial pure Hodge structure on the topological $K$-theory $K_{G_c}(X)$ when $X$ is smooth and projective-over-affine and $\Gamma(\cO_X)^G$ is finite dimensional via the noncommutative Hodge-de Rham sequence.\footnote{This intriguingly suggests the existence of a category of ``stacky motives" which remembers the topological $K$-theory of $\cX$ and also admits realization functors to the category of mixed Hodge structures.}

\subsubsection{Borel-Moore homology theories}

We will also be interested in the (co)homology theory which de-categorifies the assignment $\cX \to D^b(\cX)$, the derived category of coherent sheaves. When $\cX$ is not smooth, then $\Perf$ and $\Db$ do not agree, and in fact they have different functoriality properties. The assignment $\cX \mapsto \Db(\cX)$ is covariantly functorial for proper maps (because the derived pushforward of a bounded complex of coherent sheaves has bounded coherent cohomology) and contravariantly functorial for smooth maps $f : \cY \to \cX$ (or more generally flat maps), because those are the maps for which $f^\ast: D_{qc}(\cX) \to D_{qc}(\cY)$ maps $D^b(\cX)$ to $D^b(\cY)$. Furthermore the projection formula\footnote{Explain} guarantees that proper pushforward commutes with smooth pullback in a cartesian diagram.

Classically, the Borel-Moore homology of locally compact spaces has a similar kind of functoriality: covariantly functorial for proper maps and contravariantly functorial for open immersions. In fact, we can associate a Borel-Moore homology theory $E^{BM}$ to any cohomology theory (satisfying suitable axioms) on quotient stacks, regarded as a contravariant functor from the category of $G$-quasi-projective schemes and equivariant maps $E : (G\text{-quasi-proj})^{op} \to \cC$ to some stable $\infty$-category $\cC$, such as the category of spectra or chain complexes over a ring $R$, provided $E$ satisfies certain axioms. The defining properties of $E^{BM}$ are
\begin{itemize}
\item a version of Poincare duality $E^{BM}(X) = E(X)[2 \dim X]$ when $X$ is smooth, and 
\item a localization exact triangle $E^{BM}(Z) \to E^{BM}(X) \to E^{BM}(U)$ for any closed subscheme $Z \hookrightarrow X$ with complement $U = X \setminus Z$.
\end{itemize}
For any $G$-quasi-projective scheme we choose a $G$-equivariant embedding into a smooth $G$-quasi-projective scheme $Z \hookrightarrow X$ and define
\[
E^{BM}(Z) := \op{fib}(E(X) \to E(X\setminus Z))[2(\dim X-\dim G)] \in \op{Ho}(\cC).
\]
Provided the functor $E$ satisfies certain axioms (Explained in \cite{halpern2015equivariant}), $E^{BM}(Z)$ will be canonically independent, as an object of the homotopy category $\op{Ho}(\cC)$, of the choice of closed embedding of $Z \hookrightarrow X$. This definition of $E^{BM}$ is covariantly functorial for proper maps of $G$-quasi-projective schemes and contravariantly functorial for open immersions.

\begin{ex}
Let $E : (G\text{-quasi-proj})^{op} \to \op{Ch}$ corresponds to the usual theory of equivariant cohomology, for instance $E$ could assign a $G$-scheme to the complex of singular cochains on the homotopy quotient $EG_c \times_{G_c} X$, then $E^{BM}(Z)$ is equivariant Borel-Moore homology of $Z$ with integral coefficients, $H_i(E^{BM}(Z)) \simeq H^{BM}_{G_c,i}(Z)$.
\end{ex}

\begin{ex}
If $E(X) = K_{G_c}(X)$ is the topological $K$-theory spectrum (regarded as a naive $G$-spectrum) in the sense of \cite{segal1970fredholm}, then $K_{G_c}^{BM}(Z)$ is the Borel-Moore $K$-homology, sometimes referred to as ``K-homology with locally compact supports" \cite{thomason1988equivariant}. $K_{G_c,i}^{BM}(X)$ is a module over the Grothendieck ring of complex representations $\op{Rep}(G_c)$, as is $K^i_{G_c}(X)$.

The Atiyah-Segal completion theorem \cite{atiyah1969equivariant} states that the Chern character $\op{ch} : K^\ast_{G_c}(X;\bQ) \to H^\ast_{G_c}(X;\bQ)$ induces an isomorphism between the $2$-periodization $\prod_{n \in \bZ} H^{i+2n}_{G_c}(X;\bQ)$ and the completion $K^i_{G_c}(X;\bQ)^\wedge$ with respect to the \emph{augmentation ideal}, the kernel of the dimension homomorphism $\dim : \op{Rep}(G_c) \to \bZ$. It follows from the functoriality of $\op{ch}$ and the definition of $K^{BM}_{G_c}$ that there is a Chern character for the Borel-Moore theory inducing an equivalence
\[
\op{ch}_i : K^{BM}_{G_c,i}(X;\bQ)^{\wedge} \to \prod_{n\in \bZ} H^{BM}_{G_c,i+2n}(X;\bQ).
\]
So as in the case of ordinary equivariant $K$-theory, the equivariant Borel-Moore $K$-homology can be regarded as a ``spreading out" of equivariant Borel-Moore homology to a finitely generated module over $\op{Rep}(G_c)_\bQ$.
\end{ex}

\begin{thm}\cite{halpern2015equivariant} \label{thm:topological2}
For any $G$-quasi-projective scheme $X$, there is a canonical equivalence
\[
\rho_{G,X} : K^{top}(\Db(X/G)) \to K^{BM}_{G_c}(X)
\]
which commutes up to homotopy with proper pushforward, restriction to an open subset, and restriction of equivariance to a reductive subgroup $H \subset G$ such that $H$ is the complexification of $H \cap G_c$.
\end{thm}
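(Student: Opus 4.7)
The plan is to reduce to the smooth case of \autoref{thm:topological} via localization sequences. Choose a $G$-equivariant closed immersion $i : X \hookrightarrow M$ into a smooth $G$-quasi-projective scheme $M$, with open complement $j : U = M \setminus X \hookrightarrow M$. By the construction of the Borel--Moore theory,
\[
K^{BM}_{G_c}(X) \simeq \op{fib}\bigl( K_{G_c}(M) \to K_{G_c}(U) \bigr) [2(\dim M - \dim G)],
\]
and the even shift is invisible after invoking Bott periodicity for equivariant complex $K$-theory of a compact Lie group (this, in particular, is what ensures independence of this recipe from $M$). On the categorical side, Thomason--Trobaugh provides a cofiber sequence of idempotent-complete dg-categories
\[
\Perf_X(M/G) \to \Perf(M/G) \xrightarrow{j^*} \Perf(U/G),
\]
and smoothness of $M/G$ gives $\Perf_X(M/G) = \Db_X(M/G)$, the full subcategory of $\Db(M/G)$ set-theoretically supported on $X$. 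Applying Blanc's topological $K$-theory, which is a localizing invariant on dg-categories, and using \autoref{thm:topological} to identify the two right-hand terms with $K_{G_c}(M)$ and $K_{G_c}(U)$, I obtain a canonical equivalence
\[
K^{top}(\Db_X(M/G)) \xrightarrow{\simeq} K^{BM}_{G_c}(X).
\]
I then define $\rho_{G,X}$ as the composition of this equivalence with $K^{top}$ of the pushforward dg-functor $i_* : \Db(X/G) \to \Db_X(M/G)$.

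The main obstacle is to show that $K^{top}(i_*)$ is itself an equivalence: this is a topological-$K$-theory analog of Quillen's dévissage theorem in algebraic $K$-theory. My strategy is to filter the abelian category of $G$-equivariant coherent sheaves on $M$ supported on $X$ by $\cI_X^n$-torsion, where $\cI_X \subset \cO_M$ is the ideal sheaf of $X$. The successive subquotients are categories of equivariant coherent sheaves on infinitesimal thickenings of $X$, each of which is assembled from $\Coh(X/G)$ by a finite sequence of square-zero extensions. The question thereby reduces to showing that $K^{top}$ is insensitive to square-zero nilpotent thickenings -- the analog for Blanc's $K^{top}$ of Goodwillie's theorem on relative $K$-theory along nilpotent ideals. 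This is the technical heart of the proof; I would establish it at the level of non-commutative motives by combining Blanc's construction with a rigidity statement for $K^{top}$ along nilpotent extensions, the latter verified via Blanc's Chern character comparison to periodic cyclic homology, where the analog of Goodwillie's theorem is classical.

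Finally I must verify independence of $\rho_{G,X}$ from the choice of enlargement $M$ and the three naturality properties. Independence is obtained by a diagonal argument: given two embeddings $X \hookrightarrow M_1, M_2$, both constructions are identified with the one determined by the embedding $X \hookrightarrow M_1 \times M_2$, using compatibility of the localization fiber sequences on each side. Proper pushforward along a $G$-equivariant $f : Y \to X$ is handled by choosing compatible $G$-equivariant closed embeddings $Y \hookrightarrow M_Y$ and $X \hookrightarrow M_X$ together with a $G$-equivariant proper morphism $M_Y \to M_X$ extending $f$ (which can be arranged after possibly enlarging $M_X$), so that $f_*$ is induced on both sides from the smooth-case pushforward under \autoref{thm:topological} applied to $M_Y \to M_X$. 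Open restriction is treated analogously by restricting $M$ to the open subset. Restriction of equivariance along $H \hookrightarrow G$ follows from the corresponding compatibility in the smooth case (already part of \autoref{thm:topological}), together with the fact that fiber sequences of spectra are preserved by the restriction functors on $K^{top}$ and on $K^{BM}_{H_c}$.
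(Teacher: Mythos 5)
Your overall architecture --- embed $X$ into a smooth $G$-quasi-projective $M$, use the Thomason--Trobaugh localization sequence to identify $\op{fib}(K_{G_c}(M) \to K_{G_c}(U))$ with $K^{top}(\Db_X(M/G))$ via \autoref{thm:topological}, and then compare $\Db(X/G)$ with $\Db_X(M/G)$ via a d\'evissage argument --- is the right skeleton, and your treatment of independence and of the three naturality properties (diagonal/graph embeddings for pushforward, restriction of the ambient $M$ for open immersions, levelwise compatibility for restriction of equivariance) is the standard one and looks fine.

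The genuine gap is in how you propose to establish the d\'evissage equivalence $K^{top}(i_*) : K^{top}(\Db(X/G)) \xrightarrow{\simeq} K^{top}(\Db_X(M/G))$. You propose to reduce it to nilinvariance of $K^{top}$ and then verify nilinvariance by transporting the classical Goodwillie theorem across Blanc's Chern character $K^{top} \to HP$. But that Chern character is only an equivalence after $\otimes\,\bC$, and is only asserted (in Blanc, and in this paper) for $\Perf$ of a scheme --- not for $\Db$ of a singular stack, which is the category you actually need. So at best this route yields a \emph{rational} version of the comparison, whereas \autoref{thm:topological2} is an integral statement about spectra, and the paper later uses integral features (e.g.\ it feeds into $K^{BM}_{G_c}$ as a $\op{Rep}(G_c)$-module before completion). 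The reduction to "nilinvariance of $K^{top}$" is also nearly circular: nilinvariance of $K^{top}$ on $\Db$ for the thickenings $X \hookrightarrow X^{(n)}$ \emph{is} the d\'evissage statement you are trying to prove, so invoking a "Goodwillie analog" does not so much reduce the problem as rename it.

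The correct and more elementary way to close this is to exploit how $K^{top}$ is actually built: Blanc's $K^{top}(\cC)$ is the Bott-inverted geometric realization of $n \mapsto K(\cC \otimes_{\bC} \Perf(\Delta^n_{\bC}))$, where $\Delta^n_{\bC}$ is the smooth affine algebraic simplex. Because $\Delta^n_{\bC}$ is smooth, $\Db(X/G) \otimes_{\bC} \Perf(\Delta^n_{\bC}) \simeq \Db\big((X\times\Delta^n_{\bC})/G\big)$ and likewise for $\Db_X(M/G)$, and for each $n$ the closed immersion $X\times\Delta^n_{\bC} \hookrightarrow M\times\Delta^n_{\bC}$ into a smooth scheme satisfies Quillen--Schlichting d\'evissage on \emph{algebraic} $K$-theory. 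Since geometric realization and Bott-inversion preserve levelwise equivalences, the integral equivalence $K^{top}(i_*)$ follows with no appeal to $HP$ or rationalization. You should replace your Goodwillie/$HP$ step with this fiberwise-algebraic-d\'evissage argument; the rest of the proposal then goes through.
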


\subsection{Categorical Kirwan surjectivity}

In light of the categorification of cohomology theories discussed above, our categorification of \autoref{thm:Kirwan_surjectivity} will amount to the following:
\begin{thm}\footnote{We refer the reader to the proof of \autoref{cor:BM_surjectivity} below for how to extract \autoref{thm:Kirwan_surjectivity} from \autoref{thm:splitting}.} \label{thm:splitting}
Let $\cX$ be a smooth locally finite type algebraic stack $\cX$ with a $\Theta$-stratification $\cX = \cX^{\ss} \cup \bigcup \cS_\alpha$. Then the dg-functor of restriction to the open substack $\res_{\cX^{\ss}} : \Perf(\cX) \to \Perf(\cX^{\ss})$ admits a right inverse $\op{ext} : \Perf(\cX^{\ss}) \to \Perf(\cX)$ such that $\res_{\cX^{\ss}} \circ \op{ext} \simeq \id_{\Perf(\cX^{\ss})}$.
\end{thm}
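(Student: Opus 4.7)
The plan is to prove the theorem by induction on the $\Theta$-stratification. Writing $\cX = \cX^{\ss} \cup \bigsqcup_{\alpha} \cS_\alpha$ with strata indexed by $c \in \Gamma$, at each successive level $\cX_{\le c} \setminus \cX_{<c}$ is a disjoint union of $\Theta$-strata, so it suffices to produce a right inverse to each restriction $\Perf(\cX_{\le c}) \to \Perf(\cX_{<c})$ and patch them together. Because $\cX$ is locally of finite type, any fixed $F \in \Perf(\cX^{\ss})$ and any chosen quasi-compact open of $\cX$ will only involve finitely many strata, so once the one-stratum step is made functorial, passing to the colimit along $\cX = \bigcup_c \cX_{\le c}$ yields a single dg-functor $\op{ext}$ with $\res_{\cX^{\ss}} \circ \op{ext} \simeq \id$.

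The one-stratum step is where all the content sits. Suppose $\cX_{<c} = \cX_{\le c} \setminus \cS_\alpha$ for a single closed stratum $\cS_\alpha \subset \Tstack{\cX_{\le c}}$ with closed immersion $\ev_1 : \cS_\alpha \hookrightarrow \cX_{\le c}$ and projection $\ev_0 : \cS_\alpha \to \cZ_\alpha$ to its center. Let $\sigma : \cZ_\alpha \to \cX_{\le c}$ denote $\ev_1$ restricted through the section $\cZ_\alpha \hookrightarrow \cS_\alpha$ coming from the inclusion $\pt/\bG_m \hookrightarrow \Theta$ at $0$. Smoothness of $\cX$ forces $\cS_\alpha$ and $\cZ_\alpha$ to be smooth, and \'etale-locally on $\cZ_\alpha$ the formal neighborhood of $\cS_\alpha$ in $\cX_{\le c}$ is the total space of the normal bundle $N_{\cS_\alpha/\cX_{\le c}}|_{\cZ_\alpha}$, a $\bG_m$-equivariant bundle all of whose weights are strictly positive. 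I would then define the \emph{window subcategory} $\cG^w \subset \Perf(\cX_{\le c})$ to be the full subcategory of those $F$ such that the $\bG_m$-weights of $\sigma^\ast F \in \Perf(\cZ_\alpha)$ all lie in a fixed half-open interval $[w, w+\eta_\alpha)$, where $\eta_\alpha$ is the $\bG_m$-weight of $\det N^{\vee}_{\cS_\alpha/\cX_{\le c}}|_{\cZ_\alpha}$.

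The central technical claim is that restriction defines an equivalence $\cG^w \xrightarrow{\sim} \Perf(\cX_{<c})$; granting this, the extension functor at this step is the composite of the inverse with the inclusion $\cG^w \hookrightarrow \Perf(\cX_{\le c})$. Essential surjectivity is the easier direction: smoothness of $\cX$ guarantees that any $F \in \Perf(\cX_{<c})$ has \emph{some} perfect extension $\tilde F$ to $\cX_{\le c}$, and any two such extensions differ by an object supported on $\cS_\alpha$. The subcategory of objects supported on $\cS_\alpha$ is generated under shifts and cofibers by $\ev_{1\ast}$ applied to $\bG_m$-weight spaces on $\cZ_\alpha$, so by adding finitely many such terms one can shift the $\sigma^\ast$-weights of $\tilde F$ into the window $[w, w+\eta_\alpha)$. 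Fully faithfulness reduces to showing that any object of the kernel of $\res_{\cX_{<c}}$ lying in $\cG^w$ is zero, and this is checked via the Koszul resolution of $\ev_{1\ast}\cO_{\cS_\alpha}$ by the pushforwards of the exterior powers $\wedge^i N^{\vee}_{\cS_\alpha/\cX_{\le c}}$, together with the fact that the $\bG_m$-weight contributed by $\det N^\vee$ is exactly $\eta_\alpha$, so that the window width is the tightest possible that still forces the relevant Hom groups of weight-zero components to vanish.

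The main obstacle is precisely this window-equivalence, which is the heart of derived Kirwan surjectivity for a single stratum; it uses the smoothness of $\cX$ essentially (both for the existence of local extensions and for the local model of $\cS_\alpha \subset \cX_{\le c}$ as a Bialynicki-Birula stratum in a normal bundle), and the width $\eta_\alpha$ is dictated by the Koszul computation. Once this step is available, the inductive assembly is routine provided the window $[0, \eta_\alpha)$ is used uniformly at each stratum: an extension constructed at stage $c'$ automatically lies in the window for any earlier stage $c < c'$ because the center $\cZ_\alpha$ of a stratum appearing at level $c$ lies in $\cX_{<c'}$, so the weight condition is preserved under further extension. Taking the colimit of these compatible extensions along $\cX = \bigcup_c \cX_{\le c}$ produces the desired $\op{ext} : \Perf(\cX^{\ss}) \to \Perf(\cX)$ with $\res_{\cX^{\ss}} \circ \op{ext} \simeq \id_{\Perf(\cX^{\ss})}$.
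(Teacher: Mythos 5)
Your proposal takes essentially the same approach as the paper: \autoref{thm:splitting} is extracted from the window equivalence $\res_{\cX^{\ss}}\colon \cG^w \xrightarrow{\simeq} \Perf(\cX^{\ss})$ of \autoref{thm:categorical_Kirwan} (the theorem the paper cites from \cite{halpern2015derived,ballard2012variation}) by composing its inverse with the inclusion $\cG^w \hookrightarrow \Perf(\cX)$, and that window equivalence is indeed established stratum by stratum via the Koszul-resolution and weight-bound arguments you outline, then passed to the limit along $\cX = \bigcup_c \cX_{\leq c}$.

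Two imprecisions are worth flagging. First, you assert that the normal bundle $N_{\cS_\alpha/\cX}|_{\cZ_\alpha}$ (the repelling directions) has strictly positive $\bG_m$-weights; it is the \emph{conormal} bundle $N^\vee_{\cS_\alpha/\cX}|_{\cZ_\alpha}$ whose weights are strictly positive, as in \eqref{eqn:SES} --- you later use $N^\vee$ correctly when defining $\eta_\alpha$ and when invoking the Koszul resolution, so the slip is cosmetic, but it should be corrected. Second, full faithfulness of $\cG^w \to \Perf(\cX_{<c})$ is strictly stronger than ``every $F \in \cG^w$ restricting to zero on $\cX_{<c}$ is zero'': the latter is only conservativity. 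What must actually be shown is that the local cohomology supported on the stratum, $R\Gamma_{\cS_\alpha}\bigl(\cX,\RHom(F,G)\bigr)$, vanishes for all $F,G \in \cG^w$, so that mapping spaces on $\cX$ and on $\cX_{<c}$ agree. The Koszul resolution of the pushforward of $\cO_{\cS_\alpha}$ by exterior powers of $N^\vee$, together with the weight constraints $F|_{\cZ_\alpha}, G|_{\cZ_\alpha} \in [w_\alpha, w_\alpha + \eta_\alpha)$, is exactly the computation that gives this vanishing (the ``quantization commutes with reduction'' theorem in the cited references). With these two points repaired, your argument is the paper's.
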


However, as stated \autoref{thm:Kirwan_surjectivity} discusses the structure of the unstable locus as well. In order to categorify the full statement of \autoref{thm:Kirwan_surjectivity}, we must recall the notion of a semiorthogonal decomposition. We say that a pre-triangulated dg-category $\cC$ admits a semiorthogonal decomposition $\cC = \langle \cC_i | i \in I \rangle$ indexed by a totally ordered set if the triangulated category $\op{Ho}(\cC)$ admits a semiorthogonal decomposition $\langle \op{Ho}(\cC_i) | i \in I \rangle$. By definition this means that $\RHom(E_i,E_j) = 0$ for $i > j$ and every object in $\op{Ho}(\cC)$ can be built as an iterated extension of objects of $\cC_i$ for $i$ in some finite subset of $I$. One of the basic results in the theory of semiorthogonal decompositions is that given a semiorthogonal decomposition $\cC = \langle \cC_i | i \in I \rangle$, every $F \in \cC$ can be built from a unique and functorial sequence of extensions.

\begin{ex}
A two term semiorthogonal decomposition $\cC = \langle \cA,\cB \rangle$ means that $\RHom(B,A) = 0$ for any $A \in \cA$ and $B \in \cB$, and for every $F \in \cC$ there is an exact triangle
\[
B \to F \to A \to
\]
with $B \in \cB$ and $A \in \cA$. This triangle is uniquely and functorially associated to $F$, and the assignment $F \mapsto A$ (respectively $F \mapsto B$) is the left (respectively right) adjoint of the inclusion $\cA \hookrightarrow \cC$ (respectively $\cB \hookrightarrow \cC$).
\end{ex}

Let us return to the context of a smooth stack $\cX$ with $\Theta$-stratification $\cX = \cX^{\ss} \cup \cS_0 \cup \cdots \cup \cS_N$. We shall assume that all of the stacks involved are finite type for simplicity. Note that every point of each $\cZ^{\ss}_\alpha$ has a canonical $\bG_m$ mapping to its stabilizer. We can thus decompose the category $\Perf(\cZ^{\ss}_\alpha) = \bigoplus_{w \in \bZ} \Perf(\cZ^{\ss}_\alpha)^w$, where $\Perf(\cZ^{\ss}_\alpha)^w$ is the full subcategory of $\Perf(\cZ^{\ss}_\alpha)$ consisting of complexes whose homology sheaves are all concentrated in weight $w$ with respect to the canonical $\bG_m$ stabilizer at each point. Given a complex $F \in \Perf(\cZ^{\ss}_\alpha)^w$, we say that $F$ has weights \emph{in the window} $[a,b)$ if it lies in the full subcategory $\bigoplus_{w=a}^{b-1} \Perf(\cZ^{\ss}_\alpha)^w$.

\begin{defn}
We define the full subcategories of $\Perf(\cX)$:
\begin{align*}
\cG^w &:= \left\{ F \in \Perf(\cX) | \forall \alpha, F|_{\cZ^{\ss}_{\alpha}} \text{ has weights in the window } [w_\alpha,w_\alpha + \eta_\alpha) \right\} \\
\Perf_{\cX^{\us}}(\cX)^{\geq w} &:= \left\{ F \in \Perf(\cX) | \op{Supp}(F) \subset \cX^{\us} \text{ and } \forall \alpha, F|_{\cZ^{\ss}_{\alpha}} \text{ has weights } \geq w_\alpha \right\} \\
\Perf_{\cX^{\us}}(\cX)^{< w} &:= \left\{ F \in \Perf(\cX) | \op{Supp}(F) \subset \cX^{\us} \text{ and } \forall \alpha, F|_{\cZ^{\ss}_{\alpha}} \text{ has weights } <w_\alpha + \eta_\alpha \right\}
\end{align*}
\end{defn}

The direct sum decomposition of \autoref{thm:Kirwan_surjectivity} can be categorified to a semiorthogonal decomposition of $\Perf(\cX)$ involving the categories above.

\begin{thm}\cites{halpern2015derived,ballard2012variation} \label{thm:categorical_Kirwan}
Let $\cX$ be a smooth finite type algebraic stack with a $\Theta$ stratification $\cX = \cX^{\ss} \cup \cS_0 \cup \cdots \cup \cS_N$. Then for any choice of weights $w = \{w_\alpha\}$ we have an infinite semiorthogonal decomposition
\[
\Perf(\cX) = \langle \Perf_{\cX^{\us}}(\cX)^{<w}, \cG^w, \Perf_{\cX^{\us}}(\cX)^{\geq w} \rangle.
\]
The restriction to $\cX^{\ss}$ defines an equivalence
\[
\res_{\cX^{\ss}} : \cG^w \xrightarrow{\simeq} \Perf(\cX^{\ss}),
\]
and $\Perf_{\cX^{\us}}(\cX)^{<w}$ (respectively $\Perf_{\cX^{\us}}(\cX)^{\geq w}$) further admits an infinite semiorthogonal decomposition whose pieces are identified with $\Perf(\cZ^{\ss}_\alpha)^{v}$ for all $\alpha$ and $v < w_\alpha$ (respectively $\Perf(\cZ^{\ss}_\alpha)^v$ for $v \geq w_\alpha$).
\end{thm}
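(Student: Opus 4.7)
The plan is to proceed by induction on the number of strata, with the single-stratum case as the basic building block. Let $\cS$ denote the closed $\Theta$-stratum in the stratification and $\cU := \cX \setminus \cS$. It suffices first to establish a three-term semiorthogonal decomposition
\[
\Perf(\cX) = \langle \Perf_\cS(\cX)^{<w_\cS},\ \cG_\cS^{w_\cS},\ \Perf_\cS(\cX)^{\geq w_\cS}\rangle,
\]
in which the restriction $\res_\cU : \cG_\cS^{w_\cS} \to \Perf(\cU)$ is an equivalence. Applying the inductive hypothesis to the induced $\Theta$-stratification on $\cU$ (which has one fewer stratum) yields a semiorthogonal decomposition of $\Perf(\cU)$, which transports back to $\cG_\cS^{w_\cS}$ via the equivalence and concatenates with the two outer pieces to give the claimed decomposition of $\Perf(\cX)$. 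The outer pieces $\Perf_{\cX^{\us}}(\cX)^{<w}$ and $\Perf_{\cX^{\us}}(\cX)^{\geq w}$ are correctly recovered because the localization triangle associated to each closed stratum expresses any perfect complex supported on the unstable locus as an iterated extension of complexes supported on individual strata.

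For the single-stratum statement, the first reduction is to a local model. By the local structure of $\Theta$-strata, étale-locally on the center $\cZ^{\ss}$ the stack $\cX$ is equivalent to a quotient $[Y/\bG_m]$ with $Y$ smooth affine, where $Y^{\bG_m}$ corresponds to $\cZ^{\ss}$ and the Bialynicki--Birula attracting locus $Y^+$ corresponds to $\cS$. Since the subcategories in question are defined by conditions that are local on $\cZ^{\ss}$, it suffices to prove the theorem in this local model. The heart of the argument is then the \emph{grade-restriction window} equivalence $\res_\cU : \cG^w \xrightarrow{\sim} \Perf(\cU)$. Full faithfulness reduces to computing $\RHom_\cX(F,G)$ for $F \in \cG^w$ and $G$ a typical generator of $\Perf_\cS(\cX)^{\geq w} \cup \Perf_\cS(\cX)^{<w}$; taking $G$ to be the pushforward from $\cZ^{\ss}$ of an object in $\Perf(\cZ^{\ss})^v$ for some weight $v$ outside the window $[w,w+\eta)$, one resolves $\cO_{\cZ^{\ss}}$ along its inclusion into $\cS$ by a Koszul complex and reduces to $\RHom$ computations on $\cZ^{\ss}$ between $F|_{\cZ^{\ss}}$ and complexes whose weights are shifts of $v$ by the weights of the exterior powers of $N^\dual_{\cZ^{\ss}/\cS}$. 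These all vanish by orthogonality of distinct $\bG_m$-weight spaces, precisely because the window width $\eta$ is chosen to equal the total weight of $\det N^\dual_{\cZ^{\ss}/\cS}$. Essential surjectivity is proved by taking any perfect extension $\tilde F$ of a given $F \in \Perf(\cU)$ to $\cX$ and modifying $\tilde F$ inductively by cones on maps into or out of pushforwards from $\cZ^{\ss}$ of weight-$v$ complexes, in order to kill the out-of-window weights of $\tilde F|_{\cZ^{\ss}}$ one weight at a time, without altering the restriction to $\cU$.

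The main obstacle is the window equivalence itself: pinning down the correct value of $\eta$, verifying the Koszul vanishing, and carrying out the weight-shift procedure while staying inside $\Perf$ (rather than merely $\DCoh$). Once the window equivalence is in place, the refinement of $\Perf_\cS(\cX)^{\geq w}$ (respectively $\Perf_\cS(\cX)^{<w}$) into pieces identified with $\Perf(\cZ^{\ss}_\alpha)^v$ for $v \geq w_\alpha$ (respectively $v < w_\alpha$) follows from the same Koszul filtration, now applied to $\cO_\cS$ as a sheaf on $\cX$: this exhibits any object of $\Perf_\cS(\cX)^{\geq w}$ as an iterated extension of pushforwards of weight-graded objects of $\Perf(\cZ^{\ss}_\alpha)$, with the semiorthogonality between distinct weights following from the same weight-space orthogonality used in full faithfulness.
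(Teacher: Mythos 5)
Your approach follows the ``window'' route of the cited sources \cite{halpern2015derived} and \cite{ballard2012variation}, which is a legitimate way in, but note that the paper itself says the final (non-quotient) form of the theorem is deduced from the derived theorems \autoref{thm:derived_Kirwan} and \autoref{thm:quasi_smooth_case}, whose proofs replace Koszul computations with the formal baric decompositions of \autoref{lem:baric_decomp} and \autoref{lem:baric_decomp2} (using the co-action of $\QC(\Theta)$ on $\QC(\Tstack{\cX})$ and the cotangent-complex identification of \autoref{lem:cotangent}). The formal route is what lets one handle arbitrary smooth algebraic stacks globally, and also singular/derived ones, rather than only quotient stacks.

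Within your own route there is a concrete error in the key numerical input: the window width is \emph{not} the weight of $\det N^\dual_{\cZ^{\ss}/\cS}$. That conormal lives inside the stratum, has \emph{negative} weights (since $\cS$ is the attracting locus), and in general has the wrong magnitude. As the paper records around \eqref{eqn:SES}, \autoref{lem:cotangent}, and the definition of $\eta_\alpha$ preceding \autoref{thm:virtual_localization}, the correct width is $\eta_\alpha = \op{wt}\bigl(\det L^+_\alpha\bigr)$, where $L^+_\alpha$ is the positive-weight part of $L_\cX|_{\cZ^{\ss}_\alpha}$, which for smooth $\cX$ is the conormal $N^\dual_{\cS_\alpha/\cX}\big|_{\cZ^{\ss}_\alpha}$ of the stratum \emph{in the ambient stack}. (For example, for $\bA^3$ with coordinate weights $(+1,+1,-1)$ and the stratum $\{x_1=x_2=0\}$, your formula gives $1$, while the correct $\eta$ is $2$.) Relatedly, the Koszul resolution driving the vanishing should be that of $\cO_{\cS}$ in $\cX$ (controlled by $N^\dual_{\cS/\cX}$), not of $\cO_{\cZ^{\ss}}$ in $\cS$; these involve opposite-sign weight spaces, and it is precisely the positivity of the weights of $N^\dual_{\cS/\cX}|_{\cZ^{\ss}}$ that makes the window work. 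Finally, the reduction to the \'etale-local model $[Y/\bG_m]$ is stated as though automatic, but an existence statement for a semiorthogonal decomposition amounts to the existence of adjoints to the three inclusions, and that is not a condition one can check \'etale-locally without also verifying that the resulting truncation functors glue. You would need to either supply such a descent argument or argue globally as the paper does.
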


The original formulation of this theorem in \cite{halpern2015derived} applies to quotient stacks, where the notion of a $\Theta$-stratification agrees with that of a ``KN-stratification." Some remarks on the case of local quotient stacks appear in \cite{halpern2015remarks}, and the final version is a consequence of the general theorems below. The main theorem of \cite{ballard2012variation} is not formulated as a structure theorem for the full equivariant derived category -- instead it analyzes a certain kind of \emph{elementary} (or \emph{balanced}) variation of GIT quotient and describes how the category $\cG^w$ (and thus the category $\Perf(\cX^{\ss})$) changes under this wall crossing. Matthew Ballard has extended this analysis to elementary wall crossings for smooth stacks which are not explicitly presented as global quotient stacks in \cite{ballard2014wall}. The notion of ``elementary stratum" introduced there is also a special case of a $\Theta$-stratum.

In this note, however, we focus on the generalization of \autoref{thm:categorical_Kirwan} to \emph{singular} stacks. Although \cite{halpern2015derived} contains some statements in the singular case, it became clear while finishing that paper that a much more general theorem holds, but expressing it most naturally requires the language of derived algebraic geometry.

\subsection{Derived Kirwan surjectivity}

In \autoref{thm:derived_Kirwan} below we shall extend \autoref{thm:categorical_Kirwan} to arbitrary stacks with a $\Theta$-stratification, but we must first establish some results and notation.

It is too much to ask for \autoref{thm:splitting} to hold for the category $\Perf(\cX)$ when $\cX$ is singular, because the restriction map $H^\ast(\cX;\bQ) \to H^\ast(\cX^{\ss};\bQ)$ is not always surjective.\footnote{One can consider the quotient of a cone over a projective variety by $\bG_m$.} Instead our general structure theorem applies to $\APerf(\cX) \subset \QC(\cX)$, the full subcategory of the unbounded derived category of quasi-coherent complexes $F$ such that the homology sheaves $H_i(F)$ are coherent and $=0$ for $i\ll 0$. When $\cX$ is ``quasi-smooth" there will be a version for $\DCoh(\cX)$ as well. The key idea will be to use the modular interpretation of a $\Theta$-stratum $\cS \hookrightarrow \cX$ as an open substack of the mapping stack $\Tstack{\cX}$ in order to equip $\cS$ with an alternative derived structure.

\subsubsection{The ``correct" derived structure on a $\Theta$-stratum}

First consider the example of an affine scheme $X = \Spec(R)$ with an action of $\bG_m$, corresponding to a grading on $R$. The $\Theta$-stratum associated to the tautological one parameter subgroup $\lambda(t) = t$ is $Y / \bG_m \hookrightarrow X/\bG_m$, where $Y = \Spec(R / R \cdot R_{>0})$ is the subscheme cut out by functions on $X$ with positive weights. The center of the stratum is $Z \times (\pt / \bG_m)$ where $Z = \Spec(R/ R \cdot (R_{>0}+R_{<0}))$. When $X$ is smooth we can identify the canonical short exact sequence
\begin{equation} \label{eqn:SES}
0 \to (N^\ast_Y X)|_Z \to (\Omega_X^1)|_Z \to (\Omega^1_Y)|_Z \to 0
\end{equation}
with the factorization of $(\Omega_X^1)|_Z$ into its weight eigensheaves $0 \to (\Omega_X^1)|^{>0}_Z \to (\Omega_X^1)|_Z \to (\Omega_X^1)|^{\leq 0}_Z \to 0$, and more generally this fact, that the restriction $(N^\ast_\cS \cX)|_{\cZ^{\ss}}$ has positive weights with respect to the canonical $\bG_m$ stabilizer of $\cZ^{\ss}$, plays a key role in the proof of both \autoref{thm:Kirwan_surjectivity} and its categorification \autoref{thm:categorical_Kirwan}.\footnote{We ask that the reader accept for the moment the importance of the positivity of these weights without justification. We will discuss the details of the results below, at which point it will be clear why this is important.}


When $X$ is singular the sequence \eqref{eqn:SES} is no longer exact. The natural replacement is the exact triangle of derived restrictions of cotangent complexes
\begin{equation} \label{eqn:cotangent_sequence}
L_{\cS/\cX}|_{\cZ}[-1] \to L_{\cX}|_{\cZ} \to L_\cS|_{\cZ} \to,
\end{equation}
Unfortunately the homology sheaves of $(L_{\cS/\cX})|_{\cZ}$ can now fail to have positive weights.

\begin{ex}
Let $X = \Spec(\bC[x_1,\ldots,x_n,y_1,\ldots,y_m] / (f))$ where $f$ is a non-zero polynomial in $(x_1,\ldots,x_n)$ that is homogeneous of weight $d$ for the $\bG_m$ action on $\Spec(\bC[x_i,y_j])$ discussed above. Note that $d$ can be either positive or negative. Then because $f \in (x_1,\ldots,x_n)$ we have $Y = \Spec(\bC[x_i,y_j] / (f,x_i)) = \Spec (\bC[y_j])$, so the cotangent complex of $Y$ is $\cO_Y \cdot dy_1 \oplus \cdots \cO_Y \cdot dy_m$. The cotangent complex of $X$ is the two term complex in cohomological degrees $-1,0$ given by $\cO_X \cdot df \to \bigoplus_i \cO_X \cdot dx_i \oplus \bigoplus_j \cO_X \cdot dy_j$. Therefore applying the exact triangle \eqref{eqn:cotangent_sequence} to the inclusion $\cS = Y/\bG_m \hookrightarrow \cX = X/\bG_m$ we have
\[
L_{\cS/\cX} = \left[ \cO_Y \cdot df \to \bigoplus_i \cO_Y \cdot dx_i \right]
\]
in cohomological degrees $-2,-1$. Therefore, if $f$ had weight $d<0$ to begin with, the (derived) restriction of the relative cotangent complex $L_{\cS/\cX}|_{\cZ}$ no longer has weights $>0$.
\end{ex}


To fix this problem we shall equip $\cS$ with a different derived structure, such that $L_{\cS/\cX}|_{\cZ}$ has positive weights. If $\cX$ is a locally almost finitely presented algebraic derived stack with quasi-affine diagonal, then the \emph{derived} mapping stack $\Tstack{\cX}$ has a cotangent complex by \autoref{thm:Tstack}. If $\ev : \Theta \times \Tstack{\cX} \to \cX$ is the evaluation map and $p : \Theta \times \Tstack{\cX} \to \Tstack{\cX}$ is the projection, then the cotangent complex of the derived mapping stack is $L_{\Tstack{\cX}} \simeq (p_\ast \ev^\ast (L_\cX^\vee))^\vee$. This generalizes the classical observation that if $X$ and $Y$ are $\bC$-schemes with $X$ proper, then first order deformations of a map $f : X \to Y$ are classified by sections of $f^\ast TY$ on $X$, i.e. the tangent space of $\iMap(X,Y)$ is $\Gamma(X,f^\ast TY)$.

\begin{defn}
A \emph{derived} $\Theta$-stratum in a derived algebraic stack (locally almost finitely presented with quasi-affine diagonal) $\cX$ is a closed substack identified with a connected component of the \emph{derived} mapping stack under $\ev_1 : \Tstack{\cX} \to \cX$.
\end{defn}

Note that the underlying classical stack $\Tstack{\cX}^{cl}$ is the classical mapping stack to $\cX^{cl}$, so if $\cS \hookrightarrow \cX$ is a derived $\Theta$-stratum then $\cS^{cl} \hookrightarrow \cX^{cl}$ is a classical $\Theta$-stratum, and conversely if $\cS^{cl} \hookrightarrow \cX^{cl}$ is a classical $\Theta$-stratum, then it underlies a unique derived $\Theta$-stratum. So we can refer to a derived $\Theta$-stratum as a $\Theta$-stratum without ambiguity. One can use the formula $L_{\Tstack{\cX}} \simeq (p_\ast \ev^\ast (L_\cX^\vee))^\vee$ to prove the positivity of the weights of $L_{\cS/\cX}|_{\cZ}$.
\begin{lem} \label{lem:cotangent}
Let $\cS \hookrightarrow \cX$ be a derived $\Theta$-stratum, then there is a canonical equivalence of exact triangles in $\APerf(\cZ)$
\[
\xymatrix@R=7pt{ L_\cX|_{\cZ}^{>0} \ar[r]  \ar[d] & L_\cX|_{\cZ} \ar[r] \ar[d] & L_{\cX}|_\cZ^{\leq 0} \ar[r] \ar[d] & \\ L_{\cS/\cX}[-1]|_{\cZ} \ar[r] & L_{\cX}|_\cZ \ar[r] & L_\cS|_\cZ \ar[r] & }.
\]
\end{lem}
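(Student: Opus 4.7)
The plan is to identify $L_\cS|_{\cZ}$ with the non-positive weight part of $L_\cX|_{\cZ}$ using the mapping-stack cotangent complex formula $L_{\Tstack{\cX}} \simeq (p_\ast \ev^\ast L_\cX^\vee)^\vee$ furnished by \autoref{thm:Tstack}, and then read off the statement about $L_{\cS/\cX}$ from the fundamental triangle $L_{\cS/\cX}[-1]|_{\cZ} \to L_\cX|_{\cZ} \to L_\cS|_{\cZ}$ attached to the closed immersion $\cS \hookrightarrow \cX$.

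First I would use that $\cS$, being a connected component of $\Tstack{\cX}$, is an open substack, so $L_\cS \simeq L_{\Tstack{\cX}}|_\cS$. The canonical section $\sigma: \cZ \to \cS$ is the restriction of $\sigma: \iMap(\pt/\bG_m,\cX) \to \Tstack{\cX}$ induced by the projection $\pi: \Theta \to \pt/\bG_m$, so the universal evaluation pulled back along $\cZ \xrightarrow{\sigma} \cS \subset \Tstack{\cX}$ factors as $\Theta \times \cZ \xrightarrow{\pi \times \id} \pt/\bG_m \times \cZ \xrightarrow{\ev_\cZ} \cX$, where $\ev_\cZ$ is the universal evaluation for $\cZ \hookrightarrow \iMap(\pt/\bG_m,\cX)$. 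Base change along $\sigma$ combined with the projection formula then produces
\begin{equation*}
L_\cS|_\cZ \;\simeq\; \bigl(p_\ast (\pi \times \id)^\ast \ev_\cZ^\ast L_\cX^\vee\bigr)^\vee \;\simeq\; \bigl(q_\ast(\pi_\ast \cO_\Theta \otimes \ev_\cZ^\ast L_\cX^\vee)\bigr)^\vee,
\end{equation*}
where $q: \pt/\bG_m \times \cZ \to \cZ$ is the projection and $p = q \circ (\pi \times \id)$.

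Next I would compute $\pi_\ast \cO_\Theta$ on $\pt/\bG_m$: as a $\bG_m$-representation it is the graded ring $\bigoplus_{n \geq 0}\cO\langle -n\rangle$ (with the sign convention matched to the statement of the lemma). A quasi-coherent complex on $\pt/\bG_m \times \cZ$ is equivalent to a $\bG_m$-graded complex on $\cZ$, and $q_\ast$ picks out the weight-zero component. Writing $u = \ev_1 \circ \sigma: \cZ \to \cX$ for the ``underlying point'' map, the complex $\ev_\cZ^\ast L_\cX^\vee$ is simply $u^\ast L_\cX^\vee$ equipped with the canonical $\bG_m$-weight decomposition. Therefore
\begin{equation*}
L_\cS|_\cZ \;\simeq\; \Bigl(\bigoplus_{w \geq 0}(u^\ast L_\cX^\vee)^{w}\Bigr)^\vee \;\simeq\; (u^\ast L_\cX)^{\leq 0} \;=\; L_\cX|_\cZ^{\leq 0},
\end{equation*}
and the same computation applied to the derivative of $\ev_1: \Tstack{\cX} \to \cX$ (which comes from restricting the universal family along $\{1\} \hookrightarrow \Theta$) identifies the natural map $L_\cX|_\cZ \to L_\cS|_\cZ$ with the projection onto non-positive weights. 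The fiber of this projection is $L_\cX|_\cZ^{>0}$, which the fundamental triangle forces to be canonically identified with $L_{\cS/\cX}[-1]|_\cZ$, and the resulting commutative diagram of triangles is the one claimed.

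The main obstacle is justifying the base-change and projection-formula manipulations in the derived, stacky setting: one has to know that the mapping-stack cotangent formula from \autoref{thm:Tstack} interacts compatibly with restriction along $\sigma$, and that the identification of $L_\cX|_\cZ \to L_\cS|_\cZ$ with the weight-projection is \emph{canonical} rather than only up to an automorphism of $L_\cX|_\cZ^{\leq 0}$. Both follow from the functoriality of the mapping-stack cotangent complex and the compatibility of $\ev_1$ with the universal evaluation, but making this precise requires some care with derived conventions; once these points are settled the lemma reduces to the elementary weight calculation above.
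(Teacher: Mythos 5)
Your proof is correct and executes precisely the approach the paper itself sketches: the text immediately preceding the lemma points to the mapping-stack cotangent formula $L_{\Tstack{\cX}} \simeq (p_\ast \ev^\ast L_\cX^\vee)^\vee$ as the key input, and you use it, combined with restriction along the canonical section $\sigma : \cZ \to \cS$ and the weight computation $\pi_\ast \cO_\Theta \simeq \bigoplus_{n \geq 0} \cO\langle -n \rangle$, to identify $L_\cS|_\cZ$ with $L_\cX|_\cZ^{\leq 0}$ and the restriction map with the weight projection. The caveats you flag at the end (derived base change and projection formula, and canonicity of the identification) are exactly the technical points the paper defers to the follow-up work, so this is the intended argument.
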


\subsubsection{Baric structures}

In addition to the strata $\cS$, we also equip the centers of the strata $\cZ$ with an alternate derived structure coming from the modular interpretation $\cZ \subset \iMap(B\bG_m,\cX)$. A mentioned previously, points of the stack $\cZ$ have a canonical $\bG_m$ in their automorphism groups, both in the classical and derived context. As in the classical case, any object of $F \in \QC(\cZ)$ splits canonically as a direct sum $\bigoplus_{w \in \bZ} F_w$, where the homology sheaves of $F_w$ locally have weight $w$ with respect to this canonical $\bG_m$-action.

\begin{defn}
Let $\cC$ be a pre-triangulated dg-category. Then a \emph{baric decomposition} on $\cC$ is a semiorthogonal decomposition $\cC = \sod{\cC^{<w},\cC^{\geq w}}$ for any $w \in \bZ$, with $\cC^{<w} \subset \cC^{<w+1}$ and $\cC^{\geq w} \subset \cC^{\geq w-1}$. Given a baric decomposition of $\cC$, we let $\radj{w}$ and $\ladj{w}$ denote the projection functors onto $\cC^{\geq w}$ and $\cC^{<w}$ respectively (they are also called the \emph{baric truncation functors}).
\end{defn}

The category $\QC(\cZ)$ has a baric decomposition which splits any $F \in \QC(\cZ)$ into a direct sum of complexes whose homology has weight $<w$ and $\geq w$ respectively, but we shall see that baric decompositions appear in more general settings. Observe that the stack $\Theta$ is a monoidal object in the category of stacks\footnote{In principle some care must be taken to ascribe $\Theta$ the structure of a monoidal object in the symmetric monoidal $\infty$-category of stacks, but we will only need to consider the monoidal structure maps up to homotopy, so we can get away with thinking of $\Theta$ as a monoidal object in the homotopy category of stacks.} where the monoidal product is given by the multiplication map $\bA^1 \times \bA^1 \to \bA^1$, which is equivariant for the group homomorphism $\bG_m \times \bG_m \to \bG_m$ which is also given in coordinates by $(z_1,z_2) \mapsto z_1 z_2$. Pre-composition $(t',f(t)) \mapsto f(t' \cdot t)$ defines an action of this monoidal object on the stack $\Tstack{\cX}$, giving an action map $a : \Theta \times \Tstack{\cX} \to \Tstack{\cX}$ in addition to the projection map $\pi : \Theta \times \Tstack{\cX} \to \Tstack{\cX}$.

\begin{lem} \label{lem:baric_decomp}
Let $\cS$ be a connected component of $\Tstack{\cX}$. Then $\QC(\cS)$ admits a baric decomposition where the baric truncation functors are defined by the exact triangle\footnote{Here $t$ is the canonical coordinate on $\Theta = \Spec (\bC[t]) / \bG_m$, which has weight $-1$, and $\cO_\Theta[t^{-1}]$ corresponds to the graded $\bC[t]$-module $\bC[t^\pm]$.}
\[
\xymatrix{ \pi_\ast (\cO_\Theta\langle w \rangle \otimes a^\ast(F)) \ar[r]^{t^w} \ar@{=}[d] & \pi_\ast(\cO_\Theta[t^{-1}] \otimes a^\ast(F)) \ar[r] \ar[d]^\simeq & \pi_\ast((\cO_\Theta[t^{-1}] / \cO_\Theta \cdot t^w) \otimes a^\ast(F)) \ar[r] \ar@{=}[d] & \\ \radj{w}(F) \ar[r] & F \ar[r] & \ladj{w}(F) \ar[r] & }.
\]
A complex $F \in \APerf(\cS)$ lies in $\QC(\cS)^{\geq w}$ (respectively $\QC(\cS)^{<w}$) if and only if the homology sheaves of $F|_{\cZ^{\ss}}$ are concentrated in weight $\geq w$ (respectively $<w$), where $\cZ^{\ss} \subset \iMap(B\bG_m, \cX)$ is the center of $\cS$.
\end{lem}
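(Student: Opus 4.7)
The plan is to deduce the baric decomposition of $\QC(\cS)$ from the classical weight-based baric decomposition on $\QC(\Theta)$, using the fact that the monoidal action map $a$ exhibits $\QC(\cS)$ as a left module over the symmetric monoidal $\infty$-category $\QC(\Theta)$.

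As a first step I would identify the middle term of the stated triangle with $F$. Let $j : \pt \hookrightarrow \Theta$ denote the open inclusion of the complement of $B\bG_m \hookrightarrow \Theta$; then $j_\ast \cO_\pt \simeq \cO_\Theta[t^{-1}]$, and the open point $\pt$ is the identity element of the monoid $\Theta$. Hence both $a \circ (j \times \id_\cS)$ and $\pi \circ (j \times \id_\cS)$ equal the canonical projection $\pt \times \cS \simeq \cS \to \cS$, and base change along the open immersion yields a natural equivalence $\pi_\ast(\cO_\Theta[t^{-1}] \otimes a^\ast F) \simeq F$. The short exact sequence of sheaves on $\Theta$ appearing in the statement (which corresponds to a short exact sequence of graded $\bC[t]$-modules) then gives the claimed exact triangle after tensoring with $a^\ast F$ and applying $\pi_\ast$; this defines the candidate truncation functors $\radj{w}$ and $\ladj{w}$.

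The monoidal structure on $\Theta$ together with the compatibility of $a$ with it makes $\QC(\cS)$ into a left module over $\QC(\Theta)$ via $M \cdot F := \pi_\ast(M \otimes a^\ast F)$, for which $\cO_\Theta[t^{-1}]$ is the unit by the previous paragraph. The category of graded $\bC[t]$-modules admits a classical weight-based baric decomposition $\QC(\Theta) = \sod{\QC(\Theta)^{<w}, \QC(\Theta)^{\geq w}}$ in which the truncations of the unit $\cO_\Theta[t^{-1}]$ are precisely $\cO_\Theta\langle w\rangle$ and $\cO_\Theta[t^{-1}]/\cO_\Theta \cdot t^w$. By $\QC(\Theta)$-linearity this induces the desired baric decomposition on $\QC(\cS)$ whose truncation functors are the $\radj{w}$, $\ladj{w}$ defined above; semi-orthogonality in $\QC(\cS)$ reduces to the analogous classical statement for graded $\bC[t]$-modules, and the nested inclusions $\QC(\cS)^{<w} \subset \QC(\cS)^{<w+1}$ and $\QC(\cS)^{\geq w} \subset \QC(\cS)^{\geq w-1}$ follow from the corresponding inclusions on $\QC(\Theta)$.

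For the weight characterization, I would use that points of $\cZ^{\ss}$ are by definition maps $\Theta \to \cX$ that factor through the projection $\Theta \to B\bG_m$. Consequently on $\Theta \times \cZ^{\ss}$ the action $a$ factors through the canonical $\bG_m$-structure on $\cZ^{\ss}$ in the stabilizers of its points, so restriction to $\cZ^{\ss}$ commutes with $\radj{w}$ and $\ladj{w}$, and the induced truncations on $\QC(\cZ^{\ss})$ are the classical ones separating by $\bG_m$-weight. This yields the claimed characterization of $\APerf(\cS)^{\geq w}$ and $\APerf(\cS)^{<w}$. The main subtlety I anticipate is this last step: making the factorization of $a|_{\Theta \times \cZ^{\ss}}$ through the canonical $\bG_m$-structure precise as a derived statement, and verifying that restriction to $\cZ^{\ss}$ commutes with $\pi_\ast$ without derived correction terms; the $\QC(\Theta)$-module enrichment used in the previous paragraph should be essentially formal given the monoidal description of $a$ recalled just before the lemma.
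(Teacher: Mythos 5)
Your approach is essentially the same as the paper's: the paper also asserts that \autoref{lem:baric_decomp} follows formally from the monoidal structure of $\Theta$ and its action on $\cS$, phrased there as $\QC(\Theta)$ being a co-monoidal object (via $m^\ast$) of which $\QC(\cS)$ is a co-module (via $a^\ast$), and it explicitly defers the details to a forthcoming reference rather than giving a proof. Your framing via a $\QC(\Theta)$-module structure under the convolution product of the correspondence $\cS \xleftarrow{\pi}\Theta\times\cS\xrightarrow{a}\cS$, with unit $j_\ast\cO_\pt\simeq\cO_\Theta[t^{-1}]$, is the adjoint-dual way of saying the same thing, and your identification of the middle term and the truncation triangle is correct. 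The one sentence that compresses the real content is ``by $\QC(\Theta)$-linearity this induces the desired baric decomposition'': what actually needs to be verified is that $\cO_\Theta\langle w\rangle$ and $\cO_\Theta[t^{-1}]/\cO_\Theta\langle w\rangle$ are convolution-orthogonal idempotents in $\QC(\Theta)$ (e.g.\ $\cO_\Theta\langle w\rangle\ast\cO_\Theta\langle w\rangle\simeq\cO_\Theta\langle w\rangle$ and $\cO_\Theta\langle w\rangle\ast\bigl(\cO_\Theta[t^{-1}]/\cO_\Theta\langle w\rangle\bigr)\simeq 0$), and that base change and the projection formula transport these idempotents to idempotent endofunctors of $\QC(\cS)$ with the required adjunction giving semi-orthogonality; this is precisely the content the paper defers to \cite{halpern2016derived}.
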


\begin{rem}
Note that the baric truncation functors preserves the subcategories $\Perf(\cS)$ and $\APerf(\cS)$, inducing baric decompositions of these categories as well.
\end{rem}

A version of this lemma is proved via concrete methods in \cite{halpern2015derived} for a classical $\Theta$-stratum in a classical quotient stack and in \cite{halpern2015remarks} for certain derived quotient stacks (using sheaves of CDGA's). In fact, \autoref{lem:baric_decomp} admits a purely formal proof which works for any stack $\cS$ with an action of the monoidal object $\Theta$, dramatically simplifying the previous proofs. Note that the monoidal structure on $\Theta$ equips the $\infty$-category $\QC(\Theta)$ with the structure of a co-monoidal object in the homotopy category of stable presentable $\infty$-categories. Then in fact any stable presentable $\infty$-category with a co-action of $\QC(\Theta)$ in the homotopy category of stable presentable $\infty$-categories, such as $\QC(\cS)$, admits a baric decomposition whose truncation functors are given by the formula of \autoref{lem:baric_decomp}. This will be explained in \cite{halpern2016derived}.

Another important baric structure arises in the following
\begin{lem} \label{lem:baric_decomp2}
Let $i : \cS \hookrightarrow \cX$ be a $\Theta$-stratum. Then there is a unique baric decomposition
\[
\APerf_\cS(\cX) = \sod{\APerf_\cS(\cX)^{<w},\APerf_\cS(\cX)^{\geq w}}
\]
such that $i_\ast : \APerf(\cS) \to \APerf_\cS(\cX)$ intertwines the baric truncation functors with the baric truncation functors on $\APerf(\cS)$ induced by \autoref{lem:baric_decomp}.\footnote{$\APerf_\cS(\cX)^{\geq w}$ consists of complexes in $\APerf_\cS(\cX)$ such that $i^\ast (F) \in \APerf(\cS)^{\geq w}$ and $\APerf_\cS(\cX)^{<w}$ consists of complexes which land in $\QC(\cS)^{<w}$ under the functor $i^{\QC,!} : \QC(\cX) \to \QC(\cS)$ right adjoint to $i_\ast$.} Furthermore, we can identify $\APerf_\cS(\cX)^w$ with the essential image of the fully faithful functor $i_\ast \pi^\ast : \APerf(\cZ)^w \to \APerf(\cX)$.
\end{lem}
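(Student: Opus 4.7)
The strategy is to transport the baric decomposition of Lemma~\ref{lem:baric_decomp} across the closed immersion $i$, with the positivity of $L_{\cS/\cX}|_\cZ$ from Lemma~\ref{lem:cotangent} doing the essential work. I take $\APerf_\cS(\cX)^{\geq w}$ and $\APerf_\cS(\cX)^{<w}$ to be defined by the $i^*$ and $i^{QC,!}$ conditions from the footnote, and verify the semi-orthogonality, the existence of truncation triangles, and the claimed compatibilities.

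The first step is to check that $i_*$ preserves the weight conditions. For $G \in \APerf(\cS)^{\geq w}$, the object $i^* i_* G$ is computed by the derived base change $\cO_\cS \otimes^L_{\cO_\cX} \cO_\cS$, which locally admits a Koszul-type filtration whose associated graded is $\op{Sym}_{\cO_\cS}(L_{\cS/\cX}[1])$. By Lemma~\ref{lem:cotangent}, $L_{\cS/\cX}[1]|_\cZ$ is concentrated in strictly positive $\bG_m$-weights, so this symmetric algebra restricted to $\cZ$ lives in non-negative weights, with weight-$0$ summand equal to $\cO_\cZ$. Hence $i^* i_* G|_\cZ$ retains weights $\geq w$, giving $i_* G \in \APerf_\cS(\cX)^{\geq w}$. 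The $<w$ compatibility is automatic from $i^{QC,!} i_* G \simeq G$.

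For the semi-orthogonality, I reduce to the case $F = i_* H$ with $H \in \APerf(\cS)^{\geq w}$ by locally filtering $F \in \APerf_\cS(\cX)^{\geq w}$ via the nilpotent thickening of $\cS^{cl}$ inside $\cX^{cl}$; since orthogonality is preserved under iterated extensions, it suffices to check $\RHom_\cX(i_* H, G) = 0$ for $G \in \APerf_\cS(\cX)^{<w}$, which follows from the adjunction $\RHom_\cX(i_* H, G) \simeq \RHom_\cS(H, i^{QC,!} G)$ together with the semi-orthogonality of Lemma~\ref{lem:baric_decomp}. The truncation triangles on $\APerf_\cS(\cX)$ are constructed analogously: apply Lemma~\ref{lem:baric_decomp} to each subquotient of the filtration to obtain a compatible system of truncations, and assemble by naturality. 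Uniqueness of the baric decomposition is then automatic from the semi-orthogonality axioms, and the intertwining by $i_*$ is built into the construction.

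The final identification $\APerf_\cS(\cX)^w \simeq i_* \pi^* \APerf(\cZ)^w$ uses two observations: first, $\pi^* : \APerf(\cZ)^w \to \APerf(\cS)^w$ is an equivalence with quasi-inverse $\sigma^*$, because any $\Theta$-equivariant complex on $\cS$ of homogeneous weight $w$ is forced to be pulled back from $\cZ$ by the contractivity of the $\bG_m$-action; second, on weight-$w$ pieces the $\op{Sym}(L_{\cS/\cX}[1])$ correction to $i^* i_*$ contributes only in strictly higher weights, so $i_* : \APerf(\cS)^w \to \APerf_\cS(\cX)^w$ is both fully faithful and essentially surjective. The principal technical obstacle throughout is justifying the various derived-categorical identifications (the Koszul computation of $i^* i_* G$, the filtration of $\APerf_\cS(\cX)$ by the image of $i_*$, and the compatibility of the two baric structures) at the level of generality of arbitrary derived stacks with quasi-affine diagonal; the positivity of Lemma~\ref{lem:cotangent} is the essential input without which none of the weight-bookkeeping arguments would go through.
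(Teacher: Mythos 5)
The paper itself does not give a proof of this lemma; it only flags the crucial role of the positivity statement $L_{\cS/\cX}\in\APerf(\cS)^{\geq 1}$ from \autoref{lem:cotangent} and defers the argument to \cite{halpern2015derived}, \cite{halpern2015remarks}, and the forthcoming \cite{halpern2016derived}. You correctly identify this positivity as the essential input, and the conormal filtration on $i^*i_*$ is indeed the right tool to make the weight bookkeeping work: you should write the associated graded as $\bigoplus_n \op{Sym}^n(L_{\cS/\cX})$ rather than $\op{Sym}(L_{\cS/\cX}[1])$ (the shift is already absorbed into the cotangent complex of the closed immersion), but since cohomological shift does not change $\bG_m$-weight the conclusion that $\gr^n$ contributes in weights $\geq n$ is unchanged.

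The genuine gap is in your devissage for semi-orthogonality and for constructing the truncation triangles. You claim to reduce $F\in\APerf_{\cS}(\cX)^{\geq w}$ to the case $F=i_*H$ with $H\in\APerf(\cS)^{\geq w}$ by filtering along nilpotent thickenings of $\cS^{cl}$. The subquotients of such a filtration are pushforwards from $\cS$, but the hypothesis $i^*F\in\APerf(\cS)^{\geq w}$ is a condition on the derived pullback of the \emph{total} object, and $\APerf(\cS)^{\geq w}$ is not closed under passage to the graded pieces of an arbitrary filtration (a triangle $A\to B\to C$ with $B$ of weight $\geq w$ can have $A$ and $C$ both carrying weight-$(<w)$ pieces that cancel). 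So the reduction to the pushforward case is not justified, and the same issue infects your construction of the truncation triangles, which you assemble by the same devissage. A cleaner route --- and I believe closer to what the references carry out --- is to first build the baric truncation functors on $\APerf_\cS(\cX)$ directly (for instance by passing to the formal completion $\widehat{\cX}_\cS$, which inherits the $\Theta$-action on $\cS$ and hence a $\QC(\Theta)$-comodule structure on its category of sheaves, so that the formal argument after \autoref{lem:baric_decomp} applies), and only then verify the orthogonality, the weight characterization from the footnote, and the identification $\APerf_\cS(\cX)^w\simeq i_*\pi^*\APerf(\cZ)^w$. Your observations about the conormal filtration and about $\pi^*$ being an equivalence on weight-$w$ pieces are the right ingredients for that verification step, but they cannot by themselves conjure the truncation triangles.
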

This lemma is where one needs the positivity of the weights of relative cotangent complex -- more precisely, one needs $L_{\cS/\cX} \in \APerf(\cS)^{\geq 1}$, which holds for the derived $\Theta$-stratum by \autoref{lem:cotangent}.

\subsubsection{The main theorem}

Let $\cX = \cX^{\ss} \bigcup_\alpha \cS_\alpha$ be a derived stack with a $\Theta$-stratification, and denote the locally closed immersions $i_\alpha : \cS_\alpha \hookrightarrow \cX$. We choose $w_\alpha \in \bZ$ for each $\alpha$ in the indexing set of the stratification. We define
\begin{gather*}
\APerf(\cX)^{\geq w} := \left\{ F \in \APerf(\cX) \left| \forall \alpha, i_\alpha^\ast F \in \APerf(\cS_\alpha)^{\geq w_\alpha} \right. \right\} \\
\APerf(\cX)^{<w} := \left\{ F \in \APerf(\cX) \left| \forall \alpha, i_\alpha^{\QC,!} F \in \QC(\cS_\alpha)^{< w_\alpha} \right. \right\}.
\end{gather*}
Then we further define $\cG^w := \APerf(\cX)^{\geq w} \cap \APerf(\cX)^{<w}$, $\APerf_{\cX^{\us}}(\cX)^{\geq w} := \APerf_{\cX^{\us}}(\cX) \cap \APerf(\cX)^{\geq w}$, and $\APerf_{\cX^{\us}}(\cX)^{< w} := \APerf_{\cX^{\us}}(\cX) \cap \APerf(\cX)^{< w}$.
\begin{thm}\footnote{A proto-version of this theorem was established for derived global quotient stacks in \cite{halpern2015remarks}, following a more concrete approach using CDGA's to define alternate derived structures on the strata. Here we emphasize that the theory of $\Theta$-stratifications offers a much more natural approach, which is currently work in progress and will be discussed in detail in \cite{halpern2016derived}.}\label{thm:derived_Kirwan}
There are semiorthogonal decompositions
\begin{gather*}
\APerf_{\cX^{\us}}(\cX) = \sod{\APerf_{\cX^{\us}}(\cX)^{<w},\APerf_{\cX^{\us}}(\cX)^{\geq w}}, \text{ and} \\
\APerf(\cX) = \sod{\APerf_{\cX^{\us}}(\cX)^{<w}, \cG^w,\APerf_{\cX^{\us}}(\cX)^{\geq w}}.
\end{gather*}
Furthermore the restriction functor induces an equivalence $\cG^w \simeq \APerf(\cX^{\ss})$.
\end{thm}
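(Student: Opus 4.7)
The plan is to prove both decompositions by induction over the $\Theta$-stratification, reducing to the case of a single closed $\Theta$-stratum $i : \cS \hookrightarrow \cX$ with open complement $j : \cU \hookrightarrow \cX$. The filtration by the open substacks $\cX_{\leq c}$ adds one union of strata at a time, and the decompositions glue across connected components of strata. In the single-stratum case the two-term decomposition $\APerf_{\cX^{\us}}(\cX) = \sod{\APerf_{\cX^{\us}}(\cX)^{<w},\APerf_{\cX^{\us}}(\cX)^{\geq w}}$ is immediate from \autoref{lem:baric_decomp2}, since $\APerf_{\cX^{\us}}(\cX) = \APerf_\cS(\cX)$.

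For the three-term decomposition, I would construct the defining filtration of an arbitrary $F \in \APerf(\cX)$ by hand. Using the right-adjoint functor $i^{\QC,!}$, the complex $i^{\QC,!} F$ has a canonical baric splitting in $\QC(\cS)$ from \autoref{lem:baric_decomp}. Extract the $\APerf_{\cX^{\us}}(\cX)^{<w}$ component by pushing the $\ladj{w}$-part of $i^{\QC,!} F$ forward along $i_*$ and mapping into $F$; let $F'$ be the cofiber, which then has $i^{\QC,!} F' \in \QC(\cS)^{\geq w}$. Next, apply the dual procedure using the left-adjoint $i^*$ and the $\radj{w}$-part of $i^* F' \in \APerf(\cS)$: this produces an $\APerf_{\cX^{\us}}(\cX)^{\geq w}$-piece sitting inside $F'$ whose quotient lies in $\cG^w$ by construction. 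Semiorthogonality of the three pieces follows from the adjunctions $(i^*, i_*, i^{\QC,!})$ combined with the baric orthogonality in \autoref{lem:baric_decomp2}: for instance, for $G \in \cG^w$ and $A \in \APerf_{\cX^{\us}}(\cX)^{<w}$ one rewrites $\RHom_\cX(G, A)$ via adjunction as a mapping space in $\APerf(\cS)$ between objects in complementary baric windows, which vanishes.

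The equivalence $\res : \cG^w \xrightarrow{\simeq} \APerf(\cU)$ is the categorical content of Kirwan surjectivity. Fully-faithfulness follows formally from the semiorthogonal decomposition above. For essential surjectivity, given $F \in \APerf(\cU)$ one chooses any almost perfect extension $\widetilde F \in \APerf(\cX)$ along $j$, which exists by standard approximation results for quasi-compact open immersions into locally almost finitely presented derived stacks, and then applies the two baric corrections above to land in $\cG^w$; the restriction to $\cU$ is unchanged because both corrections are supported on $\cS$.

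The main obstacle is controlling the interaction between the two truncations: after correcting the $i^{\QC,!}$-weights one must not spoil the $i^*$-weight condition, and vice versa. This is where \autoref{lem:cotangent} plays the decisive role. The positivity $L_{\cS/\cX} \in \APerf(\cS)^{\geq 1}$ means that the deformation theory of $\cS$ inside $\cX$ lives entirely in strictly positive weights, which forces the discrepancy between $i^*$ and $i^{\QC,!}$ (controlled by the conormal complex) to shift baric weights in a single direction. Consequently the two corrections commute on associated graded and the composite genuinely lands in $\cG^w$. Without the \emph{derived} structure on the $\Theta$-stratum $\cS$ this positivity can fail, as the example after \autoref{lem:cotangent} illustrates, and the theorem would break; the modular interpretation of $\cS$ as a connected component of $\Tstack{\cX}$ is thus essential throughout.
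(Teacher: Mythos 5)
The paper does not actually contain a proof of \autoref{thm:derived_Kirwan}: the footnote to the theorem explicitly defers the details to the forthcoming \cite{halpern2016derived}, and the surrounding discussion only supplies the supporting \autoref{lem:cotangent}, \autoref{lem:baric_decomp}, and \autoref{lem:baric_decomp2}. So there is no paper proof to compare against; I can only assess whether your sketch is internally coherent and consistent with what the paper does announce. Your overall strategy is sound and agrees with what the surrounding text strongly suggests: reduce by induction on the stratification to one closed $\Theta$-stratum, get the two-term SOD from \autoref{lem:baric_decomp2}, build the middle piece by correcting $F$ with two baric truncations, and use the positivity $L_{\cS/\cX}|_{\cZ} \in \APerf(\cS)^{\geq 1}$ of \autoref{lem:cotangent} as the engine that makes the corrections compatible. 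You correctly single out the derived structure on $\cS$ as essential.

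There is, however, a concrete directional error in the construction. With the paper's convention $\sod{\cA,\cB}$ (triangle $B \to F \to A$ with $\cB$ the ``sub'' piece), the rightmost factor $\APerf_{\cX^{\us}}(\cX)^{\geq w}$ must be extracted as a \emph{subobject} of $F$ and the leftmost factor $\APerf_{\cX^{\us}}(\cX)^{<w}$ must appear as a \emph{quotient}. You instead try to map an $\APerf^{<w}$-piece into $F$. Worse, the map you propose does not exist: by \autoref{lem:baric_decomp} the baric triangle is $\radj{w}(G) \to G \to \ladj{w}(G)$, so $\ladj{w}(i^{\QC,!}F)$ receives a map from $i^{\QC,!}F$ but does not map to it, and hence $i_*\ladj{w}(i^{\QC,!}F)$ has no natural arrow to $F$ via the counit. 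The natural first step is the opposite one: form $i_*\radj{w}(i^{\QC,!}F) \to i_*i^{\QC,!}F \to F$ and take the cofiber, or dually form $F \to i_*\ladj{w}(i^*F)$ and take the fiber. Relatedly, the follow-up assertion that the cofiber $F'$ satisfies $i^{\QC,!}F' \in \QC(\cS)^{\geq w}$ is inconsistent with removing a low-weight piece, so the two halves of the sentence cannot both be right.

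Beyond that sign issue, the sentence ``the two corrections commute on associated graded'' is doing all of the real work and is asserted rather than argued. The actual difficulty is that after the $i^{\QC,!}$-correction, the extracted piece must be verified to lie in $\APerf_{\cX^{\us}}(\cX)^{\geq w}$, which is a condition on $i^*$ of that piece; and $i^*i_*$ is not the identity in the derived setting. This is exactly where one must invoke the filtration of $i^*i_*$ by wedge powers of $L_{\cS/\cX}[-1]$ together with its positivity (\autoref{lem:cotangent}), and where one must also explain why $\radj{w}i^{\QC,!}$ of an almost perfect complex is again almost perfect (a priori $i^{\QC,!}$ only lands in $\QC$). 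Similar care is needed for the second correction. None of this is insurmountable, and it is exactly the content that \autoref{lem:baric_decomp2} is packaging; but as written your step~2 is a gap rather than a proof. Also, the extension-of-$\APerf$-objects along $j$ that you use for essential surjectivity of $\res$ is a nontrivial coherent approximation statement and deserves a citation rather than being called ``standard.'' The semiorthogonality argument you sketch, by contrast, is essentially correct once one uses the identification from \autoref{lem:baric_decomp2} of $\APerf_\cS(\cX)^v$ with the essential image of $i_*\pi^*: \APerf(\cZ)^v \to \APerf(\cX)$ to reduce to mapping spaces on $\cS$.
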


This theorem is philosophically interesting for its lack of hypotheses: it shows that by working with the category $\APerf$ instead of the category $\Perf$ the statement of \autoref{thm:splitting} on the existence of a splitting of the restriction functor $\APerf(\cX) \to \APerf(\cX^{\ss})$ holds verbatim. On the other hand, the category $\APerf$ does not tend to de-categorify to any non-trivial invariants.\footnote{For instance, $K_0(\APerf(\cX)) = 0$ by a categorical version of Mazur's trick: For any $F \in \APerf(\cX)$, the complex $G := \bigoplus_{n\geq 0} F[2n]$ also lies in $\APerf$, and the identity $G = F \oplus G[2]$ implies that $[G] = [F] + [G]$ in $K$-theory and hence $[F]=0$.}

\autoref{thm:derived_Kirwan} has applications of its own, but the theorem's topological implications come from a modification which holds under more restrictive hypotheses.
\begin{defn}
A derived algebraic stack $\cX$ is quasi-smooth if $L_\cX$ is perfect and has tor-amplitude in $[-1,1]$.
\end{defn}

\begin{ex}
If $\cX_0,\cX_1 \to \cY$ are maps between smooth stacks, then the derived fiber product $\cX' := \cX_0 \times_{\cY} \cX_1$ is quasi-smooth. 
%
A special case is the ``derived zero locus" of a section $s$ of a locally free sheaf $V$ on a smooth stack $\cX$, which is by definition the derived intersection of $s$ with the zero section of $\op{Tot}_\cX(V)$. Under the projection to $\cX$ we can describe this more concretely as the relative derived $\Spec$ of the locally free sheaf of CDGA's $\op{Sym}_{\cO_\cX}(V^\dual[1])$ over $\cX$ with differential $V^\dual \to \cO_\cX$ given by the section $s$.
\end{ex}

\begin{ex}
If $S$ is a smooth surface, then $\cX = \stackCoh(S)$ is a quasi-smooth derived stack, because the fiber of the cotangent complex $L_\cX$ at a point $[E] \in \stackCoh(S)$ is
\[
L_{\cX,[E]} \simeq \RHom_S(E,E[1])^\dual \simeq \RHom_S(E,E)^\dual [-1].
\]
$\RHom$ between coherent sheaves has homology in positive cohomological degree only, so this combined with Serre duality $\RHom_S(E,E)^\dual \simeq \RHom(E,E\otimes \omega_S[2])$ implies that $L_{\cX,[E]}$ has homology in degree $-1,0,$ and $1$. Because $L_\cX \in \APerf(\stackCoh(S))$, this implies that $L_{\cX}$ is perfect with tor-amplitude in $[-1,1]$, hence $\stackCoh(S)$ is quasi-smooth.
\end{ex}

\begin{thm} \label{thm:quasi_smooth_case}
Let $\cX$ be a quasi-smooth derived algebraic stack with a $\Theta$-stratification $\cX = \cX^{\ss} \cup \bigcup_\alpha \cS_\alpha$, and assume that
\begin{itemize}
\item[\namedlabel{eqn:obstruction_weights}{$(\dagger)$}] for every point $f : \Spec(\bC) \to \cZ_\alpha^{\ss}$, the weights of $H_1(f^\ast L_{\cX})$ are $\geq 0$ with respect to the canonical $\bG_m$ acting on $f$.
\end{itemize}
Then for any choice of weights $\{w_\alpha\}$ the semiorthogonal decomposition of \autoref{thm:derived_Kirwan} induces a semiorthogonal decomposition
\[
\DCoh(\cX) = \sod{\DCoh_{\cX^{\us}}(\cX)^{<w}, \cG^w \cap \DCoh(\cX), \DCoh_{\cX^{\us}}(\cX)^{\geq w}},
\]
where the restriction functor induces an equivalence $\cG^w \cap \DCoh(\cX) \simeq \DCoh(\cX^{\ss})$. Furthermore, the baric decomposition of \autoref{lem:baric_decomp2} induces an infinite semiorthogonal decomposition of $\DCoh_{\cX^{\us}}(\cX)^{<w}$ (respectively $\DCoh_{\cX^{\us}}(\cX)^{\geq w}$) whose pieces are identified with $\DCoh(\cZ_\alpha^{\ss})^{v}$ for all $\alpha$ and $v<w_\alpha$ (respectively $v \geq w_\alpha$).
\end{thm}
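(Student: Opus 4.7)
The plan is to deduce Theorem \ref{thm:quasi_smooth_case} from Theorem \ref{thm:derived_Kirwan} by showing that the three-term semiorthogonal decomposition of $\APerf(\cX)$ restricts to a semiorthogonal decomposition of the full subcategory $\DCoh(\cX) \subset \APerf(\cX)$. Any SOD $\cC = \sod{\cC_1,\cC_2,\cC_3}$ restricts to a full subcategory $\cD \subset \cC$ if and only if each of the three SOD projection functors preserves $\cD$. Thus it suffices to show that for every $F \in \DCoh(\cX)$, each projection onto $\APerf_{\cX^{\us}}(\cX)^{<w}$, $\cG^w$, and $\APerf_{\cX^{\us}}(\cX)^{\geq w}$ lies in $\DCoh(\cX)$, and that the restriction equivalence $\cG^w \simeq \APerf(\cX^{\ss})$ restricts to an equivalence $\cG^w \cap \DCoh(\cX) \simeq \DCoh(\cX^{\ss})$.

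The first step is a local analysis near each $\Theta$-stratum. By \autoref{lem:cotangent}, $L_\cX|_{\cZ_\alpha^{\ss}}$ splits as a sum of its positive-weight part $L_{\cS_\alpha/\cX}[-1]|_{\cZ_\alpha^{\ss}}$ and its non-positive-weight part $L_{\cS_\alpha}|_{\cZ_\alpha^{\ss}}$. Quasi-smoothness bounds $L_\cX$ in tor-amplitude $[-1,1]$, while condition \ref{eqn:obstruction_weights} constrains the obstruction piece $H_1(L_\cX|_{\cZ_\alpha^{\ss}})$ to weights $\geq 0$; combined with the weight splitting this forces both $L_{\cS_\alpha/\cX}$ and $L_{\cS_\alpha}$ to be perfect complexes of bounded tor-amplitude. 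In particular, $\cS_\alpha$ is quasi-smooth and $i_\alpha : \cS_\alpha \hookrightarrow \cX$ is a quasi-smooth closed immersion, so a local Koszul resolution shows $i_{\alpha,\ast}$ preserves $\DCoh$. Furthermore, the baric truncation functors on $\APerf_{\cS_\alpha}(\cX)$ of \autoref{lem:baric_decomp2}, computed via $\pi_\ast(\cO_\Theta\langle w\rangle \otimes a^\ast(-))$, preserve $\DCoh_{\cS_\alpha}(\cX)$: the weight pieces of a coherent complex on $\cZ_\alpha^{\ss}$ are themselves coherent, and the bounded cohomological amplitude is controlled by the perfectness of $L_{\cS_\alpha/\cX}$. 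These observations give preservation of $\DCoh$ under the two unstable SOD projections and thereby the semiorthogonal decomposition of $\DCoh_{\cX^{\us}}(\cX)^{<w}$ and $\DCoh_{\cX^{\us}}(\cX)^{\geq w}$ into pieces identified with $\DCoh(\cZ_\alpha^{\ss})^v$.

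The main obstacle, and the heart of the argument, is to show that the canonical extension $\APerf(\cX^{\ss}) \xrightarrow{\sim} \cG^w$ of \autoref{thm:derived_Kirwan} sends $\DCoh(\cX^{\ss})$ into $\DCoh(\cX)$. For $F \in \DCoh(\cX^{\ss})$, the extension $\tilde{F}$ is automatically in $\APerf(\cX)$ with bounded homology over $\cX^{\ss}$, so one must bound the amplitude of $i_\alpha^\ast \tilde{F}$ and $i_\alpha^{\QC,!} \tilde{F}$ on each stratum. The window condition confines their $\bG_m$-weights on $\cZ_\alpha^{\ss}$ to $[w_\alpha, w_\alpha + \eta_\alpha)$, and the quasi-smoothness of $i_\alpha$ established above bounds cohomological amplitude. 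The essential input of \ref{eqn:obstruction_weights} enters through the Serre-duality relation $i_\alpha^{\QC,!}(-) \simeq i_\alpha^\ast(-) \otimes \det(L_{\cS_\alpha/\cX}[-1])^\vee[d_\alpha]$: the weight of this twist — the sum of positive weights of $L_{\cS_\alpha/\cX}[-1]|_{\cZ_\alpha^{\ss}}$ — coincides with the window width $\eta_\alpha$ precisely under condition \ref{eqn:obstruction_weights}, making the grade-restriction window simultaneously control both restrictions and thereby forcing $\tilde{F} \in \DCoh(\cX)$. Carrying this interaction out rigorously, along with the verification that the restriction equivalence is compatible with the $\DCoh$ subcategories on both sides, is the technical content of \cite{halpern2016derived}.
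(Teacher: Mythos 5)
The paper states \autoref{thm:quasi_smooth_case} without proof, deferring to the announced work \cite{halpern2016derived} (with a proto-version via CDGAs in \cite{halpern2015remarks}), so there is no paper argument to compare against line by line. Your overall plan --- restrict the semiorthogonal decomposition of $\APerf(\cX)$ from \autoref{thm:derived_Kirwan} to $\DCoh(\cX)$ by checking that the projection functors preserve bounded coherent complexes, with the genuinely nontrivial step being that the window extension of $F \in \DCoh(\cX^{\ss})$ lands in $\DCoh(\cX)$ rather than merely in $\APerf(\cX)$ --- is the natural approach and consistent with the framework the paper lays out.

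However, the two places where you invoke hypothesis \ref{eqn:obstruction_weights} are the two places it is not actually doing any work, which leaves a gap. First, perfectness and bounded tor-amplitude of $L_{\cS_\alpha/\cX}|_{\cZ_\alpha^{\ss}}$ and $L_{\cS_\alpha}|_{\cZ_\alpha^{\ss}}$ follow from quasi-smoothness of $\cX$ alone: by \autoref{lem:cotangent} they are the weight summands $L_\cX|_{\cZ}^{>0}$ and $L_\cX|_{\cZ}^{\leq 0}$ of $L_\cX|_{\cZ}$, which is perfect with tor-amplitude in $[-1,1]$, and direct summands inherit both properties --- no input from \ref{eqn:obstruction_weights} is needed. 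Second, the claim that the weight of the twist relating $i_\alpha^{\QC,!}$ to $i_\alpha^\ast$ coincides with $\eta_\alpha$ ``precisely under \ref{eqn:obstruction_weights}'' is not right: in \S 5 the paper \emph{defines} $\eta_\alpha$ to be the weight of $\det L_\alpha^+$ with $L_\alpha^+ = \radj{1}(L_\cX|_{\cZ_\alpha^{\ss}}) = L_{\cS_\alpha/\cX}[-1]|_{\cZ_\alpha^{\ss}}$, so this equality is definitional and unconditional. Since your sketch therefore never genuinely uses \ref{eqn:obstruction_weights}, yet the theorem carries it as a hypothesis for a reason (the paper points out that without the correct derived structure the weight positivity of $L_{\cS/\cX}|_{\cZ}$ can fail, and the obstruction piece is exactly what is left uncontrolled in the quasi-smooth setting), your argument for the main step cannot be complete as written. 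What you need to locate is precisely which boundedness degenerates when $H_1(L_\cX|_{\cZ_\alpha^{\ss}})$ has negative weights --- for instance whether the baric truncation functors of \autoref{lem:baric_decomp2} can fail to preserve $\DCoh_{\cS_\alpha}(\cX)$, so that the iterated window correction producing $\tilde F$ acquires homology in unboundedly many degrees --- and then show how \ref{eqn:obstruction_weights} excludes that failure. As it stands you have correctly identified the crux and honestly flagged it as technical, but you have not isolated where the hypothesis bites.
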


If $\cX$ is a derived stack such that $L_\cX \simeq (L_\cX)^\dual$, then this implies that any $\Theta$-stratification satisfies the property \ref{eqn:obstruction_weights} and hence \autoref{thm:quasi_smooth_case} applies to $\cX$. In particular \autoref{thm:quasi_smooth_case} applies to any $\Theta$-stratification of a $0$-shifted derived algebraic symplectic stack in the sense of \cite{pantev2013shifted}. Simple examples of $0$-shifted derived symplectic algebraic stacks arise as the derived Marsden-Weinstein quotient of an algebraic symplectic variety by a hamiltonian action of a reductive group \cite{pecharich2012derived}. Another important class of $0$-shifted derived symplectic stacks are the moduli stacks $\cX = \stackCoh(S)_v^{H-\ss}$ for a $K3$ surface $S$ \cite{pantev2013shifted}. The equivalence $L_\cX = (L_\cX)^\dual$ comes from the Serre duality equivalence $\RHom_S(E,E[1]) \simeq \RHom_S(E[1],E[2])^\dual \simeq \RHom_S(E,E[1])^\dual$ for $E \in \Coh(S)$, or more precisely from a version of this equivalence in families.


\section{Applications of derived Kirwan surjectivity}

Our first application of derived Kirwan surjectivity is an analog of \autoref{thm:Kirwan_surjectivity} for Borel-Moore homology and the Borel-Moore Poincare polynomial $P_t^{BM}(\cX) := \sum_{i\in \bZ} t^i \dim H^{BM}_i(\cX)$.

\begin{cor}\label{cor:BM_surjectivity}
Let $\cX$ be a quasi-smooth derived quotient stack with a $\Theta$-stratification satisfying \ref{eqn:obstruction_weights}. Then one has a direct sum decomposition
\[
H^{BM}_\ast(\cX;\bQ) \simeq H^{BM}_\ast(\cX^{\ss};\bQ) \oplus \bigoplus_\alpha H^{BM}_{\ast-2c_\alpha}(\cZ_\alpha^{\ss};\bQ)
\]
where $c_\alpha = \op{rank}(L_\cX|_{\cZ_\alpha^{\ss}}^{<0})$ and hence $P_t^{BM}(\cX) = P_t^{BM}(\cX^{\ss}) + \sum_\alpha t^{2 c_\alpha} P_t^{BM}(\cZ_\alpha^{\ss})$.
\end{cor}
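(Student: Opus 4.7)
The overall strategy is to decategorify the semiorthogonal decomposition of \autoref{thm:quasi_smooth_case} via topological $K$-theory and the Chern character, then apply a Thom isomorphism on Borel-Moore homology to produce the degree shift by $2c_\alpha$.

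Concretely, I would apply \autoref{thm:quasi_smooth_case} one stratum at a time: for each level $\cX_{\leq c_\alpha} = \cX_{<c_\alpha} \sqcup \cS_\alpha$ of the stratification, the theorem gives a semiorthogonal decomposition of $\DCoh(\cX_{\leq c_\alpha})$ in which $\DCoh(\cX_{<c_\alpha})$ is the middle piece and $\DCoh_{\cS_\alpha}(\cX_{\leq c_\alpha})$ is split into two baric halves. Because topological $K$-theory is an additive (localizing) invariant of dg-categories, it turns this decomposition into a direct sum; invoking \autoref{thm:topological2} to identify $K^{top}(\DCoh(\cY)) \simeq K^{BM}_{G_c}(\cY)$ produces a canonical splitting of the localization triangle
\[
K^{BM}_{G_c}(\cS_\alpha) \to K^{BM}_{G_c}(\cX_{\leq c_\alpha}) \to K^{BM}_{G_c}(\cX_{<c_\alpha}).
\]
Iterating over strata (using local finiteness of the stratification) yields $K^{BM}_{G_c}(\cX) \simeq K^{BM}_{G_c}(\cX^{\ss}) \oplus \bigoplus_\alpha K^{BM}_{G_c}(\cS_\alpha)$.

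Next I would apply the rational Chern character to each summand. Because the K-theoretic splittings come from categorical sections (hence are natural), they commute with the Chern characters and with the Borel-Moore localization long exact sequences; combined with the finite-dimensionality of $H^{BM}_j$ in each degree, this produces a degree-wise splitting $H^{BM}_i(\cX;\bQ) \simeq H^{BM}_i(\cX^{\ss};\bQ) \oplus \bigoplus_\alpha H^{BM}_i(\cS_\alpha;\bQ)$. Finally, \autoref{lem:cotangent} identifies $L_{\cS_\alpha/\cZ_\alpha^{\ss}}|_{\cZ_\alpha^{\ss}}$ with $L_\cX|_{\cZ_\alpha^{\ss}}^{<0}$, exhibiting the evaluation $\ev_0 : \cS_\alpha \to \cZ_\alpha^{\ss}$ as a virtual affine bundle of rank $c_\alpha$; the Thom isomorphism for this virtual bundle gives $H^{BM}_i(\cS_\alpha;\bQ) \simeq H^{BM}_{i-2c_\alpha}(\cZ_\alpha^{\ss};\bQ)$, which substituted into the previous decomposition yields the corollary.

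The main obstacle is the passage from the K-theoretic splitting to a degree-wise splitting on Borel-Moore homology: the Chern character of \autoref{thm:topological2} identifies $K^{BM}_{G_c,i}(\cX;\bQ)^\wedge$ with the $2$-periodized product $\prod_n H^{BM}_{G_c,i+2n}(\cX;\bQ)$ rather than with $H^{BM}$ in a single degree, so one must use that the categorical splittings respect the Atiyah-Segal/Chern filtration in order to extract the degree-by-degree statement (or equivalently, argue directly from the functoriality of the Chern character over the BM localization sequence). A secondary subtlety is verifying the Thom isomorphism for the virtual affine bundle $\ev_0 : \cS_\alpha \to \cZ_\alpha^{\ss}$ in the quasi-smooth derived setting, where the hypothesis \ref{eqn:obstruction_weights} is what guarantees that the relative cotangent complex is a genuine virtual vector bundle of the expected rank.
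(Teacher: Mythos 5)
Your proposal assembles exactly the paper's key ingredients (\autoref{thm:quasi_smooth_case}, \autoref{thm:topological2}, the Atiyah--Segal completion theorem), but two of the steps you flag as "subtleties" are in fact where the paper's proof diverges from yours, and both deserve a closer look.

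First, the passage from a K-theoretic categorical splitting to a degree-wise splitting of Borel--Moore homology is harder than ``naturality of the Chern character'' suggests, because the Chern character only computes the $2$-periodized product $\prod_n H^{BM}_{i+2n}$, and the categorical section $\APerf(\cX^{\ss}) \to \APerf(\cX)$ has no reason to induce a map in $K^{BM}$ that respects the degree filtration. The paper avoids this issue entirely: it uses the semiorthogonal decomposition only to conclude that the \emph{restriction} map $K^{BM}_{G_c,i}(X) \to K^{BM}_{G_c,i}(X^{\ss})$ is surjective, then passes to $H^{BM}$ via Atiyah--Segal (surjectivity of a product of degree-preserving maps implies surjectivity in each degree), so the localization long exact sequence collapses to short exact sequences of $\bQ$-vector spaces which then split abstractly and non-canonically. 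So rather than promoting the categorical splitting through the Chern character, you should only promote the \emph{surjectivity}.

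Second, and more seriously, invoking ``the Thom isomorphism for the virtual affine bundle $\pi: \cS_\alpha \to \cZ_\alpha^{\ss}$'' as an off-the-shelf fact is a genuine gap. In the quasi-smooth derived setting $\pi$ is only relatively quasi-smooth of virtual relative dimension $c_\alpha$; it is not a vector bundle and its fibers are derived affine schemes, so the classical Thom isomorphism in Borel--Moore homology does not directly apply. The paper instead proves the identification $H^{BM}_{i+2c_\alpha}(\cS_\alpha) \simeq H^{BM}_i(\cZ_\alpha^{\ss})$ by showing ($\text{``a bit of work''}$) that $\pi$ induces a pullback on Borel--Moore homology compatible with $\pi^\ast$ on $\DCoh$ under the isomorphism of \autoref{thm:topological2}, and then deducing the degree-shift isomorphism from the infinite semiorthogonal (baric) decomposition of $\DCoh(\cS_\alpha)$ into copies of $\DCoh(\cZ_\alpha^{\ss})^v$ furnished by \autoref{thm:quasi_smooth_case}. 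That categorical decomposition is what plays the role of the Thom isomorphism; without it, your argument has no source for the $2c_\alpha$ shift.
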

\begin{proof}[Sketch of proof]
It suffices to verify the claim in the case of a single closed $\Theta$-stratum. The fact that the restriction functor $K^{BM}_{G_c,i}(X) \to K^{BM}_{G_c,i}(X^{\ss})$ is surjective for all $i$ follows immediately from \autoref{thm:quasi_smooth_case} and \autoref{thm:topological2}. The Atiyah-Segal completion theorem then implies that the restriction map $H^{BM}_{G_c,i}(X) \to H^{BM}_{G_c,i}(X^{\ss})$ is surjective for all $i$. It follows that the long exact ``localization" sequence in Borel-Moore homology becomes a short exact sequence for all $i$
\[
0 \to H^{BM}_i(S/G) \to H^{BM}_i(X/G) \to H^{BM}_i(X^{\ss}/G) \to 0,
\]
which split non-canonically.

So to prove the theorem it suffices to show that $H^{BM}_{\ast+c}(S/G) = H^{BM}_\ast(\cZ^{\ss})$ where $c = \op{rank}(L_\cX|_{\cZ^{\ss}}^{<0})$. The map of derived stack $\pi : \cS \to \cZ^{\ss}$ is itself relatively quasi-smooth of relative virtual dimension $c$, hence of finite tor-amplitude. A bit of work is required to show that $\pi$ induces a pullback map in Borel-Moore homology $\pi^\ast : H^{BM}_i(\cZ^{\ss}) \to H^{BM}_i(\cS)$ which is compatible with the pullback functor $\pi^\ast : \DCoh(\cZ^{\ss}) \to \DCoh(\cS)$ under the isomorphism of \autoref{thm:topological2} and the Atiyah-Segal completion theorem. Once that has been established, however, the infinite semiorthogonal decomposition of $\DCoh(\cS)$ of \autoref{thm:quasi_smooth_case} implies that $H^{BM}_{i+2c_\alpha} (\cS) \simeq H^{BM}_i(\cZ^{\ss})$ under this pullback map.
\end{proof}

\begin{rem}
These topological results are similar in spirit to those of \cite{kirwan1987rational}, where Kirwan proves that a version of \autoref{thm:Kirwan_surjectivity} holds for the intersection cohomology of singular $G$-varieties.
\end{rem}

\subsection{Topology of the moduli stack of sheaves on a $K3$ surface.}

Let $X$ be a smooth $K3$-surface. Then for Hilbert polynomials $P$ corresponding to ``primitive" numerical $K$-theory classes and for an ample class $H \in NS(X)_\bR$ avoiding a locally finite set of real codimension 1 ``walls", the stack $\stackCoh(X)_P^{H-\ss}$ is actually representable by a smooth hyperk\"{a}hler variety \cite{huybrechts2010geometry}*{\S4.C}.\footnote{Technically moduli functor as we have defined it is a $\bG_m$-gerbe over a hyperk\"ahler manifold, but that will not change any of the statements we make.} Assuming certain bounds on the numerical $K$-theory class, these varieties will be birational to each other, and so it follows that their Betti numbers agree \cite{batyrev1999birational}. In fact we show that if one uses Borel-Moore homology of the stack, then a statement of this form continues to hold.

\begin{thm} \label{thm:poincare}
For any two generic (i.e. avoiding an explicit locally finite collection of real hyperplanes) ample classes $H,H' \in NS(X)_\bR$ we have an equality of Borel-Moore Poincare polynomials $P_t^{BM}(\stackCoh(X)^{H-\ss}_P) = P_t^{BM}(\stackCoh(X)^{H'-\ss}_P)$.
\end{thm}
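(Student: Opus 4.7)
The plan is to apply the Borel-Moore Kirwan surjectivity of \autoref{cor:BM_surjectivity} to the quasi-smooth derived stack $\cX = \stackCoh(X)_P$ equipped with the $\Theta$-stratification coming from $H$-slope semistability, and then to compare the resulting identities for two different generic ample classes $H$ and $H'$. The hypothesis \ref{eqn:obstruction_weights} holds automatically here because $X$ is a K3 surface: Serre duality together with $\omega_X \simeq \cO_X$ gives $L_\cX \simeq L_\cX^\vee$ at every point of $\stackCoh(X)$, as noted in the discussion following \autoref{thm:quasi_smooth_case}, so every $\Theta$-stratification of $\cX$ automatically satisfies the required weight-positivity on obstructions.

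Fixing a generic $H$, \autoref{cor:BM_surjectivity} together with the modular interpretation of HN strata in \autoref{thm:HN} yields, one sub-level $\cX_{\leq c}$ at a time and then passing to a power series limit, a master identity
\[
P_t^{BM}(\stackCoh(X)_P) \;=\; P_t^{BM}(\stackCoh(X)_P^{H-\ss}) \;+\; \sum_\alpha t^{2 c_\alpha}\, P_t^{BM}(\cZ^{\ss}_\alpha),
\]
where $\alpha = (P_0,\ldots,P_N)$ runs over HN types for $H$, each center factors as $\cZ^{\ss}_\alpha \simeq \stackCoh(X)^{H-\ss}_{P_0} \times \cdots \times \stackCoh(X)^{H-\ss}_{P_N}$, and the codimension $c_\alpha = \op{rank}(L_\cX|_{\cZ^{\ss}_\alpha}^{<0})$ is computed from weight decompositions of the Ext-pairings between the factors, and so depends only on the tuple of numerical classes, not on $H$. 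The left-hand side is manifestly independent of $H$.

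The next step is to induct on a numerical complexity of the class $v = [P]$ (for example, lexicographic order on $(\op{rk},\deg,\chi)$) chosen so that every summand $v_i$ of a nontrivial HN decomposition is strictly smaller than $v$. The inductive hypothesis is that $P_t^{BM}(\stackCoh(X)^{H-\ss}_{v'})$ is independent of generic $H$ for all $v' < v$. Under this hypothesis, every factor appearing in each $P_t^{BM}(\cZ^{\ss}_\alpha)$ is $H$-independent. Solving the master identity for $P_t^{BM}(\stackCoh(X)_P^{H-\ss})$ then reduces the theorem to showing that the remaining sum over HN strata is itself independent of generic $H$.

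The hard part is precisely this last statement, because the set of HN types is $H$-dependent: the ordered decomposition $v = v_0 + \cdots + v_N$ is an HN type for $H$ exactly when $\nu_H(v_0) < \cdots < \nu_H(v_N)$, and different generic chambers of ample classes produce different such chains. My plan is to reduce to invariance across a single generic codimension-one wall in $NS(X)_\bR$ (using local finiteness of the wall-and-chamber structure), and then to recursively substitute the master identity into each factor so that the HN sum is rewritten as a sum over all ordered decompositions of $v$ weighted by coefficients encoding the total slope order determined by $H$. At the wall, only HN types that merge or split two consecutive terms $v_i,v_{i+1}$ (those with $\nu_H(v_i) = \nu_H(v_{i+1})$ on the wall) change; the symmetry of the Euler form on a K3 surface, $\chi(v_i, v_{i+1}) = \chi(v_{i+1}, v_i)$, coming again from $\omega_X \simeq \cO_X$, forces the contributions from the ``before-wall'' and ``after-wall'' HN chains to cancel pairwise. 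This is the Poincare-polynomial shadow of Joyce's motivic wall-crossing formula in the Hall algebra of $\Coh(X)$, and it is the main technical obstacle; carrying it out rigorously in the Borel-Moore power-series setting, while attending to the locally-finite-type nature of $\stackCoh(X)_P$, completes the proof.
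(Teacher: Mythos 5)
Your argument diverges from the paper's, and there are two real issues: one a gap in well-definedness, one a significant over-complication that stems from missing the key structural observation.

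First, you apply \autoref{cor:BM_surjectivity} to the full locally-finite-type stack $\stackCoh(X)_P$ and treat $P_t^{BM}(\stackCoh(X)_P)$ as a meaningful power series that is ``manifestly independent of $H$.'' But $\stackCoh(X)_P$ is unbounded (not finite type), the number of strata is infinite, and in each homological degree the Borel--Moore homology need not be finite dimensional, so this master identity is not a well-defined identity of power series; the phrase ``one sub-level $\cX_{\leq c}$ at a time and then passing to a power series limit'' does not resolve this, because the sub-levels themselves depend on $H$ and one then has to control the limit uniformly across chambers. The paper sidesteps the problem entirely by never integrating against the full stack: it picks the polarization $H_0$ \emph{on} the wall separating $H_-$ and $H_+$ and works inside the finite-type open substack $\stackCoh(X)_P^{H_0-\ss}$, which contains both $\stackCoh(X)_P^{H_\pm-\ss}$ as open substacks, with complement a finite union of $\Theta$-strata. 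All Poincar\'e polynomials appearing are then honest polynomials (or at least power series of finite-type stacks) and the identity
\[
P_t(\stackCoh(X)_P^{H_- -\ss}) + \sum_\alpha t^{c_\alpha} P_t(\stackCoh(X)_\alpha^{H_- -\ss}) = P_t(\stackCoh(X)_P^{H_0 -\ss}) = P_t(\stackCoh(X)_P^{H_+ -\ss}) + \sum_\alpha t^{c_\alpha} P_t(\stackCoh(X)_\alpha^{H_+ -\ss})
\]
has only finitely many terms.

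Second, you treat the invariance of the HN-sum under wall crossing as ``the main technical obstacle'' and propose resolving it via a Joyce-style Hall-algebra cancellation using symmetry of the Euler form. This greatly over-complicates matters. The paper's key observation is that inside $\stackCoh(X)_P^{H_0-\ss}$ the \emph{sets of HN types} appearing in the $H_-$- and $H_+$-stratifications coincide, so the two sums in the displayed identity are indexed by the same finite set $\alpha = (P_0,\ldots,P_N)$ with the same codimensions $c_\alpha$. Once the inductive hypothesis (on the Hilbert polynomials $P_0,\ldots,P_N$, which are all ``smaller'' than $P$) gives $P_t(\stackCoh(X)_\alpha^{H_--\ss}) = P_t(\stackCoh(X)_\alpha^{H_+-\ss})$, the two sums cancel term by term and the theorem follows immediately. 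No motivic wall-crossing machinery, and no appeal to $\chi(v_i,v_{i+1}) = \chi(v_{i+1},v_i)$, is needed. You should establish the matching-of-HN-types claim directly (it follows from the fact that the decomposition $P = P_0 + \cdots + P_N$ with $\nu_{H_0}(P_0) \leq \cdots \leq \nu_{H_0}(P_N)$ and the strict inequalities only violated on the wall determines the same set of ordered refinements on either side of a generic wall) rather than invoking a more powerful cancellation theorem whose hypotheses you have not verified in the Borel--Moore setting.
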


This is proved by analyzing the $\Theta$-stratification of $\stackCoh(X)_P$ as $H$ varies. In particular, say $H_+$ and $H_-$ lie on either side of a wall in $NS(X)_\bR$, and let $H_0 \in NS(X)_\bR$ be the point on the line segment joining $H_\pm$ which lies on this wall. Then $\stackCoh(X)_P^{H_\pm -\ss} \subset \stackCoh(X)_P^{H_0 -\ss}$, and the complement of this open substack is a union of Harder-Narasimhan strata with respect to $H_\pm$:
\begin{equation}\label{eqn:wall_crossing}
\stackCoh(X)_P^{H_- -\ss} \cup \bigcup_\alpha \cS_\alpha^{H_-} = \stackCoh(X)_P^{H_0 -\ss} = \stackCoh(X)_P^{H_+ -\ss} \cup \bigcup_\alpha \cS_\alpha^{H_+}
\end{equation}
It turns out that the set of Harder-Narasimhan types appearing in either $\Theta$-stratification are the same, so \autoref{cor:BM_surjectivity} implies an equality
\[
P_t(\stackCoh(X)_P^{H_- -\ss}) + \sum_\alpha t^{c_\alpha} P_t(\stackCoh(X)_\alpha^{H_- -\ss}) = P_t(\stackCoh(X)_P^{H_+ -\ss}) + \sum_\alpha t^{c_\alpha} P_t(\stackCoh(X)_\alpha^{H_+ -\ss}) ,
\]
where we are using the notation $\stackCoh(X)_\alpha^{H-\ss} = \stackCoh(X)_{P_0}^{H-\ss} \times \cdots \times \stackCoh(X)_{P_N}^{H-\ss}$ for a Harder-Narasimhan type $\alpha = (P_0,\ldots,P_N)$. This identity provides the basis for an inductive proof of the theorem.

\subsubsection{Speculations on the interaction with Hodge theory}

The Borel-Moore homology carries a mixed Hodge structure, and the fact that the restriction map of non-commutative motives ``with compact support" $[\DCoh(\cX)] \to [\DCoh(\cX^{\ss})]$ splits suggests that the restriction map $H^{BM}_i(\cX) \to H^{BM}_i(\cX^{\ss})$ splits as a map of mixed Hodge structures, so that the direct sum decomposition of \autoref{cor:BM_surjectivity} can be chosen compatibly with the Hodge structures. One might even suspect in this context that the restriction map of the motive with compact support $M^c(\cX) \to M^c(\cX^{\ss})$ in Voevodsky's big triangulated category of motives $DM(\bC,\bQ)$ (as constructed for quotient stacks in \cite{totaro2014motive}) splits. We have neither a proof, nor a counterexample.

On the other hand, one can imagine a different proof of \autoref{thm:poincare} along Hodge-theoretic lines. The virtual Hodge polynomial $E_{x,y}(\cX)$ of a stack \cite{joyce2007motivic} is additive for any stratification of $\cX$, so except for the comparison of $E_{x,y}(\cZ_\alpha^{\ss})$ with $E_{x,y}(\cS_\alpha)$, a version of \autoref{cor:BM_surjectivity} would hold automatically for the virtual Hodge polynomial. If the Hodge structure on $H^{BM}_i(\cX)$ is pure, then one can recover $P_t^{BM}(\cX)$ from $E_{x,y}(\cX)$, which leads us to
\begin{conj} \label{conj:purity}
The Hodge structure on $H_i^{BM}(\stackCoh(X)_P^{H-\ss})$ is pure for any $H$, and in general, $H_i^{BM}(\cX)$ is pure for any algebraic-symplectic (derived) stack $\cX$ which has a proper good moduli space. More ambitiously, one can ask if $H^i_{BM}(\cX)$ is pure for any algebraic-symplectic derived stack $\cX$ which is cohomologically proper in the sense of \cite{halpern2014mapping}.
\end{conj}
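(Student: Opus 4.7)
The plan is to attack \autoref{conj:purity} in the three regimes it describes, using \autoref{cor:BM_surjectivity} and the wall-crossing of \autoref{thm:poincare} as the main engine in the K3 case, and exploiting the self-duality $L_\cX \simeq (L_\cX)^\vee$ coming from the $0$-shifted symplectic structure in the two more general cases.

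For $\cX = \stackCoh(X)_P^{H-\ss}$ on a K3 surface, I would argue by induction on the numerical type of $P$. The base case is sufficiently generic $(P,H)$, where $\cX$ is a $\bG_m$-gerbe over a smooth projective hyperk\"{a}hler variety and purity of $H^{BM}(\cX;\bQ)$ reduces via the Leray spectral sequence to the classical compact K\"{a}hler case. For the inductive step, given any $H$ one joins it to a generic $H_+$ by a path in $NS(X)_\bR$ and crosses the walls one at a time. Across a wall as in \eqref{eqn:wall_crossing}, the strata $\cS_\alpha^{H_\pm}$ fiber over products $\stackCoh(X)_{P_0}^{H_0-\ss} \times \cdots \times \stackCoh(X)_{P_N}^{H_0-\ss}$ with strictly smaller $P_i$, hence have pure Borel-Moore homology by induction, and \autoref{cor:BM_surjectivity} reduces purity of $H^{BM}(\stackCoh(X)_P^{H_0-\ss})$, and thence of the other side of the wall, to purity on the generic side. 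The whole induction closes provided the direct sum decomposition of \autoref{cor:BM_surjectivity} is compatible with mixed Hodge structures.

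The main obstacle is precisely this Hodge-theoretic refinement of \autoref{cor:BM_surjectivity}. Following the author's own suggestion, I would try to lift the semiorthogonal decomposition of \autoref{thm:quasi_smooth_case} to a splitting in Voevodsky's category of motives with compact support $DM(\bC,\bQ)$, extending the assignment $\cX \mapsto M^c(\cX)$ from \cite{totaro2014motive}. The key point is that the baric pieces in \autoref{lem:baric_decomp2} arise via $(i_\alpha)_\ast \pi^\ast$ applied to weight eigenspaces on $\cZ_\alpha$, operations one can hope to categorify motivically; a successful lift would induce the Hodge-theoretic splitting after realization under $M^c \mapsto H^{BM}_\ast$.

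For the symplectic case with a proper good moduli space $q : \cX \to X$, the self-duality $L_\cX \simeq (L_\cX)^\vee$ gives $\cX$ a Poincare-type duality, while properness and cohomological affineness of $q$ make $H^{BM}(\cX)$ computable from $H^{BM}(X)$. Since $X$ is expected to inherit a singular symplectic structure, Kaledin's purity theorem for symplectic singularities should close the argument, provided one first verifies that the good moduli space of a $0$-shifted symplectic stack is a symplectic variety in Beauville's sense. The fully general cohomologically proper case is the most speculative: even constructing a mixed Hodge structure on $H^{BM}(\cX)$ requires new work, since the standard Deligne constructions via simplicial compactifications are unavailable. I would try to produce such an MHS directly from the noncommutative Hodge-de Rham spectral sequence via \autoref{thm:topological} and \autoref{thm:topological2}, together with the speculative ``stacky motives'' envisaged in the Hodge-structure footnote; proving Hodge-de Rham degeneration for general cohomologically proper stacks looks to me like the hardest input, since the standard arguments (Deligne-Illusie, Kaledin) rely on Frobenius-lifting hypotheses not obviously available in this generality.
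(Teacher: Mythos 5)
This statement is a \emph{conjecture}, not a theorem: the paper offers no proof of it, and in fact explicitly states, about the key ingredient your program relies on (the splitting of $M^c(\cX)\to M^c(\cX^{\ss})$ in $DM(\bC,\bQ)$), that ``\emph{We have neither a proof, nor a counterexample}.'' So there is nothing in the paper to compare against; what you have written is a research outline, and you should present it as such rather than as a proof.

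That said, your outline tracks the paper's own speculations closely and in the right order. The K3 wall-crossing induction with base case a $\bG_m$-gerbe over a projective hyperk\"{a}hler variety, and the proposed motivic lift of the semiorthogonal decomposition to $DM(\bC,\bQ)$ using \cite{totaro2014motive}, is precisely the route the paper floats immediately before stating \autoref{conj:purity}. You have correctly identified the two genuine gaps: (1) \autoref{cor:BM_surjectivity} only produces a non-canonical \emph{abstract} direct sum decomposition of $\bQ$-vector spaces, and the paper nowhere shows (or claims) that this splitting can be chosen as a map of mixed Hodge structures --- that is exactly what the motivic lift is supposed to supply; (2) the inductive step requires not only MHS-compatibility but also purity of $H^{BM}_\ast(\cZ_\alpha^{\ss})$, which is not the same as purity of the semistable loci for smaller Hilbert polynomials (the centers live in $\iMap(B\bG_m,\cX)$ and carry a nontrivial derived structure), so you should spell out why the induction actually closes on the strata. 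Beyond the K3 case your proposal introduces further unverified hypotheses the paper does not touch: that the good moduli space of a $0$-shifted symplectic derived stack is a symplectic variety in Beauville's sense (this is open in general, and Kaledin's theorem only applies once you have it), and that a mixed Hodge structure exists at all on $H^{BM}$ of a cohomologically proper stack. These are substantive inputs, not bookkeeping, and each would need to be proved before the outline could be upgraded to a proof. In short: your route is the one the paper gestures at, but you should be explicit that every load-bearing step is currently open.
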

This conjecture is also inspired by the work of Ben Davison, who shows that the moduli of representations of the pre-projective algebra of any quiver has pure Borel-Moore homology in \cite{davison2016integrality}. In fact, he has an entirely different approach to proving results akin to \autoref{cor:BM_surjectivity} using the theory of mixed Hodge modules and purity theorems.

\subsection{Other applications of \autoref{thm:derived_Kirwan}.}

One of the largest open problems in the theory of derived categories of coherent sheaves is the conjecture that any two birational Calabi-Yau manifolds have equivalent derived categories of coherent sheaves, and there is a long history of results by Bondal, Orlov, Kawamata, Bridgeland, Bezrukavnikov, Kaledin and others constructing equivalences in larger and larger classes of examples. One of the main applications of \autoref{thm:categorical_Kirwan} is to establishing derived equivalences for flops between CY manifolds constructed explicitly via variation of GIT quotient. For instance, derived equivalences for the simplest kind of variation of GIT quotient were constructed in \cite{halpern2015derived} and \cite{ballard2012variation}. There has been a recent breakthrough in \cite{halpern2016combinatorial}, where derived equivalences for a much larger class of variation of GIT quotient (and hyperk\"ahler quotient) are established. The GIT problems studied in \cite{halpern2016combinatorial} serve as \'etale local models for the wall crossing \eqref{eqn:wall_crossing}, and in \cite{halpern2016derived} we will use these methods to prove that if $H,H\in NS(X)_\bR$ are two generic ample classes and $P$ is a primitive Hilbert polynomial, then $\DCoh(\stackCoh(X)_P^{H-\ss}) \simeq \DCoh(\stackCoh(X)_P^{H'-\ss})$.

\section{Non-abelian virtual localization theorem}

In addition to the applications we have discussed, one can use the notion of derived $\Theta$-stratifications to prove a ``virtual non-abelian localization" formula in $K$-theory.

\begin{defn}
A complex $F \in \Perf(\cX)$ is said to be \emph{integrable} if $\bigoplus_i H_i R\Gamma(\cX,F \otimes G)$ is finite dimensional for any $G \in \DCoh(\cX)$.
\end{defn}
The subcategory of integrable complexes is a stable idempotent complete $\otimes$-ideal of $\Perf(\cX)$, whose $K$-theory is a somewhat adhoc version of algebraic $K$-theory ``with compact supports."

\begin{ex}
If $\cX$ has a good moduli space $\cX \to Y$ such that $Y$ is proper -- for instance the map $X/G \to X/\!/G$ where $X$ is a $G$-quasi-projective-scheme which admits a projective good quotient $X/\!/G$ -- then any $F \in \Perf(\cX)$ is integrable. The prototypical example is the moduli stack of semistable principal $G$-bundles on a smooth curve.
\end{ex}

\begin{ex}
More generally if $F \in \Perf(\cX)$ is set theoretically supported on a closed substack $\cY \subset \cX$ which admits a proper good moduli space $\cY \to Y$ then $F$ is integrable. Even more generally, $F \in \Perf(\cX)$ is integrable whenever the support of $F$ is cohomologically proper in the sense of \cite{halpern2014mapping}.
\end{ex}

For an integrable complex $F \in \Perf(\cX)$,\footnote{Here if $\cX$ is derived, then we assume that $H_i(\cO_\cX) = 0$ for $i\gg0$ so that $\cO_\cX \in \DCoh$.} one can define the $K$-theoretic ``index" to be $\chi(\cX,F)$, which is the $K$-theoretic analog of the integral of a compactly supported cohomology class. In fact when $\cX = X$ is a scheme the two notions of integration are directly related via the Grothendieck-Riemann-Roch theorem. So we see that integration in $K$-theory, as opposed to integration in cohomology, generalizes easily from schemes to stacks.

When $\cX$ is quasi-smooth derived stack, the index $\chi(\cX,F)$ of an admissible $F \in \Perf(\cX)$ is analogous to the integral of a compactly supported cohomology class on $\cX^{cl}$ with respect to a \emph{virtual fundamental class}. To make this precise, consider the surjective closed immersion $\iota : \cX^{cl} \hookrightarrow \cX$. In $K_0(\DCoh(\cX))$ we have $[\cO_\cX] = \sum_i (-1)^i [H_i(\cO_\cX)]$. The objects $H_i(\cO_\cX)$ are canonically $\iota_\ast$ of coherent sheaves on $\cX^{cl}$, which we denote $H_i(\cO_\cX)$ as well, and we introduce the \emph{virtual structure sheaf} $\cO_{\cX}^{vir} =\bigoplus_i H_i(\cO_\cX)[i] \in \DCoh(\cX^{cl})$. By the projection formula we have
\[
\chi(\cX,F) = \chi(\cX, F \otimes \iota_\ast(\bigoplus H_i(\cO_\cX)[i])) = \chi(\cX^{cl}, \cO_{\cX}^{vir} \otimes \iota^\ast F).
\]
Virtual integrals of this form, along with their cohomological counterparts, play a central role in modern enumerative geometry.

Given a $\Theta$-stratification of a quasi-smooth derived stack $\cX = \cX^{\ss} \cup \bigcup_\alpha \cS_\alpha$, the virtual non-abelian localization formula relates the index of a $K$-theory class on $\cX$ to the index of its restriction to $\cX^{\ss}$ as well as $\cZ_\alpha^{\ss}$ for all $\alpha$. Let us define $L_\alpha^+ := \radj{1} (L_\cX|_{\cZ_\alpha^{\ss}}) \in \APerf(\cZ_\alpha^{\ss})^{\geq 1}$ and $L_\alpha^- := \ladj{0}(L_\cX|_{\cZ_\alpha^{\ss}}) \in \APerf(\cZ_\alpha^{\ss})^{<0}$. Because $\cX$ is quasi-smooth, $L_\cX$ is actually perfect, and hence so are $L_\alpha^+$ and $L_\alpha^-$. For each stratum, we define a complex\footnote{This complex $E_\alpha$ can be regarded as a $K$-theoretic reciprocal of the virtual normal bundles of $\cZ_\alpha$ in $\cX$.}
\[
E_\alpha = \op{Sym}(L_\alpha^- \oplus (L_\alpha^+)^\dual) \otimes (\det L_\alpha^+)^\dual [-\rank L_\alpha^+] \in \QC(\cZ_\alpha^{\ss}).
\]
Note that $L_\alpha^+[1]$ is $L_{\cS_\alpha / \cX}|_{\cZ_\alpha^{\ss}}$ once one equips $\cS_\alpha$ with the ``correct" derived structure discussed above. We also define the integer $\eta_\alpha$ to be the weight of $\det(L_\alpha^+)$.

\begin{defn} \label{defn:admissible}
We say that $F\in \Perf(\cX)$ is \emph{almost admissible} if $F|_{\cZ_\alpha^{\ss}} \in \Perf(\cZ_\alpha^{\ss})^{<\eta_\alpha}$ for all but finitely many $\alpha$.
\end{defn}

\begin{ex}
Consider the moduli stack $\cX = \op{Bun}_G(C)$ of principal $G$-bundles on a smooth curve $C$. Then $\cX$ is smooth, and the Shatz-Harder-Narasimhan stratification is a $\Theta$-stratification. In \cite{teleman2009index}, Teleman and Woodward define the subspace of ``admissible classes" in $K_0(\op{Bun}_G(C))$ as the span of some explicit complexes and prove formulas for their index, $\chi(\op{Bun}_G(C),F)$. These classes are almost admissible in the sense of \autoref{defn:admissible}.\footnote{The ``almost" in the terminology is to be consistent with \cite{halpern2016equivariant}. There it was convenient to define for an invertible sheaf $\cL$ on $\cX$ the class of $\cL$-admissible complexes to be those $F \in \Perf(\cX)$ for which $\cL \otimes F^{\otimes m}$ is almost admissible for all $m>0$. One can say that the Atiyah-Bott classes are $\cL$-admissible for any $\cL \in \op{Pic}(\op{Bun}_G(C))$ which has a positive level.}
\end{ex}

\begin{thm}[Virtual non-abelian localization] \label{thm:virtual_localization}
Let $\cX$ be a quasi-smooth derived stack with a $\Theta$-stratification $\cX = \cX^{\ss} \cup \bigcup_\alpha \cS_\alpha$. Let $F \in \Perf(\cX) \subset \DCoh(\cX)$ be an almost admissible complex such that $F|_{\cZ_\alpha}$ is integrable for all $\alpha$. Then $R\Gamma(\cX,F)$ is finite dimensional if and only if $R\Gamma(\cX^{\ss},F)$ is, and in this case we have
\[
\chi(\cX,F) = \chi(\cX^{\ss},F) + \sum_{\alpha} \chi(\cZ_\alpha^{\ss},F|_{\cZ_\alpha^{\ss}} \otimes E_\alpha).
\]
\end{thm}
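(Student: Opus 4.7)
I would proceed by induction on the $\Theta$-stratification, using the filtration $\{\cX_{\leq c}\}$ to reduce to the case of a single closed $\Theta$-stratum $\cS \hookrightarrow \cX$ with center $\cZ^{\ss}$ and $\cX^{\ss}$ the open complement; the general formula then follows by summing the contribution across $\alpha$. In the single-stratum setting I would apply \autoref{thm:quasi_smooth_case} with window weight $w = \eta$ (the weight of $\det L^+$ appearing in the definition of $E_\alpha$). The almost admissibility hypothesis, which states $F|_{\cZ^{\ss}} \in \Perf(\cZ^{\ss})^{<\eta}$, forces the ``upper'' piece of $F$ in the semiorthogonal decomposition
\[
\DCoh(\cX) = \sod{\DCoh_{\cX^{\us}}(\cX)^{<\eta}, \cG^\eta \cap \DCoh(\cX), \DCoh_{\cX^{\us}}(\cX)^{\geq \eta}}
\]
to vanish, giving an exact triangle $F_\cS \to F \to F_w$ in $\DCoh(\cX)$ with $F_\cS$ supported on $\cS$ and with weights $<\eta$ at the center, and with $F_w \in \cG^\eta$ satisfying $F_w|_{\cX^{\ss}} \simeq F|_{\cX^{\ss}}$ via the equivalence $\res : \cG^\eta \xrightarrow{\simeq} \DCoh(\cX^{\ss})$. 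Additivity of $\chi$ on triangles, together with the parallel decomposition of $R\Gamma(\cX,F)$, reduces both the finite-dimensionality equivalence and the formula to computing $\chi(\cX, F_w)$ and $\chi(\cX, F_\cS)$ separately.

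\textbf{Window piece.} For $\chi(\cX, F_w) = \chi(\cX^{\ss}, F|_{\cX^{\ss}})$ I would invoke the localization triangle $R\Gamma_{\cS}(\cX, F_w) \to R\Gamma(\cX, F_w) \to R\Gamma(\cX^{\ss}, F|_{\cX^{\ss}})$ and argue that the leftmost term vanishes. The key is that $R\Gamma_{\cS}$ is computed via the right adjoint $i^{\QC,!}$, and by the baric structure of \autoref{lem:baric_decomp2} the functor $i^{\QC,!}$ applied to $F_w$ lands in $\QC(\cS)^{\geq \eta}$. But an object in $\cG^\eta$ has trivial coupling with $\APerf_\cS(\cX)^{\geq \eta}$ by the semiorthogonality of the decomposition, so $R\Gamma_\cS(\cX, F_w) = 0$.

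\textbf{Stratum piece.} For $F_\cS \simeq i_\ast G$ with $G \in \DCoh(\cS)$, I would factor through $\pi : \cS \to \cZ^{\ss}$ (the canonical projection) to write $\chi(\cX, F_\cS) = \chi(\cS, G) = \chi(\cZ^{\ss}, R\pi_\ast G)$. The heart of the proof is a Koszul-type identification
\[
R\pi_\ast G \;\simeq\; F|_{\cZ^{\ss}} \otimes E_\alpha \qquad \text{in } K_0(\DCoh(\cZ^{\ss})).
\]
By \autoref{lem:cotangent} the derived structure on $\cS$ satisfies $L_{\cS/\cX}|_{\cZ^{\ss}} \simeq L^+_\alpha[1]$ and $L_\cS|_{\cZ^{\ss}}$ acquires $L^-_\alpha$ as its negative-weight summand, so $\pi$ is quasi-smooth and the Koszul resolution of $\cO_{\cZ^{\ss}}$ over $\cS$ expresses $R\pi_\ast$ in terms of $\op{Sym}(L^-_\alpha \oplus (L^+_\alpha)^\dual)$. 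Running the baric decomposition of \autoref{lem:baric_decomp} as a convergent filtration on $G$ and invoking relative Serre duality for the quasi-smooth $\pi$ produces the $(\det L^+_\alpha)^\dual[-\rank L^+_\alpha]$ twist, assembling the symmetric algebra into exactly $E_\alpha$. The integrability of $F|_{\cZ_\alpha}$ ensures that the formally infinite sum $\op{Sym}(L^-_\alpha)$ contributes a well-defined Euler characteristic.

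\textbf{Main obstacle.} The main difficulty is the Koszul identification in the stratum piece: the clean statement ``$R\pi_\ast G = F|_{\cZ^{\ss}} \otimes E_\alpha$'' requires reconciling two pieces of structure on $\DCoh(\cS)$ --- the baric decomposition from \autoref{lem:baric_decomp}, which reflects weights at $\cZ^{\ss}$, and the geometric structure of $\pi$ coming from the derived mapping stack realization of $\cS$. A secondary subtlety is that the semiorthogonal decomposition of $\DCoh_{\cX^{\us}}(\cX)^{<\eta}$ is infinite (indexed by $v < \eta$), and convergence of the corresponding character sum has to be controlled using the integrability hypothesis on $F|_{\cZ_\alpha}$; this is also what secures the first assertion of the theorem that $R\Gamma(\cX,F)$ is finite dimensional if and only if $R\Gamma(\cX^{\ss},F)$ is.
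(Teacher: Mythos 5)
The paper does not include a proof of \autoref{thm:virtual_localization}; it is cited from \cite{halpern2016equivariant}, so your proposal can only be judged on its internal plausibility given the surrounding machinery.

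Your high-level plan is sound, and you correctly flag the Koszul identification on the stratum as the genuine heart of the argument. However, the reduction you set up has a concrete gap that would sink the proof as written, and it stems from confusing the direction of the window. By definition, $F_w \in \cG^\eta$ means $i^\ast F_w \in \APerf(\cS)^{\geq \eta}$ (weights $\geq \eta$ at the center) and $i^{\QC,!} F_w \in \QC(\cS)^{<\eta}$. You assert the opposite in both places: you claim almost admissibility ($F|_{\cZ^{\ss}}$ having weights $<\eta$) forces the $\DCoh_{\cX^{\us}}(\cX)^{\geq \eta}$ piece to vanish and puts the remainder in $\cG^\eta$, but an object with $i^\ast$-weights $<\eta$ at the center cannot lie in $\cG^\eta$ at all, since that window is $[\eta, 2\eta)$; and in the window-piece step you claim $i^{\QC,!} F_w \in \QC(\cS)^{\geq \eta}$, whereas the definition of $\cG^\eta$ gives $i^{\QC,!} F_w \in \QC(\cS)^{<\eta}$. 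As a consequence, the two-term triangle $F_\cS \to F \to F_w$ you write down does not exist with the properties you need, and the vanishing $R\Gamma_\cS(\cX,F_w)=0$ is not justified (semiorthogonality controls $\RHom$ \emph{between} the SOD pieces, not $R\Gamma$ of a single piece).

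A cleaner route, consistent with the tools in the paper, avoids decomposing $F$ in the SOD altogether. For a single closed stratum the localization triangle gives $\chi(\cX,F) = \chi(\cX^{\ss},F|_{\cX^{\ss}}) + \chi(\cS, i^{\QC,!}F)$ with no hypothesis on $F$; the entire content of the theorem is the identity $\chi(\cS, i^{\QC,!}F) = \chi(\cZ^{\ss}, F|_{\cZ^{\ss}} \otimes E_\alpha)$. This is where the baric decomposition of $\DCoh(\cS)$ from \autoref{lem:baric_decomp}, the infinite SOD of \autoref{thm:quasi_smooth_case} identifying the pieces with $\DCoh(\cZ^{\ss})^v$, and \autoref{lem:cotangent} (giving $L_{\cS/\cX}|_{\cZ}[-1] \simeq L^+$ and the negative-weight tangent data for $\pi$) should enter. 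The $(\det L^+)^\dual[-\rank L^+]$ factor in $E_\alpha$ is exactly $\omega_{\cS/\cX}|_{\cZ^{\ss}}$, the discrepancy between $i^\ast$ and $i^{\QC,!}$; the $\op{Sym}(L^-)$ factor comes from $R\pi_\ast$ along the attracting directions; and $\op{Sym}((L^+)^\dual)$ is the $K$-theoretic inversion of the Koszul complex on the normal directions. The integrability and almost-admissibility hypotheses are then used exactly where you say — to control the convergence of the formally infinite sum over $v$ — but they do not enter the reduction to $\chi(\cS,i^{\QC,!}F)$, which is automatic. Re-orienting your argument around this localization triangle, and fixing the direction of the baric conditions on $\cG^w$, would turn your proposal into a viable proof sketch.
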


\begin{ex}[Abelian virtual localization]
Let $X$ be a quasi-smooth scheme with an action of a torus $T$. For any one parameter subgroup $\lambda : \bG_m \to T$, the Bialynicki-Birula stratification of $X$ can be interpreted as a $\Theta$-stratification of $X/\bG_m$. If we choose $\lambda$ to be suitably generic, then the centers of the strata will be the connected components $Z_\alpha$ of $X^T$. If we assume that the strata cover $X$, for instance if $X$ is proper, then \autoref{thm:virtual_localization} implies that
\[
\chi(X/\bG_m,F) = \sum (-1)^i \dim ((H_i R\Gamma(X,F))^{\bG_m}) = \sum_\alpha \chi(Z_\alpha, (F|_{Z_\alpha} \otimes E_\alpha)^{\bG_m}).
\]
This is comparable to the virtual localization formula in cohomology \cite{graber1999localization} via the equivariant Grothendieck-Riemann-Roch theorem.
\end{ex}

\begin{ex}[Equivariant Verlinde formula]
In \cite{halpern2016equivariant} we use \autoref{thm:virtual_localization} and \autoref{thm:derived_Kirwan} to establish a version of the Verlinde formula on the moduli stack of semistable $G$-principal Higgs bundles $\cH iggs_G^{\ss}$ on a fixed smooth curve $\Sigma$ over $\bC$. It states that for certain ``positive" line bundles $\cL$, the cohomology $H^i(\cH iggs_G^{\ss},\cL) = 0$ for $i>0$ and gives an explicit formula for the ``graded dimension"
\[
\dim_{\bC^\ast}(H^0(\mathcal{H}iggs_G^{\ss},\cL)) = \sum_n t^n \dim \left(H^0(\mathcal{H}iggs_G^{\ss},\cL)_{\op{weight } n}\right),
\]
where $H^0(\mathcal{H}iggs_G^{\ss},\cL)_{\op{weight } n}$ denotes the weight $n$ direct summand of $H^0(\mathcal{H}iggs_G^{\ss},\cL)$ with respect to the $\bG_m$ action on $\cH iggs_G^{\ss}$ which scales the Higgs field. The idea is to use previous results of Teleman and Woodward \cite{teleman2009index} to compute the $K$-theoretic graded index $\cL$ on the stack $\cH iggs$ itself, and prove that the cohomology vanishes there, then to use the methods of derived $\Theta$-stratifications discussed above to identify $R\Gamma(\cH iggs_{G}^{\ss},\cL)$ with $R\Gamma(\cH iggs_G,\cL)$.
\end{ex}

\begin{ex}[Wall-crossing]
Another thing that \autoref{thm:virtual_localization} is well suited for is comparing the index of tautological classes on $\cX^{\ss}$ as one varies the notion of stability. For instance let $X$ be a projective $K3$-surface and consider a variation of polarization $H \in NS(X)_\bR$. For any class in $K^0(\cX)$ represented by some $F \in \Perf(\stackCoh(X)_P)$, one can apply \autoref{thm:virtual_localization} to the two stratifications of \autoref{eqn:wall_crossing} to obtain a wall-crossing formula
\begin{multline}
\chi(\stackCoh(X)^{H_- \ndash\ss}_P,F) - \chi(\stackCoh(X)^{H_+ \ndash \ss}_P,F) = \\ \sum_\alpha \left( \chi(\stackCoh(X)^{H_+ \ndash\ss}_\alpha,F \otimes E_\alpha^{H_+}) - \chi(\stackCoh(X)^{H_- \ndash\ss}_\alpha,F \otimes E_\alpha^{H_-}) \right)
\end{multline}
\end{ex}

\bibliography{theta_stratifications}{}
\bibliographystyle{plain}

\end{document}